\numberwithin{equation}{section}
\newcommand{\R}{\mathbb{R}}
\newcommand{\C}{\mathbb{C}}
\newcommand{\SL}{\mr{SL}}
\newcommand{\SU}{\mr{SU}}
\newcommand{\Sp}{\mr{Sp}}
\newcommand{\su}{\mathfrak{su}}
\newcommand{\mf}{\mathfrak}
\newcommand{\mr}{\mathrm}
\newcommand{\mc}{\mathcal}
\newcommand{\End}{\mathop{\rm End}\nolimits}
\renewcommand{\ker}{\mathop{\rm ker}\nolimits}
\newcommand{\im}{\mathop{\rm im}\nolimits}
\renewcommand{\Re}{\mathop{\rm Re}\nolimits}
\renewcommand{\Im}{\mathop{\rm Im}\nolimits}
\newcommand{\Tr}{\mathop{\rm Tr}\nolimits}
\newcommand{\delb}{\bar{\partial}}
\newcommand{\note}[1]{\marginpar{\raggedright\if@twoside\ifodd\c@page\raggedleft\fi\fi\sf\scriptsize \red{RMK: #1}}}
\newcommand\red[1]{\textcolor{red}{#1}}
\newcommand{\be}{\begin{equation}}
\newcommand{\ben}{\begin{equation}\nonumber}
\newcommand{\ee}{\end{equation}}
\newcommand{\bp}{\begin{para}}
\newcommand{\ep}{\end{para}}
\newcommand{\CC}{\mathbb C}
\newcommand{\fid}{\mathrm{fid}}
\newcommand{\supp}{\mathrm{supp}\,}
\newcommand{\appr}{\mathrm{app}}
\def\im{\textrm{im}\,}
\def\D{\mathbb{D}}
\newsavebox{\dotbox}
\newtheorem{proposition}{\textbf{Proposition}}
\newtheorem{lemma}[proposition]{\textbf{Lemma}}
\newtheorem{corollary}[proposition]{\textbf{Corollary}}
\newtheorem{theorem}[proposition]{\textbf{Theorem}}
\theoremstyle{definition}
\newtheorem{definition}{\textbf{Definition}}
\newtheorem*{example*}{\textbf{Example}}
\newtheorem*{remark}{\textbf{Remark}}
\theoremstyle{remark}      
\newtheorem*{rem*}{Remark}
\newtheorem{claim}{Claim}
\newcounter{para}[section]
\newenvironment{para}[2][]{\refstepcounter{para}\noindent\ignorespaces{\bf #1\thepara. #2.} \rmfamily}{\noindent\ignorespacesafterend\bigskip}
\numberwithin{proposition}{section}
\numberwithin{definition}{section}
\begin{document}
\title[Sectional curvature asymptotics]{Sectional curvature asymptotics of the Higgs bundle moduli space}
\date{\today}

\author{Jan Swoboda}
\address{Mathematisches Institut der Universit\"at M\"unchen\\Theresienstra{\ss}e 39\\D--80333 M\"unchen\\ Germany}
\email{swoboda@math.lmu.de}


\maketitle

\begin{abstract}
We determine the asymptotic behavior in the limit of large Higgs fields of the sectional curvatures of the natural $L^2$ hyperk\"ahler metric $G_{L^2}$ of the moduli space $\mathcal M$ of rank-$2$ Higgs bundles on a Riemann surface $\Sigma$ away from the discriminant locus. It is shown that their leading order part is given by a sum of Dirac type contributions on $\Sigma$, for which we find explicit expressions.
\end{abstract}

\section{Introduction}
%
%

Starting with Hitchin's seminal article \cite{hi87}, the moduli space $\mathcal M(r,d)$ of solutions to the self-duality equations on a vector bundle $E$ of rank $r$ and degree $d$ over a Riemann surface $\Sigma$  has been the object of intense research from a number of quite different perspectives. By work of Donaldson \cite{Do87} (which has been extended from Riemann surfaces to higher dimensional K\"ahler manifolds by Corlette \cite{Co88}), the set of irreducible representations of the fundamental group of the Riemann surface into the Lie group $\operatorname{SL}(r,\C)$ is parametrized by (gauge equivalence classes of)  solutions to the  self-duality equations. This parametrization is in terms  of certain equivariant harmonic maps and thus provides an intricate link between the fields of   differential geometry, low-dimensional topology and geometric analysis. Moreover, as an instance of the Kobayashi-Hitchin correspondence, the  moduli spaces of stable Higgs  bundles over a curve is in bijective  correspondence to that of irreducible solutions to the self-duality equations, furnishing a further  link to complex geometry.\\
\medskip\\
In this work, we focus on the differential geometric apects of the moduli space and study asymptotic curvature properties of the natural $L^2$ (or Weil-Petersson type) metric   $G_{L^2}$ it comes equipped with. This metric arises from an (infinite-dimensional) hyperk\"ahler reduction, thus is itself a hyperk\"ahler  metric which by a result due to Hitchin \cite{hi87} is complete if $r$ and $d$ are coprime. The moduli space $\mathcal M(r,d)$ is noncompact, and    its large scale properties remained unexplored until very recently. First steps towards a better understanding of the ends structure of $\mathcal M(2,d)$, both of analytic properties of solutions of the self-duality equations and of the asymptotic structure of $G_{L^2}$, were taken in \cite{msww14,msww15,msww17}. Very recent results concerning the large scale structure of solutions in the case of rank $r\geq3$ are due to Mochizuki \cite{mo16} and Fredrickson \cite{fr16}. The former work also covers Higgs bundles whose determinants are holomorphic quadratic differentials with multiple zeroes, a situation not considered in \cite{msww14}.\\   
\medskip\\
The aim of this article is to carry the study of the large scale geometric properties of the moduli space further and to shed some light on the asymptotic behavior of the sectional curvatures of $G_{L^2}$ in the limit of large Higgs fields. We place our results in the setup considered in \cite{msww17} and again  restrict our attention to the rank-$2$ case and to a sector of $\mathcal M:=\mathcal M(2,0)$ outside the so-called \emph{discriminant locus}. We thus consider the open and dense subset $\mathcal B':=\mathcal B\setminus\mathcal D$, where $\mathcal B=H^0(\Sigma, K_{\Sigma}^2)$ is the complex vector space of holomorphic quadratic differentials on $\Sigma$ and $\mathcal D$ is the discriminant locus consisting of holomorphic quadratic differentials with at least one multiple zero. We call points $q\in\mathcal B'$ simple holomorphic quadratic differentials. Then let $\mathcal M'=\det^{-1}(\mathcal B')$ denote the inverse image of $\mathcal B'$ under the \emph{Hitchin fibration} $\det\colon \mathcal M\to\mathcal B$, $[(A,\Phi)]\mapsto\det\Phi$. The Hitchin fibration has a natural interpretation as an algebraic completely integrable system, and in particular the fibre $\det^{-1}(q)$ over any $q\in\mathcal M'$ is a torus of complex dimension $3(\operatorname{genus}(\Sigma)-1)$, cf.\ \cite{hi87} for details. It admits a global section $\det^{-1}\colon\mathcal B\to\mathcal M$, the \emph{Hitchin section}.  In the forthcoming article \cite{msww17}, asymptotic properties of the restriction to $\mathcal M'$ of the Riemannian metric $G_{L^2}$ are studied. It is shown there that the  restricted   metric is asymptotically close to the so-called  ``semi-flat'' hyperk\"ahler metric $G_{sf}$ associated with the integrable system data of the  Hitchin fibration (cf.\ \cite{fr99,gmn10} for generalities on semi-flat hyperk\"ahler metrics). The metric $G_{sf}$ is a cone metric, i.e.,  it is of the form
\begin{equation*}
G_{sf}=dt^2+t^2 G_{B'}+G_{\operatorname{fibre},Y},
\end{equation*}
where $\{t,Y\}$ denotes a suitable polar coordinate system with radial variable $t$ on the base $\mathcal B'$. Here $G_{B'}$ is a Riemannian metric on the unit sphere $\{(t,Y)\in\mathcal B'\mid t=1\}$ and $G_{\operatorname{fibre},Y}$ is an intrinsically  flat metric on the fibre $\det^{-1}(t^2Y)$ over the point $t^2Y\in B'$ which depends on $Y$ but not on $t$. We let  $\mathcal{SH}$ denote the image of $\mathcal M'$ under the Hitchin section.
The goal of this article is to understand the asymptotic   properties of the sectional curvatures of $(\mathcal M',G_{L^2})$ as $t\to\infty$. Since it is shown in \cite{msww17} that $G_{L^2}$ is asymptotically flat along the fibres of the Hitchin fibration, we consider here the sectional curvatures in direction of two-planes    tangent  to $\mathcal{SH}$.
In this respect the results presented in this article complement those  of \cite{msww17}.\\
\medskip\\
To state our main result, we need to introduce the following pieces of notation. Let $q\in\mathcal B'$ be a simple holomorphic quadratic differential. We canonically identify all tangent spaces of $\mathcal B'$ with $H^0(\Sigma,K_{\Sigma}^2)$. For $t>0$ and a linearly independent pair $\dot f_1,\dot f_2\in H^0(\Sigma,K_{\Sigma}^2)$, let $(X_t,Y_t)\in T_{\det^{-1}(t^2q)}\mathcal{SH}$ be its image of $(\dot f_1,\dot f_2)$ under the differential of $\det^{-1}$. For convenience we further assume   that  the pair $(X_{\infty},Y_{\infty})$ is $L^2$ orthonormal. By the convergence $(X_{t},Y_{t})\to (X_{\infty},Y_{\infty})$ as $t\to\infty$ (cf.\ \textsection\ref{subsect:apprtangentspaces}) the pair $(X_{\infty},Y_{\infty})$ forms an approximately orthonormal frame for $t$ sufficiently large.   We are interested in   the leading order asymptotics as $t\to\infty$ of the sectional curvature of the tangent two-plane  $\Pi(X_t,Y_t)$ spanned by $(X_t,Y_t)$.

\begin{theorem}\label{thm:mainthm} 
For a simple holomorphic quadratic differential $q\in H^0(\Sigma,K_{\Sigma}^2)$ with zero set $\mathfrak p=q^{-1}(0)$ and for $t>0$ sufficiently large, let $(A_t,t\Phi_t)\in\mathcal{SH}$ be the image of $t^2q$ under the Hitchin section. Let $(X_t,Y_t)$ be the  pair of linearly independent tangent vectors  induced by   $(\dot f_1,\dot f_2)\in    H^0(\Sigma, K_{\Sigma}^2)$, and let  $\Pi(X_t,Y_t)$ be the  two-plane spanned by $(X_t,Y_t)$.  Then the sectional curvature $K$ of $\Pi(X_t,Y_t)$ with respect to $G_{L^2}$ satisfies
\begin{equation*}
K(\Pi(X_t,Y_t)) = t^{-\frac{4}{3}}\sum_{p\in\mathfrak p} \Lambda(\dot f_1(p),\dot f_2(p),\dot f_1(p),\dot f_2(p))+\mathcal O(t^{-\frac{5}{3}}).
\end{equation*} 
Here $\Lambda$ is some $\R$-multilinear form, which does not depend on $p$, $q$ or $t$.
\end{theorem}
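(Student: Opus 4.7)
The $L^2$ metric $G_{L^2}$ is obtained as the hyperk\"ahler quotient of the flat $L^2$ metric on the configuration space $\mc A\times\Omega^{1,0}(\Sigma,\mathrm{End}_0 E)$ by the Hamiltonian action of the gauge group. Because the ambient metric is flat, the sectional curvature of the quotient is given entirely by an O'Neill-type formula in terms of the vertical projection of the Lie bracket of horizontal lifts. My plan is to represent $X_t$ and $Y_t$ by their horizontal lifts $X_t^h=(\dot A_1,\dot\Phi_1)$ and $Y_t^h=(\dot A_2,\dot\Phi_2)$, i.e.\ by solutions of the linearised self-duality equations in Coulomb gauge, and to write
\begin{equation*}
K(\Pi(X_t,Y_t))=\frac{3}{4}\,\frac{\|\mathrm{vert}\,[X_t^h,Y_t^h]\|_{L^2}^2}{\|X_t\|_{L^2}^2\|Y_t\|_{L^2}^2-\langle X_t,Y_t\rangle_{L^2}^2},
\end{equation*}
where the vertical projection is taken with respect to the gauge orbits, so that the numerator is a local integral over $\Sigma$. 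Since $(X_\infty,Y_\infty)$ is $L^2$-orthonormal, the denominator tends to $1$ as $t\to\infty$ and it suffices to track the numerator.

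The second step is to expand the horizontal lifts using the gluing and approximation results from \cite{msww17}. Outside arbitrarily small neighbourhoods of the zero set $\mathfrak p=q^{-1}(0)$, both $(A_t,t\Phi_t)$ and its linearised perturbations agree with the decoupled limiting model built from the singular harmonic metric associated with $|q|^{1/2}$, up to errors that are exponentially small in $t$; on this region the horizontal lifts are explicit and a direct computation shows that the vertical part of their bracket vanishes to all polynomial orders in $t^{-1}$. The only polynomially sized contribution to $\|\mathrm{vert}\,[X_t^h,Y_t^h]\|_{L^2}^2$ therefore comes from the small disks $D_p(r_t)$, $r_t\sim t^{-2/3}$, around each $p\in\mathfrak p$, on which the solution is modelled by a universal fiducial solution and the linearised perturbation is controlled, to leading order, by the value $\dot f_j(p)$ of $\dot f_j$ at $p$.

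The final step is a rescaling argument. On each disk $D_p(r_t)$ the substitution $w=t^{2/3}z$ in a normal coordinate makes the fiducial solution $t$-independent, the area element transforms as $|dz|^2=t^{-4/3}|dw|^2$, and $\dot f_j(z)\,dz^2=\dot f_j(p)\,dw^2+\mc O(t^{-2/3})$. Substituted into the numerator, the integral over $D_p(r_t)$ reduces to $t^{-4/3}$ times a universal, $t$-independent integral in $w$ that is $\R$-quadrilinear in $\dot f_1(p)$ and $\dot f_2(p)$; summing over $p\in\mathfrak p$ and dividing by the denominator gives the stated leading term, and the $\mc O(t^{-5/3})$ remainder comes from the next term in the Taylor expansion of $\dot f_j$ at $p$ together with the exponentially small tails of the fiducial correction. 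The independence of $\Lambda$ from $p, q, t$ follows because all simple zeros of quadratic differentials yield isomorphic local models after a $\C^{\ast}$-rescaling. The main technical obstacle I anticipate is ruling out competing $t^{-4/3}$ contributions, in particular cross-terms between the decoupled and fiducial parts on the gluing annulus and non-trivial effects from the nonlocal vertical projection; this will require sharp mapping-property estimates for the linearised Hitchin operator in weighted Sobolev spaces adapted to the scale $t^{-2/3}$.
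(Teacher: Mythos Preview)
Your curvature formula is incomplete, and this is the central gap. The moduli space is not simply a Riemannian quotient of a flat manifold: it is the quotient $\mathcal H^{-1}(0)/\mathcal G$, where $\mathcal H^{-1}(0)$ is a \emph{level set} inside the flat configuration space. Consequently the sectional curvature has two kinds of contributions: the O'Neill term you wrote down, coming from the submersion $\mathcal H^{-1}(0)\to\mathcal M$, \emph{and} Gauss-type second fundamental form terms coming from the embedding $\mathcal H^{-1}(0)\hookrightarrow\mathcal A\times\Omega^{1,0}$. In the Jost--Peng formalism used in the paper these appear as
\[
K(X,Y)=3\langle G^0 P_X^{\ast}Y,P_X^{\ast}Y\rangle+\langle G^2 Q_XX,Q_YY\rangle-\langle G^2 Q_XY,Q_XY\rangle,
\]
and the $G^2$ terms are of the same order $t^{-4/3}$ as the O'Neill term; they cannot be dropped. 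Your heuristic that ``the ambient metric is flat, hence only O'Neill'' would be valid if $\mathcal G$ acted on the full flat space with $\mathcal M$ as quotient, but that is not the situation here.

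There is a second, related issue: even the O'Neill numerator is not ``a local integral over $\Sigma$''. Writing it out, $\tfrac{3}{4}\|\mathrm{vert}\,[X^h,Y^h]\|^2=3\langle G^0 P_X^{\ast}Y,P_X^{\ast}Y\rangle$, and the Green operator $G^0=(i^{\ast}i)^{-1}$ is genuinely nonlocal. The same is true of $G^2$. The paper therefore devotes most of its effort to uniform operator-norm bounds on $G^0$ and $G^2$ and to a decomposition $G^j\eta_t=u_t+r_t$ showing that, when $\eta_t$ is concentrated near a zero $p$ with the correct scaling, the solution is to leading order the local model solution on $\D(p)$, with a remainder $r_t$ that is $O(t^{-5/3})$ in $L^2$ and in $C^0$ on the other disks. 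This is exactly what rules out the ``cross-terms'' and ``nonlocal effects'' you flag at the end, and it is not a side issue but the analytic heart of the argument. Your rescaling step and the identification of the $t^{-4/3}$ scale are correct in spirit, but they need to be applied to all three terms above, and they only become rigorous once these Green-operator estimates are in place.
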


We briefly comment on the method of proof of the theorem. As mentioned above, the manifold $\mathcal M$ with its metric $G_{L^2}$ arises as the hyperk\"ahler quotient from a certain (infinite-dimensional) Banach manifold and hence may be placed into the geometric setup considered by Jost--Peng \cite{JP}. This provides us with a formula for the Riemann curvature tensor, hence the sectional curvatures $K$ of $G_{L^2}$ in terms of the Green operators arising from the deformation complex associated with the self-duality equations as stated in Eq.\ \eqref{sel.dua.equ} below. A further ingredient in the proof is for $t$ sufficiently large the rather explicit parametrization  in terms of  holomorphic quadratic differentials of the image $\mathcal{SH}$ of the Hitchin section  as well as its tangent spaces. Together with a uniform bound on the operator norms of the associated Green operators, which we derive here, it permits us to identify the leading order contribution to $K(\Pi(X_t,Y_t))$ in the limit $t\to\infty$.\\
\medskip\\
This article initiates the study of the large scale curvature properties of $(\mathcal M,G_{L^2})$. Related articles dealing with curvatures of Weil-Petersson type metrics on various moduli spaces include the following. Bielawski's article \cite{Bi08} is devoted to curvature properties  of K\"ahler and hyperk\"ahler quotients and contains upper and lower bounds on the sectional curvatures in terms of various quantities related to the metric.  In the case of  $(\mathcal M, G_{L^2})$  he proves a uniform upper bound on the sectional curvatures at a point $(A,\Phi)$ in terms of certain algebraic quantities associated with $(A,\Phi)$.  Furthermore, Biswas and Schumacher \cite{BS06} consider a related metric on the moduli space of Higgs bundles over a K\"ahler manifold $X$, derive explicit expressions for its curvature tensor, and in the case where $X$ is a Riemann surface   show nonnegativity of the holomorphic sectional curvatures. Both articles are based on a careful evaluation of the relevant Green operator appearing in the work \cite{JP} mentioned above. These and related  methods have found  applications in a number of further instances, such as to the curvatures of the moduli space of self-dual connections on bundles over four-manifolds (\cite{GP,It, JP}), the moduli space of $\Sp(1)$-instantons over the $4$-sphere (\cite{Hab}),  the Teichm\"uller moduli space of surfaces of genus $\gamma\geq2$ (\cite{JP}), and to certain moduli spaces of K\"ahler-Einstein manifolds (\cite{Siu,Schu}), to name a few.

\subsection*{\bf{Acknowledgments}}
It is a pleasure to thank Rafe Mazzeo and Hartmut Wei{\ss} for a number of useful discussions related to this work.

\section{The  Higgs bundle  moduli space}

We introduce the setup and recall the construction and some basic properties of the $L^2$ metric on the Higgs bundle moduli space, following   Hitchin \cite{hi87}. Then we collect the results obtained in \cite{msww14} concerning the ends structure of the moduli space, as far as these are needed later on.
%
\subsection{Higgs bundles and the self-duality equations}
Let $\Sigma$ be a compact Riemann surface of genus $\gamma\geq2$ and $E \to \Sigma$ a complex vector bundle of rank $2$. We denote by $\End(E)$ and $\mathfrak{sl}(E)$ the   bundles of endomorphisms, respectively tracefree endomorphisms of $E$.   For a hermitian metric $h$ on $E$, we let $\mathfrak{su}(E)=\mathfrak{su}(E,h)$ denote the subbundle of endomorphisms which are skew-hermitian with respect to $h$. We use the notation $\mathcal G=\SU(E,h)$ for the group of special unitary gauge transformations, and write $\mathcal G^c=\SL(E)$ for its complexification. Let further $K_{\Sigma}\to \Sigma$ be the canonical line bundle of $\Sigma$. The choice of a holomorphic structure for $E$ is equivalent with the choice of a {\em Cauchy-Riemann operator} $\delb\colon\Omega^0(E)\to\Omega^{0,1}(E)$, and thus we may consider a holomorphic vector bundle as a pair $(E,\delb)$. A {\em Higgs field} $\Phi$ is a holomorphic section of $\End(E)\otimes K_{\Sigma}$, i.e.\ $\Phi\in H^0(\Sigma,\End(E)\otimes K_{\Sigma})$. By a {\em Higgs bundle} of holomorphically trivial determinant we mean a triple $(E,\delb,\Phi)$ such that $\det E:=\Lambda^2 E$ is {\em holomorphically} trivial and the Higgs field $\Phi$ is traceless, i.e.\ $\Phi\in H^0(\Sigma,\mf{sl}(E)\otimes K_{\Sigma})$.   By a Higgs bundle we shall always mean a Higgs bundle of holomorphically trivial determinant. The   group $\mc G^c$  acts on Higgs bundles diagonally as
\begin{equation*}
g\cdot(E,\bar\partial,\Phi)=(E,g^{-1}\circ\bar\partial\circ g,g^{-1}\Phi g).
\end{equation*}
In order to obtain a smooth   moduli space  we need to restrict to so-called {\em stable Higgs bundles}. In our setting, where the degree of $E$ vanishes, a Higgs bundle $(E,\delb,\Phi)$ is stable if and only if any $\Phi$-invariant holomorphic subline bundle  of $E$, i.e.\ a holomorphic subline bundle $L$ satisfying $\Phi(L)\subset L\otimes K_{\Sigma}$, is of negative degree. We denote by
\begin{equation*}
\mc M=\frac{\{\mbox{stable Higgs bundles}\}}{\mc G^c}
\end{equation*}
the resulting {\em moduli space of stable Higgs bundles}, which can be proven  to be  a smooth complex manifold of dimension $6(\gamma-1)$. It is a non-obvious fact that $\mc M$  carries a {\em natural hyperk\"ahler metric} which naturally appears by reinterpreting the holomorphic data in gauge theoretic terms. To explain this, we fix a hermitian metric $h$ on $E$. Holomorphic structures, as given by a Cauchy-Riemann operator $\delb$ are then in bijective correspondence with special unitary connections, this corresponding being furnished by mapping a unitary connection $d_A$ to its $(0,1)$-part $\delb_A$. After the choice of a base connection the unitary connection is in turn determined by an element in  $\Omega^{0,1}(\Sigma,\mf{sl}(E))$. The above action of the group $\mathcal G^c$ of complex gauge transformations   thus induces one on the set of pairs $(A,\Phi)$. We denote this action by $(A^g,\Phi^g)$ for $g\in\mc G^c$. Hitchin proves that in the $\mc G^c$-equivalence class $[(E,\delb,\Phi)]=[(A,\Phi)]$ there exists a representative $(B,\Psi)=(A^g,\Phi^g)$, unique up to modification by special unitary gauge transformations, such that the so-called {\em self-duality equations}
\begin{equation}\label{sel.dua.equ}
\mc H(B,\Psi):=\begin{pmatrix}F_B^{\perp}+[\Psi\wedge\Psi^*]\\\delb_B\Psi\end{pmatrix}=0
\end{equation}
hold. Here, $F_{B}^{\perp}\in\Omega^2(\Sigma,\frak{su}(E))$ denotes the traceless part of the curvature of the connection $B$ and $\Phi^*\in\Omega^{0,1}(\Sigma,\mathfrak{sl}(E))$ is the hermitian conjugate with respect to $h$. We refer to $\mc H$ as the \emph{nonlinear Hitchin map}. Stability of $(E,\Phi)$ translates into the irreducibility of $(B,\Psi)$. It follows that there is a diffeomorphism
\begin{equation*}
\mc M\cong\frac{\{(B,\Psi) \mid(B,\Psi)\mbox{ solves~\eqref{sel.dua.equ} and is irreducible}\}}{\mc G}.
\end{equation*}
The  self-duality   equations~\eqref{sel.dua.equ} can be interpreted as a {\em hyperk\"ahler moment map} with respect to the natural action of the special unitary gauge group $\mc G$ on the quaternionic vector space $\Omega^{0,1}(\Sigma,\mf{sl}(E))\times\Omega^{1,0}(\Sigma,\mf{sl}(E))$ with its natural $L^2$ metric, cf.\ \cite{hi87,HKLR} for details. Consequently, this metric descends to a hyperk\"ahler metric $G_{L^2}$ on the quotient $\mc M$. We describe this metric next.

%
\subsection{The $L^2$ metric}
In the following, adjoints of differential operators are always understood to be taken with respect to a fixed Riemannian metric compatible with the complex structure of the Riemann surface $\Sigma$. We let $\operatorname{vol}_{\Sigma}$ denote the associated area form. Fix a pair $(A,\Phi)\in\mathcal H^{-1}(0)$ and consider the deformation complex
\begin{multline}\label{eq:deformcomplex}
0\to\Omega^0(\Sigma,\su(E))\stackrel{i_{(A,\Phi)}}{\longrightarrow}\Omega^1(\Sigma,\su(E))\oplus\Omega^{1,0}(\Sigma,\mf{sl}(E))\\
\stackrel{L_{(A,\Phi)}}{\longrightarrow}\Omega^2(\Sigma,\su(E))\oplus\Omega^2(\Sigma,\mf{sl}(E))\to0.
\end{multline}
The first differential is the linearized action of $\mc G$ at $(A,\Phi)$,
\begin{equation*}
i_{(A,\Phi)}(\gamma)=(d_A \gamma,[\Phi\wedge\gamma]),
\end{equation*}
while the second is the linearization of the Hitchin map $\mathcal H$,
\begin{equation}\label{eq:linearizedhitchinmap}
L_{(A,\Phi)}(\dot A,\dot\Phi)=\begin{pmatrix}d_A\dot A+[\dot\Phi\wedge\Phi^*]+[\Phi\wedge\dot\Phi^*]\\
\delb_A\dot\Phi+[\dot A\wedge \Phi]\end{pmatrix}.
\end{equation}
Exactness of the above sequence holds whenever the solution $(A,\Phi)$ is irreducible, cf.\ the  discussion in~\cite{hi87}, which will always be the case here.
The tangent space of $\mc M$ at $[(A,\Phi)]$ can  therefore be identified with the quotient 
\begin{equation*}
\ker L_{(A,\Phi)}/\im i_{(A,\Phi)}\cong\ker L_{(A,\Phi)}\cap(\im i_{(A,\Phi)})^\perp.
\end{equation*}
Now for a  vector $(\dot A, \dot \Phi)$,
\begin{equation}\label{eq:Cgaugecondition}
(\dot A,\dot\Phi)\perp\im i_{(A,\Phi)}\qquad\Longleftrightarrow\qquad d_A^*\dot A+\Re[\Phi^*\wedge\dot\Phi]=0.
\end{equation}
The real part enters since we consider the Riemannian metric on $\Omega^1(\Sigma,\mf{su}(E))$ coming from the hermitian metric on $\Omega^{1,0}(\Sigma,\mf{sl}(E))$. If this condition is satisfied  we will say that $(\dot A, \dot \Phi)$ is in {\em Coulomb gauge}. For tangent vectors $(\dot A_i, \dot \Phi_i)$, $i=1,2$, in Coulomb gauge the induced $L^2$ metric is given by
\begin{equation*}
G_{L^2}((\dot A_1,\dot\Phi_1),(\dot A_2,\dot\Phi_2))=\int_{\Sigma}\langle\dot A_1,\dot A_2\rangle+\langle\dot\Phi_1,\dot\Phi_2\rangle\,\operatorname{vol}_{\Sigma}.
\end{equation*}
Here we identify $\Omega^1(\Sigma,\mf{su}(E))\cong\Omega^{0,1}(\Sigma,\mf{sl}(E))$ and use the hermitian inner product on $\mathfrak{sl}(2,\C)$ given by $\langle A,B\rangle=\Tr(AB^{\ast})$.

\subsection{The structure of ends of the Higgs bundle moduli space}

From now on it is always understood that the Higgs fields $\Phi$ we consider is  {\em simple}, in the sense that the holomorphic quadratic differential $\det\Phi$ has only simple zeroes. This implies, in particular, the stability of any Higgs pair $(A,\Phi)$ in the above sense. The set of (gauge equivalence classes of) Higgs pairs with simple Higgs field is an open and dense subset of $\mathcal M$. It admits a compactification by so-called  `limiting configurations', consisting of pairs $(A_\infty, \Phi_\infty)$ 
which satisfy a decoupled version of the self-duality equations \eqref{sel.dua.equ}, namely
\begin{equation*}
F_{A_\infty}^{\perp}=0,\quad[\Phi_\infty\wedge\Phi^*_\infty]=0,\quad\delb_{A_\infty}\Phi_\infty=0.
\end{equation*}
Each $A_\infty$ is   flat with simple poles in the set $\mathfrak p$ of zeroes of $\det\Phi$, while the limiting Higgs fields are holomorphic with respect to these connections and have a specified behavior near these poles. We will return to a description of limiting configurations in more concrete terms in the next section.

\begin{theorem}[\cite{msww14}, existence and deformation theory of limiting configurations]\label{thm:existencelimconf}
Let $(A_0,\Phi_0)$ be a Higgs pair such that $q:= \det \Phi_0$ has only simple zeroes. Then there exists a complex 
gauge transformation $g_\infty$ on $\Sigma^\times=\Sigma\setminus q^{-1}(0)$ which transforms $(A_0,\Phi_0)$ into 
a limiting configuration. Furthermore, the space of limiting configurations with fixed determinant $q \in H^0(\Sigma,K_{\Sigma}^2)$  having simple zeroes is a torus of dimension $6\gamma-6$, where $\gamma$ is the genus of $\Sigma$.
\end{theorem}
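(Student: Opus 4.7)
The proof splits naturally into two parts: the existence of the complex gauge transformation $g_\infty$ on $\Sigma^\times$ reducing $(A_0,\Phi_0)$ to a limiting configuration, and a parameter count identifying the moduli of limiting configurations of determinant $q$ with a torus of the stated dimension.

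For the existence of $g_\infty$, the key observation is that on $\Sigma^\times$ the Higgs field $\Phi_0$ has two distinct eigenvalues $\pm\lambda$ with $\lambda^2 = q$ (single-valued on the spectral double cover $\pi\colon \hat\Sigma \to \Sigma$ cut out by $v^2 = \pi^*q$ inside $K_\Sigma$). These eigenvalues determine a $\Phi_0$-invariant holomorphic splitting $E|_{\Sigma^\times} = L \oplus L^{-1}$. Because the holomorphicity condition $\bar\partial_A \Phi = 0$ is automatically preserved by any complex gauge transformation, only the two remaining decoupled equations $F^\perp_{A_\infty} = 0$ and $[\Phi_\infty \wedge \Phi_\infty^*] = 0$ need to be imposed. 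I would choose $g_\infty$ so that the resulting hermitian metric on $E$ makes the splitting $L \oplus L^{-1}$ orthogonal; this forces $\Phi_\infty$ to be normal, so $[\Phi_\infty \wedge \Phi_\infty^*]=0$ holds automatically. In such an adapted frame, $g_\infty$ is diagonal of the form $\operatorname{diag}(e^u, e^{-u})$, and the remaining condition $F^\perp_{A_\infty}=0$ reduces to a single scalar Laplace-type equation for the real function $u$ on $\Sigma^\times$, carrying a prescribed singular profile at each zero of $q$ dictated by the fiducial local solutions constructed in \cite{msww14}.

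For the parameter count, I would pass to the spectral cover $\hat\Sigma$. A limiting configuration with determinant $q$ is encoded by a holomorphic line bundle $\hat L \to \hat\Sigma$ whose pushforward under $\pi$ recovers $E$, and the constraint $\det E \cong \mathcal O_\Sigma$ translates, up to a fixed twist, to the Prym condition that $\hat L$ lies in the kernel of the norm map $\operatorname{Nm}\colon \Jac(\hat\Sigma) \to \Jac(\Sigma)$. By Riemann--Hurwitz applied to the double cover $\hat\Sigma \to \Sigma$ branched at the $4\gamma - 4$ zeros of $q$, one finds $g(\hat\Sigma) = 4\gamma - 3$, so $\Prym(\hat\Sigma/\Sigma)$ is a complex torus of dimension $g(\hat\Sigma) - g(\Sigma) = 3\gamma - 3$, giving real dimension $6\gamma - 6$. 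Since the flat hermitian structure on $L$ is equivalent (via its monodromy representation) to a point of this Prym torus, the identification is complete.

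The main obstacle is the analytical content of the existence step: both $A_\infty$ and $\Phi_\infty$ are singular at $\mathfrak p$, and so is $g_\infty$. The nontrivial point is to match the local fiducial models near each $p \in \mathfrak p$ to the globally defined eigenbundle structure on $\Sigma^\times$, ensuring that the scalar PDE for $u$ admits a solution with the correct asymptotic behavior at the punctures and that the resulting limiting configuration lies in an appropriate weighted function space. This analytic matching procedure, together with uniqueness of the fiducial profiles at the zeros of $q$, is what makes the Prym parameter count above faithfully describe the full moduli of limiting configurations; the details are carried out in \cite{msww14}.
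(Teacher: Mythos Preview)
The paper does not prove this theorem; it is quoted from \cite{msww14} and stated without argument. There is therefore no proof in the present paper to compare your proposal against.

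That said, your outline is a faithful sketch of the strategy in \cite{msww14}. The existence step there proceeds exactly as you describe: diagonalize $\Phi_0$ holomorphically on $\Sigma^\times$ using its eigenline decomposition, then use the remaining diagonal freedom $g_\infty=\operatorname{diag}(e^u,e^{-u})$ to impose $F_{A_\infty}^\perp=0$, which becomes a scalar Poisson equation for $u$ with prescribed logarithmic singularities at $\mathfrak p$. One small slip: since $\Phi_0$ is traceless, its eigenvalues satisfy $\lambda^2=-\det\Phi_0=-q$, not $\lambda^2=q$; this does not affect the argument. For the parameter count, your Prym computation via Riemann--Hurwitz is correct and yields the right dimension $3\gamma-3$ over $\C$, i.e.\ $6\gamma-6$ real. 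In \cite{msww14} the torus structure is obtained somewhat more directly, by parametrizing limiting configurations via the moduli of flat unitary line-bundle connections on $\Sigma^\times$ with fixed conjugacy class of holonomy at each puncture; the identification with the Prym variety of the spectral cover is the Hitchin-fibration picture and gives the same answer. Your closing paragraph correctly identifies where the actual work lies: matching the fiducial singular model at each $p\in\mathfrak p$ to the global eigenframe so that the scalar equation for $u$ is solvable in the appropriate weighted space.
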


Each limiting configuration can be desingularized into  a smooth solution of the self-duality equations. This desingularization gives rise to a parametrization of  a neighborhood of that part of the boundary of  $\mathcal M$ which consisits of simple Higgs fields.

\begin{theorem}[\cite{msww14}, desingularization theorem]
If $(A_\infty,\Phi_\infty)$ is a limiting configuration, then there exists a family $(A_t,\Phi_t)$ of solutions of the rescaled Hitchin equation
\begin{equation*}
F_A^{\perp} +t^2[\Phi\wedge\Phi^*]=0,\quad\delb_A \Phi=0
\end{equation*}
provided $t$ is sufficiently large, where
\begin{equation*}
(A_t,\Phi_t) \longrightarrow (A_\infty,\Phi_\infty)
\end{equation*}
as $t \nearrow \infty$, locally uniformly on $\Sigma^\times$ along  with all derivatives, at an exponential rate in $t$. Furthermore, $(A_t,\Phi_t)$  is complex gauge equivalent to $(A_0,\Phi_0)$ if $(A_\infty,\Phi_\infty)$ is the limiting configuration associated with $(A_0,\Phi_0)$.
\end{theorem}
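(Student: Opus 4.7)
The plan is to construct $(A_t,\Phi_t)$ by a gluing-and-perturbation scheme, producing an approximate solution that is exact outside small disks around the zero set $\mathfrak p=q^{-1}(0)$, and then correcting it by a complex gauge transformation via a contraction mapping argument. First I would fix the limiting configuration $(A_\infty,\Phi_\infty)$ and choose a conformal coordinate $z$ near each $p\in\mathfrak p$ in which $q=-z\,(dz)^{\otimes 2}$. Away from small disks $D_R(p)$ the pair $(A_\infty,\Phi_\infty)$ already satisfies the decoupled self-duality equations exactly, so only a neighborhood of each puncture needs to be modified.

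Next I would import Hitchin's rotationally symmetric \textbf{fiducial solution} on $\C$: a model family $(A^{\mathrm{fid}}_t,t\Phi^{\mathrm{fid}}_t)$ of the rescaled equations with $\det\Phi^{\mathrm{fid}}_t=q$, expressed in closed form via the solution of a Painlev\'e-III-type ODE, which converges to the limiting configuration on $\C\setminus\{0\}$ exponentially fast as $t\to\infty$. I would then define an approximate solution $(A^{\mathrm{app}}_t,\Phi^{\mathrm{app}}_t)$ by replacing $(A_\infty,\Phi_\infty)$ with $(A^{\mathrm{fid}}_t,\Phi^{\mathrm{fid}}_t)$ inside each $D_{R/2}(p)$ and interpolating on the annulus $D_R(p)\setminus D_{R/2}(p)$ using a cutoff applied to the complex gauge transformation that identifies the two on the annulus. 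The resulting Hitchin error $\mathcal H(A^{\mathrm{app}}_t,t\Phi^{\mathrm{app}}_t)$ is supported in the annuli and is $O(e^{-ct})$ in any suitable norm, because both models agree with the limiting configuration to exponential order there.

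The core analytic step is to perturb the approximate solution to an exact one within its complex gauge orbit. Writing a candidate solution as $(A_t,\Phi_t)=\exp(\gamma)\cdot(A^{\mathrm{app}}_t,\Phi^{\mathrm{app}}_t)$ with $\gamma$ a section of $i\,\su(E)$, the rescaled equation becomes
\begin{equation*}
L^{\mathrm{app}}_t\,\gamma + Q_t(\gamma) = -\mathcal H(A^{\mathrm{app}}_t,t\Phi^{\mathrm{app}}_t),
\end{equation*}
where $L^{\mathrm{app}}_t$ is the linearization (essentially $d_A^\ast d_A+t^2[\Phi,[\Phi^\ast,\cdot]]$ plus cross terms) and $Q_t$ gathers quadratic and higher terms. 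Working in weighted Sobolev spaces adapted to the punctures, I would establish a uniform-in-$t$ bound on $(L^{\mathrm{app}}_t)^{-1}$ with at most polynomial growth, by combining a positive lower bound on compact subsets of $\Sigma^\times$ (coming from irreducibility of $(A_\infty,\Phi_\infty)$) with the model invertibility of the fiducial linearization on the disks. Pairing this uniform inverse with the exponentially small error and the quadratic estimate on $Q_t$, a standard Banach fixed-point argument in a small ball produces a unique $\gamma_t$ with $\|\gamma_t\|=O(e^{-ct})$, yielding the required smooth solution by elliptic regularity.

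The convergence statement then follows: on any compact $K\Subset\Sigma^\times$, for $R$ small enough the gluing is trivial, so $(A_t,\Phi_t)|_K=\exp(\gamma_t)\cdot(A_\infty,\Phi_\infty)|_K$ and $\gamma_t\to 0$ exponentially with all derivatives by bootstrapping. The complex gauge equivalence claim is essentially tautological from the construction: if $(A_\infty,\Phi_\infty)=g_\infty\cdot(A_0,\Phi_0)$ on $\Sigma^\times$ as in Theorem~\ref{thm:existencelimconf}, then the fiducial replacement, the cutoff interpolation, and the final perturbation $\exp(\gamma_t)$ are all complex gauge transformations, so $(A_t,\Phi_t)$ lies in the $\mathcal G^c$-orbit of $(A_0,\Phi_0)$. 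The main obstacle is the uniform invertibility of $L^{\mathrm{app}}_t$, since the potential term $t^2[\Phi,[\Phi^\ast,\cdot]]$ degenerates along the kernel of $\mathrm{ad}\,\Phi_\infty$ near $\mathfrak p$; choosing weights at the punctures that align with the fiducial model's decay rates, and matching them across the gluing annulus, is where the delicate analysis lies.
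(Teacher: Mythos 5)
This theorem is only quoted in the paper (from \cite{msww14}), and your reconstruction matches the strategy of that cited proof essentially exactly: fiducial (Painlev\'e III) models glued to the limiting configuration near the zeros of $q$, an exponentially small Hitchin error supported in the gluing annuli, and a contraction-mapping correction within the complex gauge orbit using a polynomially bounded inverse of the linearized operator, with the degeneration of $t^2[\Phi\wedge[\Phi^\ast\wedge\cdot]]$ along $\ker\operatorname{ad}\Phi_\infty$ correctly identified as the delicate point. No gaps worth flagging at this level of detail.
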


Near the zeroes of $q=\det\Phi$ the solutions $(A_t,\Phi_t)$ obtained in this theorem can be arranged to be in  standard form, i.e.\ to coincide with the so-called {\emph{fiducial solutions}} $(A_t^{\fid},\Phi_t^{\fid})$, which will be further discussed in the next section.

\section{Approximate local horizontal tangent frames}

We give a rather explicit description of the image $\mathcal{SH}$ of the Hitchin section and its tangent spaces, which further on will permit us to determine asymptotic properties of the associated sectional curvatures in the limit $t\to\infty$. All the results discussed in this section are either contained in \cite{msww14} or will be part of  the forthcoming article \cite{msww17}, which also contains complete proofs.

\subsection{The (approximate)  Hitchin section}\label{subsect:hitchinsection}

Let $\mc B=H^0(\Sigma,K_{\Sigma}^2)$ denote the space of holomorphic quadratic differentials, and $\Lambda \subset \mc B$ the so-called {\emph{discriminant locus}}, consisting of holomorphic quadratic differentials for which at least one zero is not simple. This is a closed subvariety which is invariant under the multiplicative  action of $\CC^\times$, and hence $\mc B':=\mc B\setminus \Lambda$ is an open dense subset of $\mc B$. The determinant is invariant under conjugation and therefore descends to a holomorphic map
\begin{equation*}
\det\colon \mc M\to\mc B,\qquad[(\delb,\Phi)]\mapsto\det\Phi,
\end{equation*}
called the {\em Hitchin fibration}~\cite{hi87}. This map is proper and surjective. Let $\mathcal M'$ denote the inverse image of $\mathcal B'$ under  $\det$. We remark that the space $\mc M'$ with its natural complex symplectic structure admits the interpretation as a  completely integrable system over $\mathcal B'$, cf.\ \cite[Section 44]{gs90} and~\cite{fr99,gmn10,gmn13,ne14}. In particular, the fibres of the Hitchin fibration are  affine tori. Now fix a holomorphic square root $\Theta=K_{\Sigma}^{1/2}$ of the canonical bundle. Then $E$ splits holomorphically as $E=\Theta \oplus \Theta^*$. We define the {\em Hitchin section} by
\begin{equation*}
{\det}^{-1}\colon \mc B\to\mc M,\quad{\det}^{-1}(q)=\left(\delb_{\Theta \oplus \Theta^*} ,\Phi_q=\begin{pmatrix}0&-q\\1&0\end{pmatrix}\right).
\end{equation*}
We let $\mathcal{SH}={\det}^{-1}(\mathcal B')$ denote the image of $\mathcal B'$ under the Hitchin section.\\
\medskip\\
We need to introduce some more pieces of notation. For a simple holomorphic quadratic differential $q\in\mathcal B'$ we denote by $\mathfrak p=\mathfrak p(q)=q^{-1}(0)\subset\Sigma$ its set of zeroes. Around each $p\in\mathfrak p$ we choose a local complex coordinate $z$ centered at $p$ and such that the unit disks $\D(p)$, $p\in\mathfrak p$, are pairwise disjoint. We further set $\Sigma^{\operatorname{ext}}:=\Sigma\setminus\bigcup_{p\in\mathfrak p}\D(p)$ and $\Sigma^{\times}:=\Sigma\setminus\mathfrak p$.\\
\noindent\\
Theorem \ref{thm:existencelimconf} yields  for a simple holomorphic quadratic differential $q$ a (singular) limiting configuration $(A_{\infty},\Phi_{\infty})$ such that $\det\Phi_{\infty}=q$.   It allows for a desingularization, to be described in a moment, by a family $S_t^{\appr}:=(A_t^{\appr},t\Phi_t^{\appr})$ of smooth maps in such a way that $S_t^{\appr}$ lies exponentially close to   the image of $t^2q$ under the Hitchin section. The assignment $t  \mapsto S_t^{\appr} $ is therefore a good approximation of the Hitchin section  along the ray $t\mapsto t^2q$. Explicitly, with respect to the decomposition $E=\Theta \oplus \Theta^*$ and a fixed hermitian metric $H$ with Chern connection $A_H$ on $E$,  the components of $S_t^{\appr}$ are given by
\begin{equation}\label{eq:atappr}
A_t^{\appr}=A_t^{\appr}(q)=A_{H} + 4 f_t(|q|)\Im \bar \partial \log |q| \otimes \begin{pmatrix}
 i & 0 \\ 0 & -i
\end{pmatrix}
\end{equation}
and
\begin{equation}\label{eq:phitappr}
\Phi_t^{\appr} = \Phi_t^{\appr}(q)=\begin{pmatrix} 
 	0 & |q|^{-1/2} e^{-h_t(|q|)}q\\
 	|q|^{1/2}e^{h_t(|q|)} & 0
 \end{pmatrix}. 
 \end{equation}
By construction, $\det\Phi_t^{\appr}=q$ for all $t$. On the disks $\D(p)$, where $p\in\mathfrak p$,  the approximate solution $S_t^{\appr}$ coincides with the so-called \emph{fiducial solution} $(A_t^{\fid},t\Phi_t^{\fid})$. These are a family of exact solutions to the self-duality equations, which are defined on the whole complex plane $\C$, and satisfy $\det    \Phi_t^{\fid}=-z\, dz^2$. Following  \cite[\textsection 3.2]{msww14}, we explain their properties as far as these will be needed later on.  We set
 \begin{equation*}
  f_t:=\frac{1}{8}+\frac{1}{4}rh_t'\colon[0,\infty)\to\R,
\end{equation*}
where  the function $h_t\colon[0,\infty)\to\R$ is defined to be the solution of the ODE 
\begin{equation*}
(r\partial_r)^2 h = 8  t^2 r^3 \sinh (2h)
\label{painleve}
\end{equation*}
with specific asymptotic properties as $r\searrow0$ and $r\nearrow\infty$. By the substitution $\rho = \frac{8}{3} t r^{3/2}$ we get $r\partial_r = \frac{3}{2} \rho \partial_\rho$, and writing $h_t(r) = \psi(\rho)$ for some function $\psi$, we obtain the  $t$-independent ODE
\begin{equation*}
(\rho \partial_\rho)^2 \psi = \frac{1}{2} \rho^2 \sinh (2\psi).
\label{sceqn}
\end{equation*}
The equation \eqref{sceqn} is of Painlev\'e III type. It admits a unique solution which decays exponentially and has a the correct behavior as $\rho \to 0$, namely
\begin{equation*}
\begin{array}{rl}
\bullet\ & \psi(\rho) \sim -\log (\rho^{1/3} \left( \sum_{j=0}^\infty a_j \rho^{4j/3}\right), \quad \rho \searrow 0; \\[0.5ex]
\bullet\ & \psi(\rho) \sim K_0(\rho) \sim \rho^{-1/2} e^{-\rho}, \quad \rho \nearrow \infty; \\[0.5ex]
\bullet\ & \psi(\rho)\mbox{ is monotonically decreasing (and hence strictly positive)}.
\end{array}
\label{proph}
\end{equation*}
The notation $A \sim B$ indicates a complete asymptotic expansion. In the first case, for example, for each $N \in \mathbb N$, 
\[
\left|\rho^{-1/3} e^{-\psi(\rho)} - \sum_{j=0}^N a_j \rho^{4j/3}\right| \leq C \rho^{4(N+1)/3},
\]
with a corresponding expansion for any derivative. The function $K_0(\rho)$ is the Macdonald function (or Bessel function 
of imaginary argument) of order $0$; it has a complete asymptotic expansion involving terms of the
form $e^{-\rho} \rho^{-1/2-j}$, $j \geq 0$, as $\rho \to \infty$.  We recall the asymptotic properties of the functions $h_t$ and $f_t$ as obtained in \cite[Lemma 3.4]{msww14}.

\begin{lemma}\label{f_t-h_t-function}
The functions $f_t(r)$ and $h_t(r)$ have the following properties: 
\begin{enumerate}[(i)]
	\item As a function of  $r$, $f_t$ has a double zero at $r=0$ and increases monotonically from $f_t(0) = 0$ to the limiting value $1/8$ as $r \nearrow \infty$.  In particular, $0 \leq f_t \leq \frac 18$.
	\item As a function of $t$, $f_{t}$ is also monotone increasing. Further, $\lim_{t \nearrow \infty} f_t = f_\infty \equiv \frac18$ 
uniformly in $\mc C^\infty$ on any half-line $[r_0,\infty)$, for $r_0 > 0$. 
	\item There are uniform estimates 
\[
\sup_{r >0}  r^{-1} f_t(r) \leq C t^{2/3} \quad \text{and}\quad \sup_{r >0} r^{-2} f_t(r) \leq C t^{4/3},
\]
where $C$ is independent of $t$. 
	\item When $t$ is fixed and $r \searrow 0$, then $h_t(r) \sim -\tfrac{1}{2} \log r + b_0 + \ldots$, where $b_0$ is an explicit constant. Moreover, $|h_t(r)| \leq C \exp( -\tfrac83 t r^{3/2})/ ( t r^{3/2})^{1/2}$ uniformly for $t \geq t_0 > 0$, $r \geq r_0 > 0$.    
	\item There is a uniform estimate  
	\[
	\sup_{r \in(0,1)} r^{1/2} e^{\pm h_t(r)} \leq C,  	
	\]
	where $C$ is independent of $t$.
\end{enumerate}
\end{lemma}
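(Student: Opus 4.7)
The plan is to reduce every claim about $(f_t, h_t)$ to the known behaviour of the $t$-independent function $\psi(\rho)$ via the substitution $\rho = \tfrac{8}{3} t r^{3/2}$. Since $r\partial_r = \tfrac{3}{2}\rho\partial_\rho$, one obtains
\begin{equation*}
r h_t'(r) = \tfrac{3}{2}\rho\psi'(\rho), \qquad f_t(r) = \tfrac{1}{8} + \tfrac{3}{8}\rho\psi'(\rho),
\end{equation*}
so every estimate in the lemma becomes an estimate on $\rho\psi'(\rho)$ or $\psi(\rho)$, with the $t$-dependence entering only through the argument $\rho$.

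For (i), I would read off from the small-$\rho$ expansion $\psi(\rho) \sim -\tfrac{1}{3}\log\rho - \log a_0 + O(\rho^{4/3})$ that $\rho\psi'(\rho) = -\tfrac{1}{3} + O(\rho^{4/3})$, hence $f_t(r) = O(\rho^{4/3}) = O(t^{4/3} r^2)$ near $r=0$, while at infinity the expansion $\psi(\rho) \sim K_0(\rho)$ makes $\rho\psi'(\rho)$ exponentially small, so $f_t \to \tfrac{1}{8}$. Monotonicity of $f_t$ in $r$ amounts to monotonicity of $\rho \mapsto \rho\psi'(\rho)$, which I would establish by a maximum-principle argument applied to the equation $(\rho\partial_\rho)(\rho\psi') = \tfrac{1}{2}\rho^2\sinh(2\psi) + \rho\psi'$ together with the strict positivity of $\psi$ and the prescribed boundary values $-1/3$ and $0$. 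Monotonicity in $t$ at fixed $r>0$ then follows because $\rho$ is monotone in $t$, and the uniform convergence $f_t \to 1/8$ on $[r_0,\infty)$ is a consequence of the exponential decay of $\psi$ together with $\rho \geq \tfrac{8}{3} t r_0^{3/2} \to \infty$. For (iii) I would split into the regions $\{\rho\leq 1\}$ and $\{\rho\geq 1\}$: on the first, the bound $f_t \leq C t^{4/3}r^2$ gives both $r^{-1}f_t \leq C t^{2/3}$ (using $tr^{3/2} \leq 1$) and $r^{-2}f_t \leq C t^{4/3}$; on the second, $f_t$ is bounded and $r \geq c t^{-2/3}$ yields bounds of matching order.

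For (iv), the small-$\rho$ expansion gives $h_t(r) = \psi(\rho) = -\tfrac{1}{2}\log r + b_0 + O(t^{4/3}r^2)$ with $b_0$ explicit in terms of $a_0$ and $t$, and the Bessel asymptotics $\psi(\rho) \lesssim \rho^{-1/2} e^{-\rho}$ translate through $\rho = \tfrac{8}{3}tr^{3/2}$ to the stated exponential bound on $|h_t|$. For (v), the same expansion yields $r^{1/2} e^{\pm h_t(r)} = r^{(1\mp 1)/2} e^{\pm b_0}\bigl(1 + O(\rho^{4/3})\bigr)$, which is bounded on a small interval $(0,r_0)$, while on the remaining compact interval $[r_0,1]$ the exponential decay from (iv) shows $|h_t|$ is bounded uniformly in $t$. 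The main obstacle lies not in these translations but in the underlying analysis of Painlev\'e III: the existence, uniqueness, strict positivity and matched asymptotic expansions of $\psi$ at both endpoints. I would invoke these from the classical Painlev\'e literature, or else construct $\psi$ directly by a shooting argument combined with super- and subsolutions matching the logarithmic profile at $0$ and the $K_0$ profile at infinity, together with a maximum-principle argument for positivity and monotonicity.
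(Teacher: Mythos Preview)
The paper does not prove this lemma; it is quoted verbatim from \cite[Lemma 3.4]{msww14} (``We recall the asymptotic properties of the functions $h_t$ and $f_t$ as obtained in \cite[Lemma 3.4]{msww14}''). Your proposal is therefore not competing against any argument in the present paper but is a standalone reconstruction, and as such it is essentially sound: the reduction to the single Painlev\'e transcendent $\psi$ via $\rho=\tfrac83 t r^{3/2}$ is exactly the right device, and every claim indeed reduces to the two endpoint expansions of $\psi$ together with its positivity and monotonicity.

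Two small points are worth correcting. In (i), the identity you wrote has a spurious term: from $(\rho\partial_\rho)^2\psi=\tfrac12\rho^2\sinh(2\psi)$ one gets directly $(\rho\partial_\rho)(\rho\psi')=\tfrac12\rho^2\sinh(2\psi)$, with no additive $\rho\psi'$. This actually simplifies the argument, since strict positivity of $\psi$ makes the right-hand side strictly positive, so $\rho\psi'$ is strictly increasing on $(0,\infty)$ from $-\tfrac13$ to $0$ without any appeal to a maximum principle. In (v), the interval should be split at $\rho=1$ (equivalently at the $t$-dependent radius $r\sim t^{-2/3}$), not at a fixed $r_0$: the small-$\rho$ expansion is only valid on the shrinking region $\{r\lesssim t^{-2/3}\}$, where it gives $r^{1/2}e^{h_t}\leq C\,r^{1/2}\rho^{-1/3}\leq C t^{-1/3}$, while on $\{\rho\geq 1\}$ the monotonicity bound $\psi(\rho)\leq\psi(1)$ together with $r\leq 1$ suffices. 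The minus sign is trivial since $\psi>0$ forces $e^{-h_t}<1$.
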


The results in \cite{msww14} show that the approximate solutions  $S_t^{\appr}$  satisfy the self-duality equations up to some error, which decays (in any $C^k$ norm) at an exponential rate to $0$  as $t\to\infty$. Furthermore, for $t$ sufficiently large, it is proven that the complex gauge orbit of $S_t^{\appr}$ contains a unique nearby exact solution $(A_t,t\Phi_t)$. The set of all  approximate solutions  obtained in this way forms a smooth Banach manifold which we denote by $\mathcal M^{\appr}$, and we may  think of its quotient by $\mathcal G$ as a good approximation of the submanifold $\mathcal{SH}$ (the  image of the Hitchin section) of $\mathcal M'$. Our next aim  is to construct a     frame of tangent vectors along $\mathcal{SH}$ in sufficiently explicit terms, so that it is well-suited for the subsequent  sectional curvature computations.

\subsection{Approximate solutions and approximate tangent spaces}\label{subsect:apprtangentspaces}

The   construction of a local frame is carried out first locally on the disks $\D(p)$ about the set $\mathfrak p$ of zeroes of the holomorphic quadratic differential $q=\det \Phi_t^{\appr}$. 
With respect to the standard complex coordinate $z$ on $\D(p)$ we write   $q=f dz^2=-z\, dz^2$.   We consider the variation of the approximate solution $S_t^{\appr}$ with respect to $q$, i.e.~we pick $\dot q \in T_{t^2q}H^0(\Sigma,K_{\Sigma}^2)\cong  H^0(\Sigma,K_{\Sigma}^2)$, where restricted to   $\D(p)$,    $\dot q= \dot f dz^2$ for some holomorphic function $\dot f$. Then  we set  $(\dot{A}_t^{\appr},t\dot{\Phi}_t^{\appr})$, where 
\begin{equation*}
\dot{\Phi}_t^{\appr}:=\left.\frac{d}{d\varepsilon}\right|_{\varepsilon=0}\Phi_t^{\appr}(q+\varepsilon \dot q)\qquad\textrm{and}\qquad \dot{A}_t^{\appr}:=\left.\frac{d}{d\varepsilon}\right|_{\varepsilon=0}A_t^{\appr}(q+\varepsilon \dot q).
\end{equation*}
Thus $(\dot{A}_t^{\appr},t\dot{\Phi}_t^{\appr})$ is a tangent vector to $\mathcal M^{\appr}$ at the point  $S_t^{\appr}$. Using the expressions in \eqref{eq:atappr} and \eqref{eq:phitappr}, we obtain that
\begin{multline}\label{eq:dotPhiapp}
\dot{\Phi}_t^{\appr}=\\
\begin{pmatrix}
 0 & e^{-h_t(|f|)}  |f|^{-\frac{1}{2}} \Big(\dot f-(\frac{1}{2}+|f|h_t'(|f|))f\Re\frac{\dot f}{f}\Big)  \\
  e^{h_t(|f|)}|f|^{1/2}  \Re\frac{\dot f}{f} \Big( \frac{1}{2} + |f| h_t'(|f|)\Big) & 0	
 \end{pmatrix} dz
\end{multline}
and 
\begin{equation}\label{eq:dotAapp}
\dot A_t^{\appr} =\left( -2   f_t'(|f|) |f| \Re\frac{\dot f}{f} \Im \frac{df}{f} -2 f_t(|f|) d \Im\frac{\dot f}{f}\right)  \begin{pmatrix}
 i & 0 \\ 0 & -i
 \end{pmatrix}. 
\end{equation}
The pair $(\dot{A}_t^{\appr},t\dot{\Phi}_t^{\appr})$  will in general  not satisfy the Coulomb gauge   condition. In   order to meet it in good approximation we let
\begin{equation*}
\gamma_t:=-\Big( \frac 14+\frac{1}{2}|f|h_t'(|f|)\Big) \Im\frac{\dot f}{f}\begin{pmatrix}i&0\\0&-i\end{pmatrix}=-2f_t(|f|)\Im\frac{\dot f}{f}\begin{pmatrix}i&0\\0&-i\end{pmatrix}
\end{equation*}
and define for $ \dot \Phi_t^{\appr}$ as  in \eqref{eq:dotPhiapp} and  $ \dot A_t^{\appr}$ as in \eqref{eq:dotAapp} 
\begin{multline*}
(\alpha_t,t\varphi_t):=  (\dot A_t^{\appr},  t \dot \Phi_t^{\appr})-i_{S_t^{\appr}}\gamma_t\\
=( \dot A_t^{\appr} -d_{A_t^{\appr}}\gamma_t, t\dot \Phi_t^{\appr}-t[\Phi_t^{\appr}\wedge\gamma_t]).
\end{multline*}
Hence $(\alpha_t,t\varphi_t)$ is a further tangent vector to $\mathcal M^{\appr}$ at $S_t^{\appr}$ but, as one may check, satisfies the Coulomb gauge  condition up to  a much smaller error.  Explicit expressions for $\alpha_t$ and $\varphi_t$ are straightforward to derive. Namely, we obtain that
\begin{multline}\label{eq:correctedhortangentphi}
\varphi_t =\\
\begin{pmatrix} 0&\Big( \frac{1}{2}-|f|h_t'(|f|)\Big)e^{-h_t(|f|)}|f|^{-\frac{1}{2}}\dot f\\\Big( \frac{1}{2}+|f|h_t'(|f|)\Big)e^{h_t(|f|)}|f|^{\frac{1}{2}}\cdot \frac{\dot f}{f}&0   \end{pmatrix}dz
\end{multline}
and
\begin{equation}\label{eq:correctedhortangalpha}
\alpha_t=f_t'(|f|)|f|^{-1} (\dot f\bar\partial\bar f-\bar{\dot f}\partial f)\begin{pmatrix}1&0\\0&-1\end{pmatrix}.
\end{equation}
We note the pointwise convergence $(\alpha_t,\varphi_t)\to(0,\varphi_{\infty})$ as $t\to\infty$ as follows from Lemma \ref{f_t-h_t-function}, where $\varphi_{\infty}$ is as in \eqref{eq:correctedhortangentphi} with $e^{-h_t(|f|)}$ replaced by $1$.
We may extend  $(\alpha_t,\varphi_t)$  smoothly   over $\bigcup_{p\in\mathfrak p}\D(p)$ such that on $\Sigma^{\operatorname{ext}}$ it agrees with  $(0,\varphi_{\infty})$.   To put $(\alpha_t,t\varphi_t)$ into Coulomb gauge, we need to apply a final gauge correction step. The terms obtained in this step are less explicit, but still  admit  estimates which  turn out to be sufficient for our purposes.

\subsubsection*{{\bf Gauge correction}}\label{subsect:gaugecorrect}
We start with   a short digression on how this final gauge correction step is carried out. Recall from \eqref{eq:Cgaugecondition} the definition of the Coulomb gauge condition. It is standard to show that by addition of an appropriate gauge correction term, this condition can always be satisfied.

\begin{lemma}[Coulomb gauge fixing] For each $(\dot A, \dot \Phi)$ there exists a unique  $\xi \in \Omega^0(\Sigma,\su(E))$ such that $(\dot A, \dot \Phi)-i_{(A,\Phi)}\xi=(\dot A - d_A \xi, \dot \Phi - [\Phi \wedge \xi])$ is in Coulomb gauge. It is given as the unique solution of the linear elliptic equation
\begin{equation}\label{eq:gaugefixeq} 
\Delta_A\xi+\Re[\Phi\wedge[\Phi^{\ast}\wedge\xi]]=d_A^{\ast}\dot A+\Re[\Phi^{\ast}\wedge\dot \Phi].
\end{equation}
\end{lemma}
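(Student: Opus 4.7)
The plan is to recognize the equation \eqref{eq:gaugefixeq} as the Euler--Lagrange equation for the Coulomb gauge condition and then to solve it by standard Fredholm theory for a self-adjoint elliptic operator on the compact surface $\Sigma$.

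First I would derive \eqref{eq:gaugefixeq}. Substituting the gauge-modified pair $(\dot A - d_A\xi,\,\dot\Phi-[\Phi\wedge\xi])$ into the Coulomb gauge condition \eqref{eq:Cgaugecondition} gives
\begin{equation*}
d_A^{\ast}(\dot A - d_A\xi) + \Re[\Phi^{\ast}\wedge(\dot\Phi - [\Phi\wedge\xi])] = 0.
\end{equation*}
Since $\xi$ is a $0$-form, $d_A^{\ast} d_A \xi = \Delta_A\xi$, and by the graded Jacobi/symmetry of the bracket (or directly computing with respect to the pointwise inner product) $\Re[\Phi^{\ast}\wedge[\Phi\wedge\xi]] = -\Re[\Phi\wedge[\Phi^{\ast}\wedge\xi]]$ up to the sign convention used in the paper. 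Rearranging yields exactly \eqref{eq:gaugefixeq}.

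Next I would recognize the operator
\begin{equation*}
L\xi := \Delta_A\xi + \Re[\Phi\wedge[\Phi^{\ast}\wedge\xi]]
\end{equation*}
as $i_{(A,\Phi)}^{\ast}\, i_{(A,\Phi)}$, where $i_{(A,\Phi)}$ is the linearized gauge action from the deformation complex \eqref{eq:deformcomplex}. Indeed, integrating by parts against $\xi$,
\begin{equation*}
\langle L\xi,\xi\rangle_{L^2} = \|d_A\xi\|_{L^2}^2 + \|[\Phi\wedge\xi]\|_{L^2}^2 = \|i_{(A,\Phi)}\xi\|_{L^2}^2.
\end{equation*}
This makes $L$ manifestly self-adjoint, nonnegative, and with the same principal symbol as $\Delta_A$, hence second-order linear elliptic on the compact surface $\Sigma$. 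By standard elliptic theory it is therefore Fredholm of index zero.

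Finally I would verify that $\ker L = 0$. If $L\xi = 0$, the identity above forces $i_{(A,\Phi)}\xi = (d_A\xi,[\Phi\wedge\xi]) = 0$, i.e.\ $\xi$ is an infinitesimal stabilizer of $(A,\Phi)$. Under the standing hypothesis that $\det\Phi$ has only simple zeroes, the pair $(A,\Phi)$ is stable, hence irreducible, so this stabilizer is trivial and $\xi = 0$. By Fredholm alternative $L$ is then an isomorphism of $\Omega^0(\Sigma,\su(E))$ onto itself, proving both existence and uniqueness of $\xi$. The only mild obstacle is keeping track of the real-part sign conventions for the bracket adjoint, which are fixed by the choice made after \eqref{eq:Cgaugecondition}; beyond that, the argument is a direct application of self-adjoint Fredholm theory combined with irreducibility.
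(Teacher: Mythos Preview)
Your argument is correct and is exactly the standard one: identify the left-hand side of \eqref{eq:gaugefixeq} with $i_{(A,\Phi)}^{\ast}i_{(A,\Phi)}$, observe that this is a nonnegative self-adjoint elliptic operator on the compact surface, and conclude invertibility from the triviality of $\ker i_{(A,\Phi)}$ for irreducible $(A,\Phi)$. The paper itself does not supply a proof of this lemma; it simply remarks that ``it is standard to show'' and records the equation, so there is nothing to compare beyond noting that your write-up is precisely the intended standard argument (and indeed the paper later computes $D_{(A,t\Phi)}^0=i_{(A,t\Phi)}^{\ast}i_{(A,t\Phi)}$ explicitly, confirming your identification).
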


We carry out the remaining  gauge correction  step for the normalized tangent vector $(\dot A,\dot\Phi)=(t^{-1}\alpha_t,\varphi_t)$, the normalization being chosen in order to obtain uniformly bounded $L^2$ norms. We hence need to   estimate the solution $\xi_t$ of Eq.~\eqref{eq:gaugefixeq} in the case where $(A,\Phi)=S_t^{\appr}=(A_t^{\appr},t\Phi_t^{\appr})$ is an approximate solution as in \eqref{eq:atappr} and \eqref{eq:phitappr}. We abbreviate the right-hand side of this equation as
\begin{equation*}\label{eq:errorexpression}
E_t:=t^{-1} d_{A_t^{\appr}}^{\ast}\alpha_t + t\Re[(\Phi_t^{\appr})^* \wedge \varphi_t].
\end{equation*}
We let $\xi_t$ denote its unique solution. The resulting   tangent vector to $S_t^{\appr}$ in Coulomb gauge therefore is
\begin{equation*}
X_t:=X_t(\dot q):= (t^{-1}\alpha_t,\varphi_t)- i_t\xi_t.
\end{equation*}
We note that for $t=\infty$ the gauge fixing term vanishes, and hence $X_{\infty}$ equals the above vector $(0,\varphi_{\infty}$).

\begin{definition}\label{def:inducedtangentvector}
We call $X_t(\dot q)$ the  tangent vector at $S_t^{\operatorname{app}}$  induced by the holomorphic quadratic differential $\dot q$.
\end{definition}

It remains to consider  the convergence properties of the family $t\mapsto X_t(\dot q)$ in the limit $t\to\infty$ and hence to derive uniform estimates on the gauge correction term $i_t \xi_t$. These are based on the following proposition.

\begin{proposition}\label{prop:errorest}
There is a constant $C$ such that  $E_t$ satisfies the uniform estimate
 
\begin{equation*}\label{eq:c0bounderror}
\|E_t\|_{C^0(\Sigma)}\leq Ct 
\end{equation*}
for all $t\geq1$.
\end{proposition}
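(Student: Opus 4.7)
The plan is to estimate $E_t$ pointwise by splitting $\Sigma=\Sigma^{\exterior}\cup\bigcup_{p\in\mathfrak p}\D(p)$ and treating each region separately.

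On $\Sigma^{\exterior}$, the smooth extension procedure of \textsection\ref{subsect:apprtangentspaces} gives $\alpha_t\equiv0$ and $\varphi_t\equiv\varphi_\infty$, so $E_t=t\,\Re[(\Phi_t^{\appr})^\ast\wedge\varphi_\infty]$. I would first verify the pointwise identity $[\Phi_\infty^\ast\wedge\varphi_\infty]=0$ on $\Sigma^\times$. Indeed, the matrix products produce diagonal, traceless $(1,1)$-forms whose $(1,1)$-entry collapses to a multiple of $r^{-1}\bar f\dot f-r\dot f/f$; this vanishes because $r^{-1}\bar f=r/f$, which is a direct consequence of $|f|^2=r^2$. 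Second, by the desingularization theorem and Lemma~\ref{f_t-h_t-function}(iv), $\Phi_t^{\appr}\to\Phi_\infty$ exponentially in $C^0(\Sigma^{\exterior})$. Combining these two observations gives $|E_t|\le Cte^{-ct}$ on $\Sigma^{\exterior}$, far stronger than the claimed bound.

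On a disk $\D(p)$, I use the local coordinate $z$ with $f=-z$ (so $|f|=r$) and substitute \eqref{eq:atappr}--\eqref{eq:correctedhortangalpha} into $E_t$. A direct matrix computation shows that $[(\Phi_t^{\appr})^\ast\wedge\varphi_t]$ is diagonal, and its $(1,1)$-entry is a multiple of $u\cdot(\sinh(2h_t)+2rh_t'(r)\cosh(2h_t))$ with $u=r\dot f/f$ bounded. Because $A_t^{\appr}$ and $\alpha_t$ take values in the same diagonal $\mathrm{diag}(1,-1)$-direction, the connection commutator inside $d_{A_t^{\appr}}^\ast\alpha_t$ vanishes, and only the flat divergence $d^\ast\alpha_t$ contributes, producing terms in $g(r)=f_t'(r)/r$ and $g'(r)$. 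The cancellation that rescues the estimate comes from two identities built into the construction: the defining relation $f_t=\tfrac18+\tfrac14\,rh_t'$, giving $rh_t'(r)=4f_t-\tfrac12$, combined with the Painlev\'e III ODE, yields
\begin{equation*}
\sinh(2h_t)=\frac{f_t'(r)}{2t^2r^2}.
\end{equation*}
Inserting these into the bracket term replaces the singular $\sinh$- and $\cosh$-factors by expressions in $f_t$ and $f_t'$ that match the structure of the divergence term, and the leading $r^{-1}$-singularities cancel. What remains is controlled uniformly by Lemma~\ref{f_t-h_t-function}(iii) and (v), carrying a prefactor of $t$ from the global Higgs scaling. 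Assembled with the exterior estimate this yields $\|E_t\|_{C^0(\Sigma)}\le Ct$.

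The main obstacle is the pointwise cancellation argument on $\D(p)$: one has to match the complex coefficients of the bracket term (after the projection to $\mf{su}(E)$) to those of $d^\ast\alpha_t$ without losing track of signs, wedge orientations, or the distinction between the regime $r\gtrsim t^{-2/3}$ (where $\rho=\tfrac83tr^{3/2}\gtrsim 1$ and $h_t$ is bounded) and the regime $r\ll t^{-2/3}$ (where $h_t\sim-\tfrac12\log r+O(1)$ and each individual term in $E_t$ is singular of order $r^{-1}$). Once this careful bookkeeping is done, the final bound is a straightforward application of the estimates already provided by Lemma~\ref{f_t-h_t-function}.
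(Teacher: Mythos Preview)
The paper does not actually prove this proposition; immediately after the statement the text reads ``The proof of this proposition is given in \cite{msww17}.'' There is therefore no in-paper argument to compare your attempt against.

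That said, your strategy is the natural one and almost certainly tracks what the deferred proof contains. The splitting into $\Sigma^{\exterior}$ and the disks $\D(p)$ is forced by the construction of $(\alpha_t,\varphi_t)$. On $\Sigma^{\exterior}$ your verification that $[\Phi_\infty^\ast\wedge\varphi_\infty]=0$ (via $|f|/f=\bar f/|f|$) is correct, and combined with the exponential convergence $\Phi_t^{\appr}\to\Phi_\infty$ this indeed gives $|E_t|\le Cte^{-\beta t}$ there, consistent with the paper's remark just below the proposition that $E_t=\mathcal O(e^{-\beta t})$ on $\Sigma^{\exterior}$. On $\D(p)$ your computation of the diagonal entry of $[(\Phi_t^{\appr})^\ast\wedge\varphi_t]$ as $u\,(\sinh(2h_t)+2rh_t'\cosh(2h_t))$ with $u=r\dot f/f$ bounded is right, as is the observation that the connection part of $d_{A_t^{\appr}}^\ast\alpha_t$ drops out because both $A_t^{\appr}-A_H$ and $\alpha_t$ are diagonal. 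The two identities you invoke, $rh_t'=4f_t-\tfrac12$ and $\sinh(2h_t)=f_t'/(2t^2r^2)$ from the Painlev\'e ODE, are exactly the mechanism that kills the apparent $r^{-1}$ singularity at $p$.

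What you label the ``main obstacle''---the explicit matching of coefficients between the bracket term and $d^\ast\alpha_t$ across the two regimes $\rho\lesssim 1$ and $\rho\gtrsim 1$---is indeed where the content lies, and you have not carried it out. Your sketch correctly isolates all the ingredients, but as written it stops short of an actual proof; the claimed $C^0$ bound only follows once that cancellation is made quantitative.
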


The proof of this proposition is given in \cite{msww17}. Of relevance in the following will further  be that 
\begin{equation*}\label{eq:errorexterior}
E_t=\mathcal O(e^{-\beta t})\quad\textrm{on}\quad \Sigma^{\operatorname{ext}}
\end{equation*}
for some constant $\beta>0$, and that  restricted to each disk $\D(p)$, $p\in\mathfrak p$, 
\begin{equation*}
E_t=m_t\begin{pmatrix}1&0\\0&-1\end{pmatrix}
\end{equation*}
for some complex-valued function $m_t$ which we do not need to specify here. It is then shown in \cite{msww17} that the corresponding gauge correction terms $i_t\xi_t$ decay at   rate $t^{-\frac{1}{3}}$ to $0$ in $L^2$ as $t\to \infty$, with a more refined estimate on   $\xi_t$ to be given in Lemma \ref{lem:refinedestgaugecorr}. One thus arrives at the following result.

\begin{lemma}\label{thm:convhorizontal}
There is  a constant $C>0$ which does not depend on $t$ such that
\begin{equation}\label{eq:convhorizontal}
\|X_t-(0,\varphi_{\infty})\|_{L^2(\Sigma)}\leq Ct^{-\frac{1}{3}}
\end{equation}
for all $t\geq1$.
\end{lemma}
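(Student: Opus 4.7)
The plan is to decompose
\begin{equation*}
X_t - (0,\varphi_{\infty}) \;=\; (t^{-1}\alpha_t,\,0) \;+\; (0,\,\varphi_t - \varphi_{\infty}) \;-\; i_t\xi_t,
\end{equation*}
and to bound each of the three summands separately in $L^2(\Sigma)$. I will split the integration as $\Sigma = \Sigma^{\operatorname{ext}} \cup \bigcup_{p \in \mathfrak p}\D(p)$ for the first two pieces, and appeal to \cite{msww17} for the third.

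On the exterior region $\Sigma^{\operatorname{ext}}$, Lemma \ref{f_t-h_t-function}(ii),(iv) gives exponential decay of $h_t$, $rh_t'(r)$ and $f_t - 1/8$ uniformly on $\{r \geq r_0\}$, so the explicit formulas \eqref{eq:correctedhortangentphi}--\eqref{eq:correctedhortangalpha} yield directly
\begin{equation*}
\|t^{-1}\alpha_t\|_{L^2(\Sigma^{\operatorname{ext}})} + \|\varphi_t - \varphi_{\infty}\|_{L^2(\Sigma^{\operatorname{ext}})} \leq Ce^{-\beta t}.
\end{equation*}
On each disk $\D(p)$, with $q = -z\,dz^2$ and $|f| = r$, I pass to the rescaled radial variable $\rho = \tfrac{8}{3}tr^{3/2}$. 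By construction $h_t(r) = \psi(\rho)$, and a short computation using $f_t = \tfrac{1}{8} + \tfrac{1}{4}rh_t'$ shows that $f_t(r) = G(\rho)$ for a fixed, $t$-independent function $G$. Substituting into \eqref{eq:correctedhortangentphi}--\eqref{eq:correctedhortangalpha}, the entries of $t^{-1}\alpha_t$ and of $\varphi_t - \varphi_{\infty}$ become products of universal bounded functions of $\rho$ with $\dot f$ and explicit $t$-powers, while the area element transforms as $r\,dr\,d\theta = c\,t^{-4/3}\rho^{1/3}\,d\rho\,d\theta$. The Painlev\'e asymptotics \eqref{proph} guarantee convergence of the resulting $\rho$-integrals near $\rho = 0$ (polynomial behavior) and near $\rho = \infty$ (exponential decay), yielding
\begin{equation*}
\|t^{-1}\alpha_t\|_{L^2(\D(p))}^2 + \|\varphi_t - \varphi_{\infty}\|_{L^2(\D(p))}^2 \leq Ct^{-2/3}|\dot f(p)|^2,
\end{equation*}
so each of the two summands is of size $\mathcal{O}(t^{-1/3})$.

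For the gauge-correction term $i_t\xi_t$, I invoke the gauge-fixing equation \eqref{eq:gaugefixeq} with right-hand side $E_t$. Proposition \ref{prop:errorest} together with $E_t = \mathcal{O}(e^{-\beta t})$ on $\Sigma^{\operatorname{ext}}$ supplies the necessary quantitative control of the source. The elliptic operator
\begin{equation*}
L_t\xi := \Delta_{A_t^{\appr}}\xi + \Re[t\Phi_t^{\appr}\wedge[(t\Phi_t^{\appr})^{*}\wedge\xi]]
\end{equation*}
has potential of size $t^2$ on $\Sigma^{\operatorname{ext}}$ but degenerates inside the disks. The weighted estimate developed in \cite{msww17} inverts $L_t$ against $E_t$ and produces the bound $\|i_t\xi_t\|_{L^2} \leq Ct^{-1/3}$; combining with the previous two estimates gives \eqref{eq:convhorizontal}.

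The main obstacle lies in this last step: the rate $t^{-1/3}$ is sharp, and to obtain it one must treat an operator with a large ($\sim t^2$) potential that degenerates at the zero set $\mathfrak p$. The matching of the interior fiducial regime on scale $r \sim t^{-2/3}$ with the exterior limiting geometry requires the scale-dependent elliptic analysis of \cite{msww17}; the two other pieces are comparatively elementary, following by direct substitution in the $\rho$-coordinate together with the Painlev\'e III asymptotics \eqref{proph}.
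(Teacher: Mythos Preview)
Your proposal is correct and follows essentially the same approach the paper outlines: the paper does not give a self-contained proof of this lemma but notes the pointwise convergence $(\alpha_t,\varphi_t)\to(0,\varphi_\infty)$ from Lemma~\ref{f_t-h_t-function}, states that the gauge correction $i_t\xi_t$ decays at rate $t^{-1/3}$ in $L^2$ by \cite{msww17}, and concludes. Your decomposition into $(t^{-1}\alpha_t,0)+(0,\varphi_t-\varphi_\infty)-i_t\xi_t$, with the explicit $\rho$-rescaling on the disks for the first two pieces and the appeal to \cite{msww17} for the third, is exactly this strategy with the disk computation spelled out in slightly more detail than the paper itself provides.
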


\section{Sectional curvatures of the $L^2$ metric}

\subsection{Generalities} 

The study of the curvature properties  of the Higgs bundle moduli space  fits into the following more general setup considered by Jost and Peng in \cite{JP}. Let  $\mathcal M$ be a smooth Banach manifold, endowed with a smooth Riemannian metric $G$, and acted on isometrically by   a Banach Lie group $\mathcal G$ with Banach Lie algebra $\mathfrak g$. Let  $V$ be a Banach space  and   $\phi\colon\mathcal M\to V$ be a  smooth map, and suppose that  the level set $\phi^{-1}(0)$ is invariant under the action of $\mathcal G$. Then the smooth part of the quotient $\mathcal M_0:=\phi^{-1}(0)/\mathcal G$ is again a Banach manifold.  It  inherits from $(\mathcal M,G)$ a Riemannian metric $G_0$ in such a way that the canonical projection $\pi\colon\phi^{-1}(0)\to\mathcal M_0$ is a Riemannian submersion. The main objective of  \cite{JP} is to derive a formula for the sectional curvatures of $G_0$ in terms of that of $G$, the derivatives of $\phi$ and the action by $\mathcal G$. We make the assumption (which is satisfied in the situation to be considered below)  that for every $p\in \phi^{-1}(0)$ the sequence of maps
\begin{equation*}
0\to \mathfrak g\stackrel{i_p}{\longrightarrow} T_p\mathcal M \stackrel{j_p}{\longrightarrow} V\to 0
\end{equation*}
is exact, where (with $t\mapsto g_t$ denoting the one-parameter subgroup generated by $X\in\mathfrak g$)
\begin{equation*}
i\colon \mathfrak g\to \Omega^0(T\mathcal M),\quad i_p(X)=\left.\frac{d}{dt}\right|_{t=0} g_t(p)
\end{equation*}
and
\begin{equation*}
j:= d\phi\colon T\mathcal M\to V.
\end{equation*}
We hence may uniquely identify a tangent vector at the point $[p]\in\mathcal M_0$   with some  $\alpha\in\ker i_p^{\ast}\cap \ker j_p$. Here the adjoint is taken with respect to some fixed inner product on $\mathfrak g$. We denote by $G_p^0$ and $G_p^2$ the inverses of the associated Laplacians $i_p^ {\ast}i_p$ and $j_pj_p^{\ast}$, which exist by exactness of the above sequence. Furthermore, for $\gamma\in T_p\mathcal M$ we define
\begin{equation*}
P_{\gamma}:=(d_{\gamma}i)(p)\colon \mathfrak g\to T_p\mathcal M
\end{equation*}
and
\begin{equation*}
Q_{\gamma}:=(d_{\gamma}j)(p)\colon  T_p\mathcal M\to V.
\end{equation*}
After identifying $\mathfrak g^{\ast}$ with $\mathfrak g$ via the above chosen inner product, the adjoint of the map $P_{\gamma}$ is the map $P_{\gamma}^{\ast}\colon T_p\mathcal M\to\mathfrak g$. The following theorem relates  $\langle R(\alpha,\beta)\gamma,\delta\rangle_{G_0}$, the curvature tensor  of $(\mathcal M_0,G_0)$ evaluated on the tuple $(\alpha,\beta,\gamma,\delta)$ of  tangent vectors  at $[p]$,   to that of $(\mathcal M,G)$.

\begin{theorem}(\cite[Theorem 2.3]{JP}).\label{thm:seccurv}
The Riemann curvature tensor  of $\mathcal M_0$ with respect to the induced metric $G_0$ is
\begin{multline*}
\langle R(\alpha,\beta)\gamma,\delta\rangle_{G_0} =\langle R^{\mathcal M}(\alpha,\beta)\gamma,\delta\rangle_G+\langle G_p^0P_{\alpha}^{\ast}\delta, P_{\beta}^{\ast}\gamma\rangle_{\mathfrak g}-\langle G_p^0P_{\beta}^{\ast}\delta, P_{\alpha}^{\ast}\gamma\rangle_{\mathfrak g} \\
+2\langle G_p^0P_{\delta}^{\ast}\gamma, P_{\alpha}^{\ast}\beta\rangle_{\mathfrak g}+ \langle G_p^2Q_{\beta}\gamma, Q_{\alpha}\delta\rangle_V -  \langle G_p^2Q_{\alpha}\gamma, Q_{\beta}\delta\rangle_V, 
\end{multline*}
where $ R^{\mathcal M}$ is the curvature tensor of $(\mathcal M,G)$.
\end{theorem}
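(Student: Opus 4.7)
The plan is to derive the formula by a two-stage reduction. The quotient $\mathcal M_0 = \phi^{-1}(0)/\mathcal G$ factors as an embedding $\mathcal N := \phi^{-1}(0) \hookrightarrow \mathcal M$ followed by a Riemannian submersion $\pi\colon\mathcal N\to\mathcal M_0$, so I would first apply the Gauss equation to compare $R^{\mathcal M}$ with the intrinsic curvature $R^{\mathcal N}$, and then apply O'Neill's fundamental equation to compare $R^{\mathcal N}$ with $R^{\mathcal M_0}$. The exactness hypothesis supplies the orthogonal decomposition $T_p\mathcal M = \mathcal H_p \oplus \im(i_p) \oplus \im(j_p^*)$ with $T_{[p]}\mathcal M_0 \cong \mathcal H_p := \ker(i_p^*) \cap \ker(j_p)$, together with the explicit orthogonal projectors $i_p G_p^0 i_p^*$ onto $\im(i_p)$ and $j_p^* G_p^2 j_p$ onto $\im(j_p^*)$. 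These projectors are what allow all subsequent tensorial identities to be expressed purely in terms of $P$, $Q$, $G_p^0$, $G_p^2$.

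For the Gauss step I would extend each of $\alpha,\beta,\gamma,\delta$ to local vector fields $\tilde\alpha,\ldots,\tilde\delta$ on $\mathcal M$ that are tangent to $\mathcal N$ along $\mathcal N$. Differentiating the identity $j\tilde\gamma\equiv 0$ on $\mathcal N$ along $\tilde\alpha$ gives $j_p(\nabla^{\mathcal M}_{\tilde\alpha}\tilde\gamma) = -Q_\alpha\gamma$, identifying the second fundamental form as $II(\alpha,\gamma) = -j_p^* G_p^2 Q_\alpha\gamma$. Substituting into the Gauss equation and collapsing $\langle j_p^*u, j_p^*v\rangle_{T_p\mathcal M} = \langle G_p^2 u, v\rangle_V$ (using $j_p j_p^* G_p^2 = \mathrm{id}_V$), then invoking symmetry of $G_p^2$, yields exactly the two $G_p^2$-terms appearing on the right-hand side of Theorem \ref{thm:seccurv}.

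For the O'Neill step I would additionally require $i^*\tilde\alpha \equiv 0$ along $\mathcal N$, so the extensions are horizontal lifts of vector fields on $\mathcal M_0$. Differentiating the identity $\langle \tilde\alpha, iX\rangle \equiv 0$ for each $X \in \mathfrak g$ along $\tilde\beta$ yields $\langle \nabla^{\mathcal N}_{\tilde\beta}\tilde\alpha, i_p X\rangle = -\langle P_\beta^*\alpha, X\rangle_{\mathfrak g}$, identifying the vertical component of $\nabla^{\mathcal N}_{\tilde\beta}\tilde\alpha$ as $-i_p G_p^0 P_\beta^*\alpha$ and producing the O'Neill integrability tensor $A_\alpha\beta = \tfrac{1}{2} i_p G_p^0(P_\beta^*\alpha - P_\alpha^*\beta)$. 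Substituting into
\begin{equation*}
R^{\mathcal N}(\alpha,\beta,\gamma,\delta) = R^{\mathcal M_0}(\alpha,\beta,\gamma,\delta) - 2\langle A_\alpha\beta, A_\gamma\delta\rangle + \langle A_\alpha\gamma, A_\beta\delta\rangle - \langle A_\beta\gamma, A_\alpha\delta\rangle
\end{equation*}
and collapsing each pairing via $\langle i_p u, i_p v\rangle_{T_p\mathcal M} = \langle u, (G_p^0)^{-1} v\rangle_{\mathfrak g}$ yields the three $G_p^0$-terms in Theorem \ref{thm:seccurv}, with the factor $2$ arising from the first O'Neill term.

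The main obstacle will be the sign and antisymmetrization bookkeeping: neither $P_\gamma X$ nor $Q_\gamma Y$ is a priori symmetric in its two arguments, and the identities coming from differentiating the constraints must be antisymmetrized correctly before they can be substituted into the Gauss and O'Neill pairings. Careful accounting is then required to verify that the six resulting cross terms combine precisely into the three expressions $\langle G_p^0 P_\alpha^*\delta, P_\beta^*\gamma\rangle$, $-\langle G_p^0 P_\beta^*\delta, P_\alpha^*\gamma\rangle$, and $2\langle G_p^0 P_\delta^*\gamma, P_\alpha^*\beta\rangle$ stated in the theorem. The argument carries over to the Banach manifold setting provided $i_p^*i_p$ and $j_p j_p^*$ are invertible with bounded inverses $G_p^0, G_p^2$, which is guaranteed by the assumed exactness of the sequence $0 \to \mathfrak g \to T_p\mathcal M \to V \to 0$.
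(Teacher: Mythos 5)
The paper does not prove this statement at all --- it is quoted from Jost--Peng \cite[Theorem 2.3]{JP}, and the only guidance given is the remark immediately afterwards that the $G_p^0$-terms are O'Neill contributions from the submersion $\phi^{-1}(0)\to\mathcal M_0$ and the $G_p^2$-terms are Gauss contributions from the embedding $\phi^{-1}(0)\hookrightarrow\mathcal M$. Your two-stage plan is exactly that decomposition, and the key identifications are right: differentiating $j\tilde\gamma\equiv0$ does give $II(\alpha,\gamma)=-j_p^*G_p^2Q_\alpha\gamma$, which after collapsing with $j_pj_p^*G_p^2=\mathrm{id}_V$ produces precisely $\langle G_p^2Q_\beta\gamma,Q_\alpha\delta\rangle-\langle G_p^2Q_\alpha\gamma,Q_\beta\delta\rangle$; and differentiating $\langle\tilde\alpha,iX\rangle\equiv0$ gives the vertical component $-i_pG_p^0P_\beta^*\alpha$ and hence $A_\alpha\beta=\tfrac12 i_pG_p^0(P_\beta^*\alpha-P_\alpha^*\beta)=-i_pG_p^0P_\alpha^*\beta$, with $\langle A_\alpha\beta,A_\gamma\delta\rangle=\langle G_p^0P_\alpha^*\beta,P_\gamma^*\delta\rangle$.

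The one concrete error is the sign pattern in the O'Neill identity you quote. Specializing your version to $(\alpha,\beta,\gamma,\delta)=(X,Y,Y,X)$ gives $K^{\mathcal M_0}=K^{\mathcal N}-|A_XY|^2$, whereas O'Neill's theorem gives $K^{\mathcal M_0}=K^{\mathcal N}+3|A_XY|^2$ (a convention-independent statement about sectional curvatures), so your formula cannot reproduce the coefficient $+2$ on $\langle G_p^0P_\delta^*\gamma,P_\alpha^*\beta\rangle$ nor the $3\langle G_0P_X^*Y,P_X^*Y\rangle$ in \eqref{eq:sectionalcurvature}. The correct polarization is
\begin{equation*}
R^{\mathcal N}(\alpha,\beta,\gamma,\delta) = R^{\mathcal M_0}(\alpha,\beta,\gamma,\delta) + 2\langle A_\alpha\beta, A_\gamma\delta\rangle - \langle A_\beta\gamma, A_\alpha\delta\rangle + \langle A_\alpha\gamma, A_\beta\delta\rangle,
\end{equation*}
i.e.\ only your first term has the wrong sign. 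With that fix, and using skew-symmetry of $P^*$ and symmetry of $G_p^0$, the three $G_p^0$-terms come out exactly as stated, so the slip is a correctable bookkeeping error inside an otherwise sound and faithful reconstruction of the Jost--Peng argument.
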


The terms involving $G_p^0$ have a natural interpretation as O'Neill-type contributions to the curvature coming from  the Riemannian submersion $\phi^{-1}(0)\to \mathcal M_0$, while the terms involving $G_p^2$ represent Gau{\ss}-type contributions associated with the embedding $\phi^{-1}(0)\hookrightarrow \mathcal M$.
\medskip\\ 
Our goal is to analyze the sectional curvatures of $(\mathcal M,G_{L^2})$ by employing the curvature formula of Theorem \ref{thm:seccurv}. In our setup, to be described next, the ambient metric $G$ is flat. Thus the sectional curvature of $G_0$ in direction of the $2$-plane spanned by the  orthogonal  frame $\{X ,Y \}$ is  
\begin{multline}\label{eq:sectionalcurvature}
K(X,Y)= \langle R(X,Y)Y,X\rangle_{G_0}\\
=  3\langle G_0P_X^{\ast}Y,P_X^{\ast}Y\rangle_{\mathfrak g}+\langle G_2Q_XX,Q_YY\rangle_{V}-\langle G_2Q_XY,Q_XY\rangle_{V},
\end{multline}
using that (as shown in \cite{JP}) the map $Q$ is symmetric and $P^{\ast}$ is skew-symmetric.

\subsection{The elliptic complex and the associated Laplacians}\label{subsect:Laplacians} 
 
Coming back to the moduli space $\mathcal M$ of solutions to the self-duality equations, the role of the map $\phi$ above is taken by the nonlinear Hitchin map
\begin{align*}\label{eq:nonlinHitchin}
\mathcal H\colon\mathcal A(E,h)\times \Omega^{1,0}(\Sigma,\mathfrak{sl}(E))\mapsto\Omega^2(\Sigma,\mathfrak{su}(E))\oplus\Omega^2(\Sigma,\mathfrak{sl}(E)),\\
  \mathcal H(A,t\Phi):=\begin{pmatrix}F_A^{\perp}+t^2[\Phi\wedge\Phi^{\ast}]\\\bar\partial_A\Phi\end{pmatrix}.
\end{align*}
Its linearization at $(A,t\Phi)$ is the map $L_{(A,t\Phi)}$  as in \eqref{eq:linearizedhitchinmap}, which we write in the form
\begin{eqnarray*}
L_{(A,t\Phi)}=\begin{pmatrix} d_A&tR_{\Phi}\\t S_{\Phi}& \bar \partial_A\end{pmatrix}.
\end{eqnarray*}
Here we denote
\begin{eqnarray*}
R_{\Phi}\colon\Omega^{1,0}(\Sigma,\mathfrak{sl}(E))\to\Omega^2(\Sigma,\mathfrak{su}(E)),\qquad R_{\Phi}\varphi=[\Phi\wedge\varphi^{\ast}]+[\Phi^{\ast}\wedge\varphi]
\end{eqnarray*}
and
\begin{eqnarray*}
S_{\Phi}\colon\Omega^1(\Sigma,\mathfrak{su}(E))\to\Omega^2(\Sigma,\mathfrak{sl}(E)),\qquad S_{\Phi}\alpha=[ \Phi\wedge \alpha^{0,1}].
\end{eqnarray*}
Recall that the infinitesimal action of the group of unitary gauge transformations at $(A,t\Phi)$ is given by  
\begin{align*}
i_{(A,t\Phi)}\colon\Omega^0(\Sigma,\mathfrak{su}(E))\to\Omega^1(\Sigma, \mathfrak{su}(E))\oplus\Omega^{1,0}(\Sigma, \mathfrak{sl}(E)),\\
i_{(A,t\Phi)}\gamma=(d_A\gamma,[t\Phi\wedge\gamma]).
\end{align*}
The maps $i_{(A,t\Phi)}$  and $L_{(A,t\Phi)}$  combine to give an elliptic complex as in \eqref{eq:deformcomplex}.
The induced Laplace operators in degree $0$ and $2$ are
\begin{equation}\label{eq:laplace0}
D_{(A,t\Phi)}^0:= i_{(A,t\Phi)}^{\ast}i_{(A,t\Phi)}\colon\Omega^0(\Sigma,\mathfrak{su}(E))\to\Omega^0(\Sigma,\mathfrak{su}(E))
\end{equation}
and
\begin{multline}\label{eq:laplace2}
D_{(A,t\Phi)}^2:= L_{(A,t\Phi)}L_{(A,t\Phi)}^{\ast}\colon\\
\Omega^2(\Sigma,\mathfrak{su}(E)) \oplus\Omega^2(\Sigma,\mathfrak{sl}(E))\to\Omega^2(\Sigma,\mathfrak{su}(E)) \oplus\Omega^2(\Sigma,\mathfrak{sl}(E)).
\end{multline}
Finally, the above operators $P$ and $Q$  read in the present context 
\begin{multline*}
P_{(\alpha,\varphi)}\colon \Omega^0(\Sigma,\mathfrak{su}(E))\to \Omega^1(\Sigma,\mathfrak{su}(E))\oplus \Omega^{1,0}(\Sigma,\mathfrak{sl}(E)),\\
P_{(\alpha,\varphi)}\gamma=\left.\tfrac{d}{dt}\right|_{\varepsilon=0}i_{(A+\varepsilon\alpha,t\Phi+\varepsilon\varphi)}=([\alpha\wedge\gamma],[\varphi\wedge\gamma])
\end{multline*}
and
\begin{multline*}
\label{eq:Q}
Q_{(\alpha,\varphi)}\colon \Omega^1(\Sigma,\mathfrak{su}(E))\oplus \Omega^{1,0}(\Sigma,\mathfrak{sl}(E))\to \Omega^2(\Sigma,\mathfrak{su}(E))\oplus \Omega^2(\Sigma,\mathfrak{sl}(E)),\\
 Q_{(\alpha,\varphi)}(\beta,\psi)=\left.\tfrac{d}{d\varepsilon}\right|_{\varepsilon=0}L_{(A+\varepsilon\alpha,t\Phi+\varepsilon\varphi)}(\beta,\psi)\\
 =([\alpha\wedge\beta]+[\varphi\wedge\psi^{\ast}]+[\varphi^{\ast}\wedge\psi],[\alpha^{0,1}\wedge\psi]+[\beta^{0,1}\wedge\varphi]).
\end{multline*}
By computations similar to those  in the next section, the adjoint of   $P_{(\alpha,\varphi)}$ is the operator 
\begin{multline*}
P_{(\alpha,\varphi)}^{\ast}\colon \Omega^1(\Sigma,\mathfrak{su}(E))\oplus \Omega^{1,0}(\Sigma,\mathfrak{sl}(E))\to  \Omega^0(\Sigma,\mathfrak{su}(E)),\\
 P_{(\alpha,\varphi)}^{\ast}(\beta,\psi)= \ast [\ast\alpha\wedge\beta]  +\frac{i}{2}\ast[\varphi\wedge\psi^{\ast}]-\frac{i}{2}\ast[\varphi^{\ast}\wedge\psi].
\end{multline*}

\subsubsection*{{\bf Computation of the adjoint operators}}
We wish to write down the operators $D_{(A,t\Phi)}^0$ and $D_{(A,t\Phi)}^2$ in more explicit terms, and thus need to calculate various adjoints with respect to the hermitian inner product on $\mathfrak{sl}(2,\C)$ given by $\langle A,B\rangle=\Re\Tr(AB^{\ast})$. Starting with  $i_{(A,t\Phi)}$, its  adjoint  is the operator
\begin{eqnarray*}
\nonumber i_{(A,t\Phi)}^{\ast}\colon\Omega^1(\Sigma,\mathfrak{su}(E))\oplus\Omega^{1,0}(\Sigma,\mathfrak{sl}(E))\to \Omega^0(\Sigma,\mathfrak{su}(E)),\\
\label{eq:iast} i_{(A,t\Phi)}^{\ast}(\alpha,\varphi)=d_A^{\ast}\alpha+\frac{i}{2}\ast[t\Phi\wedge\varphi^{\ast}]-\frac{i}{2}\ast[t\Phi^{\ast}\wedge\varphi].
\end{eqnarray*}
The   adjoint of $L_{(A,\Phi)}$ is
\begin{eqnarray*}
L_{(A,t\Phi)}^{\ast}=\begin{pmatrix}d_A^{\ast}&tS_{\Phi}^{\ast}\\tR_{\Phi}^{\ast}&\bar\partial_A^{\ast}\end{pmatrix}.
\end{eqnarray*}
To compute the entries $R_{\Phi}^{\ast}$ and $S_{\Phi}^{\ast}$ we use the identities
\begin{equation*}
\ast (dz\wedge d\bar z)=-2i,\qquad\ast dz=-idz,\qquad \ast d\bar z=id\bar z.
\end{equation*}
Then we calculate the adjoints of the operators
\begin{align*}
M_{\Phi}\colon \Omega^{0,1}(\Sigma,\mathfrak{sl}(E))\to\Omega^{1,1}(\Sigma,\mathfrak{sl}(E)),\quad \psi\mapsto[\Phi\wedge\psi],\\
M_{\Phi^{\ast}}\colon \Omega^{1,0}(\Sigma,\mathfrak{sl}(E))\to\Omega^{1,1}(\Sigma,\mathfrak{sl}(E)),\quad \psi\mapsto[\Phi^{\ast}\wedge\psi].
\end{align*}
Writing in local coordinates  $\Phi=\varphi\, dz$, $\sigma=\underline\sigma\, dz\wedge d\bar z$ and $M_{\Phi}^{\ast}\sigma=\underline\tau\, d\bar z$ we find that for all $\psi=\underline\psi \,d\bar z$ the defining equation $\langle M_{\Phi}^{\ast}\sigma,\psi\rangle=\langle\sigma,M_{\Phi}\psi\rangle$ for the adjoint is equivalent to
\begin{equation*}
2\langle\underline\tau,\underline\psi\rangle=4\langle\underline\sigma,[\varphi,\underline\psi]\rangle,
\end{equation*}
hence
\begin{equation*}
\langle\underline\tau,\underline\psi\rangle=-2\langle[\underline\sigma,\varphi^{\ast}],\underline\psi\rangle,
\end{equation*}
so that
\begin{equation*}
\underline\tau=2[\varphi^{\ast},\underline\sigma].
\end{equation*}
Since $\underline\sigma=\frac{i}{2}\ast\sigma$ it follows that
\begin{equation*}
M_{\Phi}^{\ast}\sigma=i[\Phi^{\ast}\wedge \ast\sigma].
\end{equation*}
Similarly,
\begin{equation*}
M_{\Phi^{\ast}}^{\ast}\sigma=-i[\Phi \wedge \ast\sigma].
\end{equation*}
Let $i_{\C}\colon \Omega^1(\Sigma,\mathfrak{su}(E))\to \Omega^1(\Sigma,\mathfrak{sl}(E))$ and $i^{0,1}\colon \Omega^{0,1}(\Sigma,\mathfrak{sl}(E))\to \Omega^1(\Sigma,\mathfrak{sl}(E))$ be the natural inclusion maps, and let $\pi_{\C}$ and $\pi^{0,1}$ denote their adjoints. Then writing  $S_{\Phi}=M_{\Phi}\circ\pi^{0,1}\circ i_{\C}$ it follows that 
\begin{equation*}
S_{\Phi}^{\ast}=\pi_{\C}\circ i^{0,1}\circ M_{\Phi}^{\ast}\colon \Omega^{1,1}(\Sigma,\mathfrak{sl}(E))\to\Omega^1(\Sigma,\mathfrak{su}(E)),
\end{equation*}
hence
\begin{equation*}
S_{\Phi}^{\ast}\sigma=\frac{i}{2}[\Phi^{\ast}\wedge \ast\sigma] +\frac{i}{2}[\Phi^{\ast}\wedge \ast\sigma]^{\ast} =\frac{i}{2}[\Phi^{\ast}\wedge\ast\sigma]-\frac{i}{2}[\Phi\wedge\ast\sigma^{\ast}].
\end{equation*}
Furthermore, $R_{\Phi}=2\pi_{\C}\circ M_{\Phi^{\ast}}$, from which it follows that $R_{\Phi}^{\ast}=2M_{\Phi^{\ast}}^{\ast}\circ i_{\C}\colon  \Omega^{1,1}(\Sigma,\mathfrak{su}(E))\to\Omega^{1,0}(\Sigma,\mathfrak{sl}(E))$,  thus
\begin{equation*}
R_{\Phi}^{\ast}\mu=-2i[\Phi \wedge\ast\mu].
\end{equation*}
Inserting these computations, we arrive at
\begin{multline}\label{eq:operatorLast}
L_{(A,t\Phi)}^{\ast}\begin{pmatrix}\mu\\\sigma\end{pmatrix}=
\begin{pmatrix}d_A^{\ast}&tS_{\Phi}^{\ast}\\tR_{t\Phi}^{\ast}&\bar\partial_A^{\ast}
\end{pmatrix}\begin{pmatrix}\mu\\\sigma\end{pmatrix}\\
=\begin{pmatrix}d_A^{\ast}\mu+\frac{i}{2}t[\Phi^{\ast}\wedge\ast\sigma]-\frac{i}{2}t[\Phi\wedge\ast\sigma^{\ast}]\\ \bar\partial_A^{\ast}\sigma-2it[\Phi \wedge\ast\mu]  
\end{pmatrix}.
\end{multline}

\subsubsection*{{\bf The Laplacians}}
For the remainder of the article, if not mentioned otherwise, we assume that the pair $(A,t\Phi)$ is a solution to the self-duality equations $\mathcal H(A,t\Phi)=0$. The    Laplace operator $D_{(A,t\Phi)}^0$ in  \eqref{eq:laplace0} then takes the form
\begin{equation*}\label{eq:laplace1}
D_{(A,t\Phi)}^0\gamma=\Delta_A\gamma+t^2\ast[\Phi\wedge[\Phi^{\ast}\wedge\gamma]]-t^2\ast[\Phi^{\ast}\wedge[\Phi\wedge\gamma]].
\end{equation*}
This operator has been  studied to considerable extend in \cite{msww14} (there as an operator acting on sections of  $i\mathfrak{su}(E)$ rather than  $\mathfrak {su}(E)$, which is essentially the same since multiplication by $i$ intertwins both operators). Hence in the following we need to focus more closely on the second Laplacian $D_{(A,t\Phi)}^2$ as defined in \eqref{eq:laplace2}.  Using the expression in \eqref{eq:iast}, it reads
\begin{equation*}
D_{(A,t\Phi)}^2 =\begin{pmatrix}\Delta_A +t^2R_{\Phi}R_{\Phi}^{\ast}&td_AS_{\Phi}^{\ast}+tR_{\Phi}\bar\partial_A^{\ast}\\t S_{\Phi}d_A^{\ast}+t\bar\partial_A R_{\Phi}^{\ast}&\bar\partial_A\bar\partial_A^{\ast}+t^2S_{\Phi}S_{\Phi}^{\ast}\end{pmatrix}.
\end{equation*}
With
\begin{eqnarray*}
R_{\Phi}R_{\Phi}^{\ast}\mu&=&
2i[\Phi\wedge[\Phi^{\ast}\wedge\ast\mu]]-2i[\Phi^{\ast}\wedge[\Phi\wedge\ast\mu]],\\
\bar\partial_AR_{\Phi}^{\ast}\mu&=&
-2[\Phi\wedge\partial_A^{\ast}\mu]=-2S_{\Phi}d_A^{\ast}\mu,\\
S_{\Phi}S_{\Phi}^{\ast}\sigma&=&\frac{i}{2}[\Phi \wedge[\Phi^{\ast}\wedge\ast\sigma]],\\
R_{\Phi}\bar\partial_A^{\ast}\sigma&=&
[\Phi\wedge\partial_A^{\ast}\sigma^{\ast}]+[\Phi^{\ast}\wedge\bar\partial_A^{\ast}\sigma],\\
d_AS_{\Phi}^{\ast}\sigma&=&-\frac{i}{2}[\Phi^{\ast}\wedge\partial_A\ast\sigma]+\frac{i}{2}[\Phi\wedge\bar\partial_A\ast\sigma^{\ast}]\\
&=&-\frac{1}{2}R_{\Phi}\bar\partial_A^{\ast}\sigma
\end{eqnarray*}
we obtain that
\begin{multline}\label{eq:laplace2A}
D_{(A,t\Phi)}^2\begin{pmatrix}\mu\\\sigma\end{pmatrix}=\\
\begin{pmatrix}\Delta_A\mu +2it^2[\Phi\wedge[\Phi^{\ast}\wedge\ast\mu]]-2it^2[\Phi^{\ast}\wedge[\Phi\wedge\ast\mu]]&\frac{1}{2}t [\Phi^{\ast}\wedge \bar\partial_A^{\ast}\sigma]+\frac{1}{2}t[\Phi\wedge\partial_A^{\ast}\sigma^{\ast}]\\-t[\Phi\wedge\partial_A^{\ast}\mu]&\bar\partial_A\bar\partial_A^{\ast}\sigma+\frac{i}{2}t^2[\Phi \wedge[\Phi^{\ast}\wedge\ast\sigma]]\end{pmatrix}.
\end{multline}
In order to understand the asymptotic behaviour  of the individual terms in  the sectional curvature formula  \eqref{eq:sectionalcurvature}, we need to analyze  the family of  Green operators $G_{(A,t\Phi)}^2:= (D_{(A,t\Phi)}^2)^{-1}$ in the limit $t\to\infty$ (the analysis of $G_{(A,t\Phi)}^0:= (D_{(A,t\Phi)}^0)^{-1}$ being carried out in \cite{msww14}). This is the goal of the next section.

\subsection{Estimates on the Green operator $G_{(A,t\Phi)}^2$}

For a solution $(A,t\Phi)$ as above, we denote $i_t:=i_{(A,t\Phi)}$, $L_t:=L_{(A,t\Phi)}$   and $D_t^j:=D_{(A,t\Phi)}^j$, $j=0,2$. We first consider the operator $D_t^2$ on the disk $\D$, assuming   that $(A,t\Phi)=(A_t^{\fid},t\Phi_t^{\fid})$ is the fiducial solution as introduced in \textsection \ref{subsect:hitchinsection}.

\begin{proposition}\label{prop:kernelLast}
Suppose that  the pair $(\mu,\sigma)$  satisfies Neumann boundary conditions on $\D$. Then $D_t^2(\mu,\sigma)=0$ for any $t>0$ implies $(\mu,\sigma)=0$.
\end{proposition}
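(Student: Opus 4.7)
The natural route is to exploit the factorization $D_t^2 = L_t L_t^*$. Pairing the equation $D_t^2(\mu,\sigma) = 0$ with $v = (\mu,\sigma)$ and integrating over $\D$, I would move $L_t$ across the $L^2$ pairing via integration by parts. The first-order operators $d_A$ and $\bar\partial_A$ appearing in $L_t$ produce boundary contributions of the form $\int_{\partial \D} B(v, L_t^* v)$, where $B$ is a bilinear expression in the restrictions of $v$ and $L_t^* v$ to $\partial \D$. The Neumann boundary condition on $(\mu,\sigma)$ is precisely what is required to kill these boundary contributions, leaving
\[
0 = \int_\D \langle L_t L_t^* v, v\rangle = \int_\D \|L_t^* v\|^2,
\]
so $L_t^* v \equiv 0$ on $\D$.

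It then remains to argue that $\ker L_t^* = \{0\}$ (subject to the Neumann data carried along from $v$). Using the explicit expression \eqref{eq:operatorLast}, the equation $L_t^* v = 0$ amounts to the first-order elliptic system
\begin{align*}
d_{A}^* \mu + \tfrac{it}{2}[\Phi^*\wedge \ast\sigma] - \tfrac{it}{2}[\Phi\wedge \ast\sigma^*] &= 0,\\
\bar\partial_A^* \sigma - 2it[\Phi\wedge \ast\mu] &= 0,
\end{align*}
specialized to $(A,t\Phi) = (A_t^{\fid}, t\Phi_t^{\fid})$. To extract vanishing, I would exploit the very explicit structure of the fiducial solution: $A_t^{\fid}$ is diagonal, $\Phi_t^{\fid}$ is off-diagonal, and $\det \Phi_t^{\fid} = -z\,dz^2$ vanishes only at the center of $\D$. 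Decomposing $(\mu,\sigma)$ according to the diagonal/off-diagonal splitting of $\mathfrak{sl}(E)$ and using the $S^1$-symmetry of the fiducial solution to Fourier-decompose in the angular variable reduces the system to a family of decoupled ODEs in $r = |z|$. Each such ODE, together with regularity at $r=0$ and the Neumann boundary condition at $r=1$, admits only the trivial solution.

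The main obstacle is clearly the second step. The integration-by-parts reduction is routine once the meaning of the Neumann boundary condition is pinned down, but the mode-by-mode analysis has to contend with the singular behavior of $A_t^{\fid}$ and $\Phi_t^{\fid}$ at the origin and with bookkeeping of how the Neumann condition distributes across the Fourier modes. An alternative and possibly cleaner route is to establish a Weitzenb\"ock-type identity for $L_t^* L_t$ or for $L_t L_t^*$ that makes $D_t^2$ manifestly strictly positive on the Neumann domain; this would bypass the explicit ODE analysis and deduce both steps simultaneously from a single coercivity estimate.
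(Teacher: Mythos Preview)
Your first step --- integrating by parts to reduce $D_t^2 v=0$ with Neumann data to $L_t^*v=0$ --- is exactly what the paper does. The divergence comes in the second step. The paper does \emph{not} carry out the Fourier/ODE mode analysis you propose; instead it follows something very close to the ``alternative'' you mention at the end. Concretely: from the two scalar equations comprising $L_t^*(\mu,\sigma)=0$, it applies $d_A$ to the first and substitutes the second to eliminate $\sigma$, obtaining a second-order equation for $\mu$ alone. Pairing with $\mu$ and integrating by parts (again justified by the Neumann condition) gives
\[
\|d_A^*\mu\|_{L^2(\D)}^2 + t^2\|[\Phi\wedge\ast\mu]\|_{L^2(\D)}^2 + t^2\|[\Phi^*\wedge\ast\mu]\|_{L^2(\D)}^2 = 0,
\]
so $[\Phi\wedge\ast\mu]=[\Phi^*\wedge\ast\mu]=0$. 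Since $\varphi$ and $\varphi^*$ are pointwise linearly independent on $\D\setminus\{0\}$, this forces $\mu=0$. With $\mu$ gone, the remaining equations say $\bar\partial_A^*\sigma=0$ and $[\Phi^*\wedge\ast\sigma]=0$; the latter gives $\ast\sigma=u\varphi^*$ for a scalar $u$, and the former makes $u$ antiholomorphic, which together with the Neumann boundary condition forces $u=0$.

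So the paper's route is a short Bochner-type manipulation rather than a full separation of variables. Your mode-by-mode plan would presumably succeed, but it is much heavier bookkeeping and you correctly flag it as the main obstacle; the paper's argument sidesteps it entirely. The key idea you are missing is that one can decouple $\mu$ from $\sigma$ at the level of the first-order system by differentiating one equation and substituting the other, and then exploit the pointwise linear independence of $\varphi$ and $\varphi^*$ for the fiducial solution.
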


\begin{proof}
The equation $D_t^2(\mu,\sigma)=0$ and Neumann boundary conditions imply that $L_t^{\ast}(\mu,\sigma)=0$. Hence  $d_A^{\ast}\mu+\frac{i}{2}t[\Phi^{\ast}\wedge\ast\sigma]-\frac{i}{2}t[\Phi\wedge\ast\sigma^{\ast}]=0$. Differentiating this equation we obtain that
\begin{eqnarray*}
0&=& d_A d_A^{\ast}\mu-\frac{i}{2}t[\Phi^{\ast}\wedge \partial_A\ast\sigma]+\frac{i}{2}t[\Phi\wedge \bar\partial_A\ast\sigma^{\ast}]\\
&=&  d_A d_A^{\ast}\mu-\frac{i}{2}t[\Phi^{\ast}\wedge\ast  \bar{\partial}_A^{\ast}\sigma]+\frac{i}{2}t[\Phi\wedge \ast  \partial_A^{\ast}\sigma^{\ast}]\\
&=&  d_A d_A^{\ast}\mu +t^2[\Phi^{\ast}\wedge\ast   [\Phi\wedge \ast\mu]]+ t^2[\Phi\wedge \ast   [\Phi\wedge\ast\mu]^{\ast}],
\end{eqnarray*}
where the last inequality follows from $\bar{\partial}_A^{\ast}\sigma-2it[\Phi\wedge\ast\mu]=0$. Taking the inner product with $\mu$ and integrating by parts we get
\begin{eqnarray*}
0&=&\langle\mu,  d_A d_A^{\ast}\mu +t^2[\Phi^{\ast}\wedge\ast   [\Phi\wedge \ast\mu]]+ t^2[\Phi\wedge \ast   [\Phi\wedge\ast\mu]^{\ast}]\rangle\\
&=&\| d_A^{\ast}\mu \|_{L^2(\D)}^2+t^2\|[\Phi\wedge\ast\mu]\|_{L^2(\D)}^2+t^2\|[\Phi^{\ast}\wedge\ast\mu]\|_{L^2(\D)}^2. 
\end{eqnarray*}
The integration by parts is justified   again by the assumption that $\mu$ satisfies Neumann boundary conditions. Now  the latter equation implies that $[\Phi\wedge\ast\mu]=[\Phi^{\ast}\wedge\ast\mu]=0$, which we claim forces $\mu$ to vanish identically. Namely, writing $\Phi=\varphi\, dz$, this condition implies that $\ast\mu=u\varphi$ and $\ast\mu=v\varphi^{\ast}$ for   complex-valued functions $u$ and $v$. Since for all $(r,\theta)\in\D$, $r\neq0$,
\begin{equation*}
\varphi(r,\theta)=r^{\frac{1}{2}}\begin{pmatrix}
0& e^{-h_t}e^{i\theta}\\ e^{h_t}&0\end{pmatrix}\qquad\textrm{and}\qquad \varphi^{\ast}(r,\theta)=r^{\frac{1}{2}}\begin{pmatrix}
0& e^{h_t}\\ e^{-h_t}e^{-i\theta}&0 
\end{pmatrix}
\end{equation*}
are linearly independent, we conclude  that $u=v=0$ and hence that $\mu$   vanishes identically. It is then not difficult to see that also $\sigma=0$. Namely,  with $\mu=0$ it follows that $\bar{\partial}_A^{\ast}\sigma=0$ and $[\Phi^{\ast}\wedge\ast\sigma]=0$. The latter condition shows that $\ast\sigma=u\varphi^{\ast}$ for some further complex-valued function $u$. The first condition is equivalent to $\partial_A\ast\sigma=0$. We therefore conclude that $0=\partial_A(u\varphi^{\ast})= \partial u\cdot \varphi^{\ast}$, since $\partial_A\Phi^{\ast}=0$. Hence the function $u$ is antiholomorphic on $\D$. Together with Neumann boundary conditions assumed by $\ast\sigma$ it is straightforward to check that $u$  has to vanish identically. This completes the proof.
\end{proof}

We use this proposition to  establish a uniform lower bound for the first eigenvalue of $D_t^2$. For the operator   $D_t^0$ such a lower bound has been shown in  \cite[Lemma 6.3]{msww14}. We follow its proof, which is based on the domain decomposition principle as stated in \cite{ba00}, with some minor modifications. At this point we also need to introduce, for any limiting configuration $(A_{\infty},\Phi_{\infty})$, the  splitting of the vector bundle $\mathfrak{su}(E)\to\Sigma^{\times}$ into the  direct sum of the line bundle $L_{\infty}$ of traceless skew-hermitian endomorphisms commuting with $\Phi_{\infty}$ and its $L^2$ orthogonal complement $L_{\infty}^{\perp}$. Both subbundles are parallel with respect to the connection $A_{\infty}$, cf.\ \cite[\textsection 4.2]{msww14} for details. Note that the complexifications of these subbundles give rise to the decomposition $\mathfrak{sl}(E)=L_{\infty}^{\C}\oplus L_{\infty}^{\C,\perp}$.

\begin{lemma}\label{lem:uniformlowerboundeigenvalue}
There exists a constant $C>0$ such that the  smallest eigenvalue $\lambda_{t}$ of $D_t^2$ satisfies $\lambda_t\geq C$ for all sufficiently large $t$. 
\end{lemma}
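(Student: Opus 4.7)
The strategy is a contradiction argument combined with the domain-decomposition principle of Bando \cite{ba00}, mirroring the proof of \cite[Lemma 6.3]{msww14}. Suppose for contradiction that there exist $t_n \to \infty$ and a sequence $(\mu_n,\sigma_n)\in\Omega^2(\Sigma,\mathfrak{su}(E))\oplus\Omega^2(\Sigma,\mathfrak{sl}(E))$ of unit $L^2$ norm with $\|L_{t_n}^*(\mu_n,\sigma_n)\|_{L^2(\Sigma)}^2 = \langle D_{t_n}^2(\mu_n,\sigma_n),(\mu_n,\sigma_n)\rangle \to 0$. Applying Bando's principle to the decomposition $\Sigma = \Sigma^{\exterior}\cup\bigsqcup_{p\in\mathfrak{p}}\D(p)$ reduces the task to producing a uniform positive lower bound for the smallest Neumann eigenvalue of $D_t^2$ on each piece of the decomposition.

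On each disk $\D(p)$, the solution equals the fiducial one $(A_t^{\fid},t\Phi_t^{\fid})$ and Proposition \ref{prop:kernelLast} already asserts that the Neumann kernel is trivial for every fixed $t$. To upgrade this to a bound uniform in $t$, I would decompose $(\mu,\sigma)$ pointwise along the orthogonal splittings $\mathfrak{su}(E) = L_\infty\oplus L_\infty^{\perp}$ and its complexification. Away from the origin, $[\Phi\wedge\cdot\,]$ and $[\Phi^*\wedge\cdot\,]$ are bounded below on the transverse parts with constants comparable to $|\Phi|^2\sim r$, so the terms $t^2 R_\Phi R_\Phi^*$ and $t^2 S_\Phi S_\Phi^*$ appearing in \eqref{eq:laplace2A} coerce the transverse components with constant at least $ct^2$ outside a shrinking neighborhood of $r=0$. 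Inside such a neighborhood, the substitution $\rho=\tfrac{8}{3}tr^{3/2}$ (cf.\ Lemma \ref{f_t-h_t-function}) renders the fiducial operator essentially $t$-independent on a growing ball, and passing to the limit reduces coercivity on the inner core to a triviality-of-kernel statement for the entire fiducial solution on $\C$, obtained by the same algebraic computation as in Proposition \ref{prop:kernelLast}.

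On the exterior piece $\Sigma^{\exterior}$, the desingularization theorem of \cite{msww14} yields exponential $C^{\infty}$ convergence of $(A_t,t\Phi_t)$ to a limiting configuration $(A_\infty,t\Phi_\infty)$, and the connection $A_\infty$ preserves the splitting into $L_\infty$ and $L_\infty^{\perp}$. Since $[\Phi_\infty,\cdot\,]$ is bounded below (pointwise) on $L_\infty^{\perp}$, the transverse components of $(\mu,\sigma)$ are again controlled by the $t^2$-terms. On the tangential $L_\infty$-component the $\Phi$-contributions drop out in the limit, and the remaining operator is the de Rham Laplacian on the flat line bundle $L_\infty$ with Neumann boundary conditions, whose first eigenvalue is strictly positive because the $\mathbb{Z}/2$ holonomy of $A_\infty$ around each puncture forbids any nontrivial covariantly constant section.

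The main obstacle is the off-diagonal coupling in $D_t^2$ between the $\mu$- and $\sigma$-components visible in \eqref{eq:laplace2A} via the $O(t)$ cross-terms, which could a priori cancel the diagonal coercivity. This is resolved by always working with the squared norm $\|L_t^*(\mu,\sigma)\|_{L^2}^2$ instead of $D_t^2$ itself: the former is manifestly the sum of two nonnegative squares, and, as in the proof of Proposition \ref{prop:kernelLast}, each square splits cleanly along the $L_\infty\oplus L_\infty^{\perp}$ decomposition up to errors that vanish in the limit $t\to\infty$, controlled by Lemma \ref{f_t-h_t-function} and the $L^2$ estimates underlying Lemma \ref{thm:convhorizontal}.
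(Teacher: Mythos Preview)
Your strategy---domain decomposition as in \cite{ba00}, rescaling on the disks via $\rho=\tfrac{8}{3}tr^{3/2}$, and the $L_\infty\oplus L_\infty^{\perp}$ splitting on the exterior---is exactly the paper's. But two steps in your sketch do not go through as written.

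On the disks, the paper takes the radius to be $ct^{-2/3}$ rather than fixed. Under $r\mapsto\rho$ this sends $\D_{p,t}$ to a \emph{fixed} disk, and the Rayleigh quotient rescales exactly as $\mathcal R_{\D_{p,t},t}(v)=t^{4/3}\mathcal R_{\D_{p,1},1}(\hat v)$, so Proposition~\ref{prop:kernelLast} (stated for $\D$) gives $\lambda_t(\D_{p,t})=t^{4/3}\lambda_1(\D_{p,1})\to\infty$. Your version---a fixed $\D(p)$, a ``shrinking neighborhood'' that becomes a ``growing ball'' after rescaling, and coercivity reduced to a kernel statement for the fiducial operator on all of $\C$---is where the argument breaks: the Neumann eigenvalue on a growing domain can tend to zero, Proposition~\ref{prop:kernelLast} is only available on the disk, and on the annulus outside your shrinking core you treat only the $L_\infty^{\perp}$-components, leaving the $L_\infty$-parallel part unaddressed. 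The $t$-dependent radius resolves all three issues at once.

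On the exterior $L_\infty^{\perp}$-part, expanding $\|L_t^{\ast}(\mu,\sigma)\|^2$ still produces $O(t)$ cross terms such as $t\langle[\ast\sigma\wedge\Phi^{\ast}],\partial_A^{\ast}\mu\rangle$; these do not ``split cleanly'' and the paper absorbs them into the $O(t^2)$ diagonal terms $\|[\Phi\wedge\ast\mu]\|^2$ and $\|[\Phi^{\ast}\wedge\ast\sigma]\|^2$ by an explicit Cauchy--Schwarz estimate. Your holonomy remark for the $L_\infty$-part and the paper's invocation of the ``positive potential term'' from \cite[Proposition~5.2(i)]{msww14} are two descriptions of the same mechanism: the nontrivial $\Z/2$ holonomy of $A_\infty$ around each $p\in\mathfrak p$ manifests, after Fourier decomposition, as an $r^{-2}$-type potential in $\Delta_{A_\infty}$, which is precisely what makes the lower bound uniform as the exterior domain $\Sigma_t^{\operatorname{ext}}$ grows toward the punctures.
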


\begin{proof}
For $t\geq1$ we decompose $\Sigma$ into the disjoint union  $\Sigma=\bar{\Sigma}_t^{\operatorname{ext}}\cup \bigcup_{p\in\mathfrak p}\D_{p,t}$, where $\D_{p,t}$ denotes the open disk of radius $ct^{-\frac{2}{3}}$ about $p$ and $\Sigma_t^{\operatorname{ext}}=\Sigma\setminus \bigcup_{p\in\mathfrak p}\bar{\D}_{p,t}$. The domain decomposition principle  yields   for $\lambda_{t}$ the lower bound
\begin{equation*}
\lambda_{t}\geq\min_{p\in\mathfrak p} \{\lambda_t(\Sigma_t^{\operatorname{ext}}) ,\lambda_t(\D_{p,t})\},
\end{equation*}
where $\lambda_t(U)$ denotes the smallest eigenvalue of $D_t^2$ under Neumann boundary conditions on the subdomain $U\in\{ \Sigma_t^{\operatorname{ext}}\}\cup  \bigcup_{p\in\mathfrak p}\{\D_{p,t}\}$. We show a uniform lower bound for  these, utilizing in each case the variational characterization of the smallest Neumann eigenvalue  as the infimum of the Rayleigh quotient
\begin{equation*}
\mathcal R_{U,t}(v):=\frac{\|L_t^{\ast}v\|_{L^2(U)}^2}{\|v\|_{L^2(U)}^2}
\end{equation*}
over all nonzero $v\in H^1(U)$. The result is then implied by the following two claims.

\setcounter{step}{0}
\begin{claim}
There is a constant $C>0$ such that for all $p\in\mathfrak p$ the smallest Neumann eigenvalue on $\D_{p,t}$ satisfies  $\lambda_t(\D_{p,t})\geq C$ for all sufficiently large $t$.
\end{claim}

Since for $t$ sufficiently large, the solution $(A,t\Phi)$ differs from $(A_t^{\fid},t\Phi_t^{\fid})$ by some exponentially small (w.r.t.\ any $C^k$ norm) term, it suffices to prove the claim with $(A,t\Phi)$ replaced by $(A_t^{\fid},t\Phi_t^{\fid})$. We further observe that the numerator of the Rayleigh quotient $\mathcal R_{\D_{p,t},t}$ is invariant under the conformal rescaling $  (r,\theta)\mapsto (\rho,\theta)$ with $\rho=\frac{8}{3} tr^{\frac{3}{2}}$. In fact, a straightforward calculation shows that for every $v\in H^1(\D_{p,t})$ the quantity
$| L_t^{\ast} v|^2$ scales with the factor $t^{\frac{4}{3}}$, while  the area form $rdrd\theta=\frac{\sqrt[3]{3}}{8} t^{-\frac{4}{3}}\rho^{\frac{1}{3}}d\rho d\theta$ scales with $t^{-\frac{4}{3}}$. As for the denominator $\|v\|_{L^2(\D_{p,t})}^2$, we get a scaling with the factor $t^{-\frac{4}{3}}$, so that altogether
\begin{equation*}
\mathcal R_{\D_{p,t},t}(v)= t^{\frac{4}{3}} \mathcal R_{\D_{p,1},1}(\hat v),
\end{equation*}
where $\hat v(\rho,\theta)=v(r,\theta)$. Passing to the infimum on both sides it follows that $\lambda_t(\D_{p,t})= t^{\frac{4}{3}}\lambda_1(\D_{p,1})$. Since by Proposition \ref{prop:kernelLast} the kernel of the operator $L_1^{\ast}$ under Neumann boundary conditions is trivial, it follows that $\lambda_1(\D_{p,1})>0$, completing the proof of the claim.

\begin{claim}
There is a constant $C>0$ such that   the smallest Neumann eigenvalue on $ \Sigma_t^{\operatorname{ext}}$ satisfies  $\lambda_t( \Sigma_t^{\operatorname{ext}})\geq C$  for all    sufficiently large $t$.
\end{claim}

Since on $\Sigma_t^{\operatorname{ext}}$ the pair $(A_t,\Phi_t)$ differs from $(A_{\infty},\Phi_{\infty})$ by a  term which decays (w.r.t.~any $C^k$ norm)  exponentially to $0$ as $t\to\infty$, it is enough to show a $t$-independent lower bound for  $\mathcal R_{\D_{p,t},t}(v)$ where in the Rayleigh quotient the  operator $L_t^{\ast}$ is replaced by the one induced by  $(A_{\infty},\Phi_{\infty})$. We again denote this new operator by $L_t^{\ast}$.   From \eqref{eq:operatorLast} we see that $L_t^{\ast}$ acts invariantly on the subspaces   $\Omega^2(\Sigma,L_{\infty})\oplus \Omega^2(\Sigma,L_{\infty}^{\C})$ and $\Omega^2(\Sigma,L_{\infty}^{\perp})\oplus \Omega^2(\Sigma,L_{\infty}^{\C,\perp})$, so that  it is enough to show a $t$-independent lower bound for  the restriction of the Rayleigh quotient to either of these subspaces. Note that $L_t^{\ast}\colon (\mu,\sigma)\mapsto (d_{A_{\infty}}^{\ast}\mu, \bar{\partial}_{A_{\infty}}^{\ast}\sigma)$ for any element  $(\mu,\sigma)$ of  the former subspace. Since the connection $A_{\infty}$ is flat it follows that ${\partial}_{A_{\infty}}^{\ast}\bar{\partial}_{A_{\infty}}^{\ast}=\frac{1}{2}\Delta_{A_{\infty}}$, so that it suffices to argue that $\Delta_{A_{\infty}}$ satisfies a $t$-independent positive lower bound on $\Sigma_t^{\operatorname{ext}}$ under Neumann boundary conditions. This   follows by the same line of argument as in the proof of \cite[Proposition 5.2 (i)]{msww14}. It makes use  of the fact that on the line bundle $L_{\infty}$ the Laplacian $\Delta_{A_{\infty}}$ equals a nonnegative operator plus a potential term satisfying a uniform positive pointwise lower bound, and carries over to the situation at hand. Next we consider $L_t^{\ast}$ as an operator on   $\Omega^2(\Sigma,L_{\infty}^{\perp})\oplus \Omega^2(\Sigma,L_{\infty}^{\C,\perp})$.  Using the expression \eqref{eq:laplace2A} for $D_t^2=L_tL_t^{\ast}$ together with the Cauchy-Schwarz inequality we can in this case estimate the numerator of the Rayleigh quotient $\mathcal R_{\Sigma_t^{\operatorname{ext}}}(\mu,\sigma)$ from above as (using the notation $(A,\Phi)=(A_{\infty},\Phi_{\infty})$ for short)
\begin{eqnarray*}
\lefteqn{\langle (\mu,\sigma), D_t^2(\mu,\sigma)\rangle_{L^2(\Sigma_t^{\operatorname{ext}})}}\\
&=& \|d_A\mu\|_{L^2(\Sigma_t^{\operatorname{ext}})}^2 + \|\bar\partial_A^{\ast}\sigma\|_{L^2(\Sigma_t^{\operatorname{ext}})}^2 \\
&&+2t^2\langle\mu,i[\Phi\wedge[\Phi^{\ast}\wedge\ast\mu]]- i[\Phi^{\ast}\wedge[\Phi\wedge\ast\mu]]\rangle_{L^2(\Sigma_t^{\operatorname{ext}})}\\
&&+\frac{t}{2}\langle\mu, [\Phi^{\ast}\wedge\bar\partial_A^{\ast}\sigma]
+[\Phi\wedge\partial_A^{\ast}\sigma^{\ast}]\rangle_{L^2(\Sigma_t^{\operatorname{ext}})}\\
&&+\frac{t^2}{2}\langle\sigma,i[\Phi\wedge[\Phi^{\ast}\wedge\ast\sigma]]\rangle -t\langle\sigma,[\Phi\wedge\partial_A^{\ast}\mu]\rangle_{L^2(\Sigma_t^{\operatorname{ext}})}\\
&=&\|d_A\mu\|_{L^2(\Sigma_t^{\operatorname{ext}})}^2 + \|\bar\partial_A^{\ast}\sigma\|_{L^2(\Sigma_t^{\operatorname{ext}})}^2\\
&& +4t^2\| [\Phi\wedge\ast\mu]\|_{L^2(\Sigma_t^{\operatorname{ext}})}^2+\frac{t^2}{2}\|[\Phi^{\ast}\wedge\ast\sigma]\|_{L^2(\Sigma_t^{\operatorname{ext}})}^2\\
&&- \frac{t}{2}\langle [\ast\mu\wedge\Phi],\bar\partial_A^{\ast}\sigma\rangle_{L^2(\Sigma_t^{\operatorname{ext}})} - \frac{t}{2}\langle[\ast\mu\wedge\Phi^{\ast}],\partial_A^{\ast}\sigma^{\ast}\rangle_{L^2(\Sigma_t^{\operatorname{ext}})}\\
&&+t\langle[\ast\sigma\wedge\Phi^{\ast}],\partial_A^{\ast}\mu\rangle_{L^2(\Sigma_t^{\operatorname{ext}})}\\
&\geq& \|d_A\mu\|_{L^2(\Sigma_t^{\operatorname{ext}})}^2 + \|\bar\partial_A^{\ast}\sigma\|_{L^2(\Sigma_t^{\operatorname{ext}})}^2\\
&& +4t^2\| [\Phi\wedge\ast\mu]\|_{L^2(\Sigma_t^{\operatorname{ext}})}^2+\frac{t^2}{2}\|[\Phi^{\ast}\wedge\ast\sigma]\|_{L^2(\Sigma_t^{\operatorname{ext}})}^2\\
&&- \frac{1}{4} \|\bar\partial_A^{\ast}\sigma\|_{L^2(\Sigma_t^{\operatorname{ext}})}^2-t^2\| [\Phi\wedge\ast\mu]\|_{L^2(\Sigma_t^{\operatorname{ext}})}^2\\
&&- \|\partial_A^{\ast}\mu\|_{L^2(\Sigma_t^{\operatorname{ext}})}^2-\frac{t^2}{4}\|[\Phi^{\ast}\wedge\ast\sigma]\|_{L^2(\Sigma_t^{\operatorname{ext}})}^2\\
&=& \|\bar \partial_A^{\ast}\mu\|_{L^2(\Sigma_t^{\operatorname{ext}})}^2 +\frac{3}{4} \|\bar\partial_A^{\ast}\sigma\|_{L^2(\Sigma_t^{\operatorname{ext}})}^2\\
&& +3t^2\| [\Phi\wedge\ast\mu]\|_{L^2(\Sigma_t^{\operatorname{ext}})}^2+\frac{t^2}{4}\|[\Phi^{\ast}\wedge\ast\sigma]\|_{L^2(\Sigma_t^{\operatorname{ext}})}^2.
\end{eqnarray*}
Since   sections of  $L_{\infty}^{\perp}$, respectively of $L_{\infty}^{\C,\perp}$ satisfy  the uniform pointwise lower bounds
\begin{equation*}
| [\Phi\wedge\ast\mu]|^2\geq C|\mu|^2\qquad\textrm{and}\qquad     |[\Phi^{\ast}\wedge\ast\sigma]|^2\geq C|\sigma|^2
\end{equation*}
for some $t$-independent constant $C>0$ (cf.\ \cite{msww14} for details), we get that
\begin{equation*}
\langle (\mu,\sigma), D_t^2(\mu,\sigma)\rangle_{L^2(\Sigma_t^{\operatorname{ext}})} \geq Ct^2 \|(\mu,\sigma)\|_{L^2(\Sigma_t^{\operatorname{ext}})}^2,
\end{equation*}
and the claim follows.
\end{proof}

As an immediate consequence, we record the following corollary.

\begin{corollary}\label{cor:uniformupperbound}
There exists a constant $C>0$ such that the norms of the operators $G_{t}^0$   and $G_{t}^2$ satisfy the uniform bound
\begin{equation*}
\|G_{t}^0\|_{\mathcal L(L^2(\Sigma))}+\|G_{t}^2\|_{\mathcal L(L^2(\Sigma))}\leq C
\end{equation*}
for all sufficiently large $t$. 
\end{corollary}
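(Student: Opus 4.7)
The plan is to reduce the operator-norm bound to the eigenvalue bound that has just been established. Both $D_t^0$ and $D_t^2$ are self-adjoint, non-negative elliptic operators on the closed surface $\Sigma$, acting on sections of finite-rank bundles, and both are invertible: the invertibility of $D_t^0 = i_t^{\ast}i_t$ follows from the injectivity of $i_t$ (irreducibility of the solution $(A,t\Phi)$, which is guaranteed since $\det\Phi$ is simple), while the invertibility of $D_t^2 = L_t L_t^{\ast}$ follows from the surjectivity of $L_t$ on the deformation complex \eqref{eq:deformcomplex}, equivalently the injectivity of $L_t^{\ast}$, which in turn is controlled by Lemma \ref{lem:uniformlowerboundeigenvalue}.

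Once invertibility is in hand, the spectral theorem for self-adjoint operators on $L^2$ gives the identity
\begin{equation*}
\|G_t^j\|_{\mathcal L(L^2(\Sigma))} = \frac{1}{\lambda_t(D_t^j)}, \qquad j=0,2,
\end{equation*}
where $\lambda_t(D_t^j)$ denotes the smallest eigenvalue of $D_t^j$. So the corollary will follow at once from uniform positive lower bounds on $\lambda_t(D_t^0)$ and $\lambda_t(D_t^2)$ for $t$ large.

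For $D_t^2$, such a bound is exactly the content of Lemma \ref{lem:uniformlowerboundeigenvalue}, which produces a constant $C>0$ with $\lambda_t(D_t^2) \geq C$ for $t$ sufficiently large. For $D_t^0$ the corresponding statement has been proven in \cite[Lemma 6.3]{msww14} (the operator there acts on sections of $i\mathfrak{su}(E)$, but multiplication by $i$ is an isometric intertwiner, so the spectrum is identical). Combining these two eigenvalue bounds yields
\begin{equation*}
\|G_t^0\|_{\mathcal L(L^2(\Sigma))} + \|G_t^2\|_{\mathcal L(L^2(\Sigma))} \leq \frac{1}{\lambda_t(D_t^0)} + \frac{1}{\lambda_t(D_t^2)} \leq 2C^{-1},
\end{equation*}
which is the claimed estimate.

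There is essentially no obstacle: the only substantive work has already been done in establishing the uniform spectral gap in Lemma \ref{lem:uniformlowerboundeigenvalue} (and its degree-zero counterpart in \cite{msww14}). The corollary is a one-line spectral-theory consequence, and the main point worth double-checking is just that the operators are indeed invertible on all of $L^2(\Sigma)$ for the $t$ under consideration, which follows from the exactness of the deformation complex at irreducible solutions.
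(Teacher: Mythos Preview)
Your proof is correct and matches the paper's reasoning: the corollary is stated there as ``an immediate consequence'' of Lemma~\ref{lem:uniformlowerboundeigenvalue} (together with the degree-zero analogue from \cite[Lemma~6.3]{msww14}), and the spectral-theory argument you spell out is exactly the intended one-line justification.
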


\begin{remark}
The estimate in Corollary \ref{cor:uniformupperbound} holds  for all $t\geq1$, as follows from a simple compactness argument and the fact (shown in \cite{hi87})  that the operators $D_t^j$, $j=0,2$,  have bounded inverses $G_{t}^j$ for all $t>0$.
\end{remark}

Subsequently, we let  $\Delta$ denote the (negative semidefinite) Laplace-Beltrami operator on $\Sigma$. Thus  $\Delta  =-d^{\ast}d= -2\bar\partial^{\ast}\bar\partial$.

\begin{lemma}\label{lem:subharmonicest}
Let $U\subseteq\Sigma$ be open and suppose that $(\mu,\sigma)$ satisfies $D_t^2(\mu,\sigma)=0$ on $U$. Then the function $u=\frac{1}{2}(|\mu |^2+\frac{3}{4}| \sigma|^2)$ satisfies the differential inequality
\begin{multline}\label{eq:diffineqsubharm}
\Delta u\geq  \frac{1}{7} | \partial_{A}^{\ast}\mu   |^2+ | \bar\partial _A^{\ast}\mu |^2 +\frac{3}{2}  \left| \bar\partial_A^{\ast}\sigma\right|^2\\
+\frac{7}{2} t^2|[ \Phi\wedge \ast\mu]|^2 +\frac{3}{32} t^2|[\Phi^{\ast}\wedge \ast\sigma]|^2.
\end{multline}
It is in particular subharmonic on $U$ and hence assumes its maximum on the boundary $\partial U$. An analogue statement holds for the operator $D_t^0$ (cf.\ \cite{msww17} for the slightly easier proof).
\end{lemma}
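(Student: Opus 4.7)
My plan is a pointwise Bochner--Weitzenb\"ock calculation, applied separately to $|\mu|^2$ and $|\sigma|^2$. For any section $s$ of a Hermitian bundle with compatible connection one has the pointwise identity $\tfrac12\Delta|s|^2 = |\nabla_A s|^2 + \Re\langle s, \nabla_A^{\ast}\nabla_A s\rangle$. Applied to $\mu\in\Omega^2(\Sigma,\mathfrak{su}(E))$ and $\sigma\in\Omega^2(\Sigma,\mathfrak{sl}(E))$, and combined with the standard Weitzenb\"ock formula relating the rough Laplacian to the Hodge operators appearing in $L_t^{\ast}$ (cf.\ \eqref{eq:operatorLast}), this yields a lower bound of the form
\[
\tfrac12\Delta|\mu|^2 \;\geq\; |\partial_A^{\ast}\mu|^2 + |\bar\partial_A^{\ast}\mu|^2 - \Re\langle\mu, \Delta_A\mu\rangle,
\]
together with an analogue for $\sigma$ in which $\Delta_A$ is replaced by $\bar\partial_A\bar\partial_A^{\ast}$. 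Here the $(1,0)+(0,1)$ decomposition of $\nabla_A$ on a Riemann surface supplies the two codifferential terms (up to harmless constants), and the zeroth-order curvature corrections are bounded and $t$-independent.

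Next, I use the hypothesis $D_t^2(\mu,\sigma)=0$ to eliminate the Hodge-type terms. From \eqref{eq:laplace2A} one reads off
\begin{align*}
\Delta_A\mu &= -2it^2\bigl([\Phi\wedge[\Phi^{\ast}\wedge\ast\mu]]-[\Phi^{\ast}\wedge[\Phi\wedge\ast\mu]]\bigr) - \tfrac{t}{2}\bigl([\Phi^{\ast}\wedge\bar\partial_A^{\ast}\sigma]+[\Phi\wedge\partial_A^{\ast}\sigma^{\ast}]\bigr), \\
\bar\partial_A\bar\partial_A^{\ast}\sigma &= -\tfrac{i}{2}t^2[\Phi\wedge[\Phi^{\ast}\wedge\ast\sigma]] + t[\Phi\wedge\partial_A^{\ast}\mu].
\end{align*}
Pairing pointwise with $\mu$ and $\sigma$ respectively and using the adjoint identity $\langle a,[b\wedge c]\rangle=\langle [b^{\ast}\wedge a],c\rangle$, the iterated commutators collapse to explicit positive multiples of $t^2|[\Phi\wedge\ast\mu]|^2$ and $t^2|[\Phi^{\ast}\wedge\ast\sigma]|^2$. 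These furnish the large positive coefficients that ultimately produce the $\tfrac72 t^2|[\Phi\wedge\ast\mu]|^2$ and $\tfrac{3}{32}t^2|[\Phi^{\ast}\wedge\ast\sigma]|^2$ terms on the right-hand side of \eqref{eq:diffineqsubharm}.

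The cross-terms linking $\mu$ and $\sigma$, namely $\tfrac{t}{2}\langle\mu,[\Phi^{\ast}\wedge\bar\partial_A^{\ast}\sigma]\rangle$, $\tfrac{t}{2}\langle\mu,[\Phi\wedge\partial_A^{\ast}\sigma^{\ast}]\rangle$ and $t\langle\sigma,[\Phi\wedge\partial_A^{\ast}\mu]\rangle$, are controlled pointwise by Young's inequality $|\langle a,b\rangle|\leq\tfrac{\varepsilon}{2}|a|^2+\tfrac{1}{2\varepsilon}|b|^2$ with three free parameters $\varepsilon_i$. Fractions of the positive quantities $|\partial_A^{\ast}\mu|^2$, $|\bar\partial_A^{\ast}\sigma|^2$, $t^2|[\Phi\wedge\ast\mu]|^2$, and $t^2|[\Phi^{\ast}\wedge\ast\sigma]|^2$ are then consumed to absorb these couplings; the weight $\tfrac34$ in $u=\tfrac12(|\mu|^2+\tfrac34|\sigma|^2)$ is chosen precisely so that a single admissible triple $(\varepsilon_1,\varepsilon_2,\varepsilon_3)$ leaves the prescribed rational constants $\tfrac17,\,1,\,\tfrac32,\,\tfrac72,\,\tfrac{3}{32}$ on the right-hand side.

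The principal obstacle is this simultaneous linear-algebraic balance: one must verify that the four parameters $(\tfrac34,\varepsilon_1,\varepsilon_2,\varepsilon_3)$ can be chosen to render all five resulting diagonal coefficients strictly positive and in fact equal to the stated values. This is elementary but tedious, and clearly dictated the peculiar weight $\tfrac34$ in the definition of $u$. Once \eqref{eq:diffineqsubharm} is established, the right-hand side is manifestly nonnegative, so $\Delta u\geq 0$, and the classical maximum principle yields both the subharmonicity of $u$ on $U$ and the assertion that its maximum is attained on $\partial U$.
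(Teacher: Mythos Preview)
Your strategy is the same as the paper's: a Bochner identity for $\tfrac12\Delta|\mu|^2$ and $\tfrac12\Delta|\sigma|^2$, substitution of $\Delta_A\mu$ and $\bar\partial_A\bar\partial_A^{\ast}\sigma$ from the equation $D_t^2(\mu,\sigma)=0$, and then Cauchy--Schwarz/Young to absorb the three cross terms. The paper in fact gets by with a single free parameter (it fixes the first absorption and takes $\varepsilon^2=\tfrac78$ in the second), so your three $\varepsilon_i$ are more than necessary.

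One point in your write-up is not right and should be fixed. You pass through the Weitzenb\"ock formula and assert that ``the zeroth-order curvature corrections are bounded and $t$-independent.'' But the bundle curvature here is $F_A=-t^2[\Phi\wedge\Phi^{\ast}]$, which grows like $t^2$; if a Weitzenb\"ock remainder were actually present it would \emph{not} be $t$-independent, and your argument as stated would collapse. The reason everything works is that for \emph{top-degree} forms on a surface there is no correction at all: $\ast\mu$ is a $0$-form, $\ast$ is parallel, and hence $|\nabla_A\mu|=|d_A(\ast\mu)|=|d_A^{\ast}\mu|$, so $d_Ad_A^{\ast}=\nabla_A^{\ast}\nabla_A$ on $\Omega^2$. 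The paper therefore writes the exact identity
\[
\Delta u=|d_A^{\ast}\mu|^2-\langle d_Ad_A^{\ast}\mu,\mu\rangle+\tfrac32|\bar\partial_A^{\ast}\sigma|^2-\tfrac32\langle\bar\partial_A\bar\partial_A^{\ast}\sigma,\sigma\rangle
\]
directly (valid for any unitary connection $A$), with no Weitzenb\"ock detour, and then proceeds exactly as you outline. Once you replace your appeal to Weitzenb\"ock by this exact identity, your sketch and the paper's proof coincide.
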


\begin{proof}
The function $\Delta u$ satisfies   the general identity
\begin{equation*}
\Delta u=\frac{1}{2}\Delta (|\mu |^2+\frac{3}{4}| \sigma|^2)=\left| d_A^{\ast}\mu\right|^2-\langle d_Ad_A^{\ast}\mu,\mu\rangle+\frac{3}{2}\left| \bar\partial_A^{\ast}\sigma\right|^2-\frac{3}{2}\langle \bar\partial_A\bar\partial_A^{\ast}\sigma,\sigma\rangle
\end{equation*}
with respect to any unitary connection $A$. Now replacing   $d_Ad_A^{\ast}\mu$ and $\bar\partial_A\bar\partial_A^{\ast}\sigma$ using the equation    $D_t^2(\mu,\sigma)=0$ we obtain that
\begin{eqnarray*}\label{eq:Laplupos}\begin{split}
 \lefteqn{\Delta u=\left| d_A^{\ast}\mu\right|^2+\frac{3}{2}\left| \bar\partial_A^{\ast}\sigma\right|^2}\\
&&+\langle\mu,2it^2[\Phi\wedge[\Phi^{\ast}\wedge\ast\mu]]-2it^2[\Phi^{\ast}\wedge[\Phi\wedge\ast\mu]]\\
&&+\frac{t}{2}[\Phi^{\ast}\wedge\bar\partial_{A}^{\ast}\sigma]+\frac{t}{2}[\Phi\wedge\partial_{A  }^{\ast}\sigma^{\ast}]\rangle\\
&&+\frac{3}{2}\langle\sigma,-t[\Phi\wedge\partial_{A}^{\ast}\mu]+\frac{i}{2}t^2[\Phi\wedge[\Phi^{\ast}\wedge\ast\sigma]]\rangle.
\end{split}
\end{eqnarray*}
Using the identity $\langle \alpha,[\beta,\gamma]\rangle=-\langle[\alpha,\beta^{\ast}],\gamma\rangle$ satisfied for  the chosen hermitian inner product on $\mathfrak{sl}(2,\C)$ we may rewrite 
\begin{eqnarray*}
\lefteqn{  \langle\mu,2it^2[\Phi\wedge[\Phi^{\ast}\wedge\ast\mu]]-2it^2[\Phi^{\ast}\wedge[\Phi\wedge\ast\mu]]}\\
&&+\frac{t}{2}[\Phi^{\ast}\wedge\bar\partial_{A}^{\ast}\sigma]+\frac{t}{2}[\Phi\wedge\partial_{A}^{\ast}\sigma^{\ast}]\rangle \\
&=&2t^2|[\ast\mu\wedge \Phi^{\ast}]|^2+2t^2|[\ast\mu\wedge \Phi]|^2\\
&&+\frac{it}{2}\langle [\ast\mu\wedge\Phi],\bar\partial_{A}^{\ast}\sigma\rangle-\frac{it}{2}\langle[\ast\mu\wedge\Phi^{\ast}],\partial_{A}^{\ast}\sigma^{\ast}\rangle\\
&=&4t^2|[\ast\mu\wedge \Phi ]|^2 \\
&&+\frac{it}{2}\langle [\ast\mu\wedge\Phi],\bar\partial_{A}^{\ast}\sigma\rangle-\frac{it}{2}\langle[\ast\mu\wedge\Phi^{\ast}],\partial_{A}^{\ast}\sigma^{\ast}\rangle
\end{eqnarray*}
and
\begin{equation*}
\frac{3}{2}\langle\sigma,-t[\Phi\wedge\partial_{A}^{\ast}\mu]+\frac{i}{2}t^2[\Phi\wedge[\Phi^{\ast}\wedge\ast\sigma]]\rangle=\frac{3}{4}t^2|[\ast\sigma\wedge\Phi^{\ast}]|^2+\frac{3}{2}it\langle[\ast\sigma\wedge\Phi^{\ast}],\partial_{A}^{\ast}\mu\rangle.
\end{equation*}
An application of  the Cauchy-Schwarz inequality now
yields
\begin{multline*}
\frac{t}{2}\left|\langle [\ast\mu\wedge\Phi],\bar\partial_{A}^{\ast}\sigma\rangle\right|+\frac{t}{2}\left|\langle[\ast\mu\wedge\Phi^{\ast}],\partial_{A}^{\ast}\sigma^{\ast}\rangle\right|\\
\leq t|  [\ast\mu\wedge\Phi] | \cdot|  \bar\partial_{A}^{\ast}\sigma|\leq \frac{t^2}{2}  |  [\ast\mu\wedge\Phi]|^2+  \frac{1}{2} |  \bar\partial_{A}^{\ast}\sigma |^2
\end{multline*}
and (for any $\varepsilon>0$)
\begin{equation*}
\frac{3}{2}t\left|\langle[\ast\sigma\wedge\Phi^{\ast}],\partial_{A}^{\ast}\mu\rangle\right|\leq \frac{3}{4}\varepsilon^2t^2|[\ast\sigma\wedge\Phi^{\ast}]  |^2 +    \frac{3}{4}\varepsilon^{-2} | \partial_{A}^{\ast}\mu   |^2.
\end{equation*}
Collecting all terms and noting that  $| d_A^{\ast}\mu |^2=| \partial_A^{\ast}\mu |^2+| \bar\partial _A^{\ast}\mu |^2$ we arrive at the inequality
\begin{eqnarray*}
\Delta u&\geq& \left| d_A^{\ast}\mu\right|^2+\frac{3}{2}\left| \bar\partial_A^{\ast}\sigma\right|^2\\
&&+4t^2|[\ast\mu\wedge \Phi]|^2 - \frac{t^2}{2}  |  [\ast\mu\wedge\Phi]|^2  -  \frac{1}{2} |  \bar\partial_{A}^{\ast}\sigma |^2\\
&&+\frac{3}{4}t^2|[\ast\sigma\wedge\Phi^{\ast}]|^2  -    \frac{3}{4}\varepsilon^2t^2|[\ast\sigma\wedge\Phi^{\ast}]  |^2 -  \frac{3}{4}\varepsilon^{-2} | \partial_{A}^{\ast}\mu   |^2\\
&=&  (1-  \frac{3}{4}\varepsilon^{-2}) | \partial_{A}^{\ast}\mu   |^2+ | \bar\partial _A^{\ast}\mu |^2 +   \left| \bar\partial_A^{\ast}\sigma\right|^2\\
&& +\frac{7}{2} t^2|[\ast\mu\wedge \Phi]|^2 +\frac{3}{4}(1-\varepsilon^2)t^2|[\ast\sigma\wedge\Phi^{\ast}]|^2.  
\end{eqnarray*}
Upon choosing $\varepsilon^2=\frac{7}{8}$, the claimed inequality follows. 
\end{proof}

\begin{lemma}\label{lem:unifC0bounddisks}
For $t\geq1$, let $\xi_t$ be the solution of the equation $D_t^2\xi_t=\eta_t$, where $\supp \eta_t\Subset \Sigma^{\operatorname{ext}}$. We further assume the uniform bound    $\|\eta_t\|_{L^2(\Sigma)}\leq C_0$ for some constant $C_0$ and all $t$. Then there exists a constant $C$ such that $|\xi_t(x)| \leq C$ for all $x\in\bigcup_{p\in\mathfrak p}\D(p)$ and $t$. An equivalent statement holds with the operator $D_t^2$ replaced by $D_t^0$.
\end{lemma}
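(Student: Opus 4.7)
The plan is to combine the subharmonicity estimate of Lemma \ref{lem:subharmonicest} with the uniform $L^2$-to-$L^2$ bound on $G_t^2$ from Corollary \ref{cor:uniformupperbound}, using a mean value inequality for subharmonic functions to convert an integrated estimate into a pointwise one.

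First, because $\supp\eta_t\Subset\Sigma^{\operatorname{ext}}=\Sigma\setminus\bigcup_{p\in\mathfrak p}\D(p)$ uniformly in $t$, I would fix once and for all an open neighborhood $V$ of $\bigcup_{p\in\mathfrak p}\bar{\D}(p)$ which is disjoint from $\supp\eta_t$ for every $t\geq 1$. Writing $\xi_t=(\mu_t,\sigma_t)$, the equation $D_t^2\xi_t=\eta_t$ then restricts to $D_t^2(\mu_t,\sigma_t)=0$ on $V$, so Lemma \ref{lem:subharmonicest} applies and the function
\[
u_t:=\tfrac{1}{2}\bigl(|\mu_t|^2+\tfrac{3}{4}|\sigma_t|^2\bigr)
\]
is subharmonic on $V$.

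Next, compactness of $\bigcup_{p\in\mathfrak p}\bar{\D}(p)$ inside $V$ provides a radius $r_0>0$ such that for every $x\in\bigcup_{p\in\mathfrak p}\bar{\D}(p)$ the geodesic ball $B_{r_0}(x)$ lies in $V$. The classical mean value inequality for subharmonic functions on $(\Sigma,g)$ then gives constants $C_1,C_2>0$, depending only on the metric, $V$, and $r_0$ (hence not on $t$ or $x$), with
\[
u_t(x)\leq\frac{C_1}{\vol B_{r_0}(x)}\int_{B_{r_0}(x)}u_t\,d\vol_\Sigma\leq C_2\|\xi_t\|_{L^2(\Sigma)}^2.
\]
Combining this with $\|\xi_t\|_{L^2(\Sigma)}\leq\|G_t^2\|_{\mathcal L(L^2(\Sigma))}\|\eta_t\|_{L^2(\Sigma)}\leq CC_0$ from Corollary \ref{cor:uniformupperbound}, and using the pointwise inequality $|\xi_t|^2\leq\tfrac{8}{3}u_t$, yields the asserted uniform bound $|\xi_t(x)|\leq C'$ on $\bigcup_{p\in\mathfrak p}\D(p)$.

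The $D_t^0$ case is handled identically, replacing $u_t$ by $\tfrac{1}{2}|\gamma_t|^2$ and invoking the $D_t^0$-analogue of Lemma \ref{lem:subharmonicest} (mentioned in the excerpt as already proved in \cite{msww17}) together with the corresponding part of Corollary \ref{cor:uniformupperbound}. The only genuinely delicate point is the uniform choice of $V$ and $r_0$: if $\supp\eta_t$ were allowed to approach $\partial\D(p)$ as $t\to\infty$, the radius $r_0$ would shrink and the mean value constant $C_2$ would diverge, so this uniformity is essential, and is naturally guaranteed by the assumption $\supp\eta_t\Subset\Sigma^{\operatorname{ext}}$ in the applications to come.
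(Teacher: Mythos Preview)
Your proof is correct and follows essentially the same approach as the paper: combine the uniform $L^2$ bound from Corollary~\ref{cor:uniformupperbound} with the subharmonicity of $u_t$ from Lemma~\ref{lem:subharmonicest}, then upgrade to a pointwise bound via a mean value inequality. The only technical difference is that the paper uses Fubini to select, for each $t$, a circle $S_{r(t)}(p)$ with $1\leq r(t)\leq R$ on which the $L^2$ mass of $\xi_t$ is controlled, and then invokes the spherical sub-mean-value property, whereas you apply the solid (ball) sub-mean-value inequality directly; your version is slightly more streamlined and avoids the auxiliary radius $r(t)$.
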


\begin{proof}
With   $\|\eta_t\|_{L^2(\Sigma)}\leq C_0$, Corollary \ref{cor:uniformupperbound} yields the uniform estimate $\|\xi_t\|_{L^2(\Sigma)}\leq C$ for some constant $C$ and all $t\geq1$. 
We fix a number $R>1$ such that $\D_R(p)\cap \supp \eta_t=\emptyset$ for all $p\in\mathfrak p$.
By Fubini's theorem, there is a constant $C_1$ and  for every $t$  a  constant $1\leq r(t)\leq R$   such that (with $S_{r(t)}(p)$ denoting the boundary of $\D_{r(t)}(p)$)  
\begin{equation*}
\int_{S_{r(t)}(p)}|\xi_t|^2\, d\theta \leq C_1 \|\xi_t\|_{L^2(\Sigma)}^2\leq C_2
\end{equation*}
for some further $t$-independent constant $C_2$. Since we assumed that $\xi_t$ satisfies $D_t^2\xi_t=0$ on $\D_R(p)$ for all $p\in\mathfrak p$, Lemma \ref{lem:subharmonicest} yields subharmonicity of  the function $u_t=\frac{1}{2}(|\mu_t |^2+\frac{3}{4}| \sigma_t|^2)$  on each disk $\D_{r(t)}(p)$, where $\xi_t=(\mu_t,\sigma_t)$. The   mean value property of subharmonic functions now implies the result.
\end{proof}

\subsection{Local analysis of the model equation}\label{subsect:localanalysis} 

We next consider  the family of model equations $r^2D_t^j\xi_t=\eta_t$ over the complex plane $\C$. Here we suppose that the operator $D_t^j$ is induced by the fiducial solution $(A_t^{\fid},\Phi_t^{\fid})$ and that the right-hand side  is of the form $\eta_t(re^{i\theta})=\eta(\rho e^{i\theta})$ for some $t$-idependent function $\eta$, where $\rho=\frac{8}{3} tr^{\frac{3}{2}}$. We analyze this equation by means of a suitable decomposition of the Hilbert space $H=L^2(\C)$ into an orthogonal  sum of $D_t^j$-invariant subspaces. This decomposition  permits us to reduce the model equation to a system of ordinary differential equations. Details are carried out for the operator   $D_t^2$; the easier operator $D_t^0$ has been analyzed by similar methods in \cite{msww14}.\\ 
\medskip\\
Let us  define the Hilbert subspaces $H^+_{\ell}$ and $H^-_{\ell}$ of $H$ consisting of the pairs of square integrable two-forms 
\begin{multline}\label{eq:pairhellplus}
\begin{pmatrix}
\mu\\\sigma\end{pmatrix}=  \left(  \begin{pmatrix}\bar\mu_{\ell}e^{-i\ell\theta}+ \mu_{\ell}e^{i\ell\theta} &0\\  0&  -\bar\mu_{\ell}e^{-i\ell\theta}- \mu_{\ell}e^{i\ell\theta} \end{pmatrix}dz\wedge d\bar z,\right.\\
\left. \begin{pmatrix}0&  \sigma_{-\ell+2}e^{i(-\ell+2)\theta}+ \sigma_{\ell+2}e^{i(\ell+2)\theta} \\    \tau_{-\ell+1}e^{i(-\ell+1)\theta}+ \tau_{\ell+1}e^{i(\ell+1)\theta}&0 \end{pmatrix}dz\wedge d\bar z\right),
\end{multline}
respectively
\begin{multline}\label{eq:pairhellminus}
\begin{pmatrix}
\mu\\\sigma\end{pmatrix}=  \left(  \begin{pmatrix}0&\mu_{\ell}e^{i\ell\theta}+ \mu_{-\ell+1}e^{i(-\ell+1)\theta} \\   \bar\mu_{\ell}e^{-i\ell\theta}+ \bar\mu_{-\ell+1}e^{-i(-\ell+1)\theta}&0\end{pmatrix}dz\wedge d\bar z,\right. \\
\left.\begin{pmatrix}  \sigma_{\ell+1}e^{i(\ell+1)\theta}+ \sigma_{-\ell+2}e^{i(-\ell+2)\theta} &0 \\   0&  -\sigma_{\ell+1}e^{i(\ell+1)\theta}- \sigma_{-\ell+2}e^{i(-\ell+2)\theta} \end{pmatrix}dz\wedge d\bar z\right),
\end{multline}
where $\mu_j, \sigma_{j},\tau_j\colon[0,\infty)\to\C$  are   functions of the radial variable $r$.  It is straightforward to check that $D_t^2$ preserves the $L^2$ orthogonal decomposition
\begin{equation*}
H=\bigoplus_{\ell\geq0}H_{\ell}^+\oplus\bigoplus_{\ell\geq1}H_{\ell}^-.
\end{equation*}
For ease of notation, we identify the pair of two-forms in  \eqref{eq:pairhellplus} with the tuple of functions $(\mu_\ell,\sigma_{\ell+2},\sigma_{-\ell+2},\tau_{\ell+1},\tau_{-\ell+1})$, and similarly for the pair in \eqref{eq:pairhellminus}. We denote by $D_{t,\ell}^{\pm}$ the restriction of $D_t^2$ to  $H^{\pm}_{\ell}$.   It satisfies
\begin{multline}\label{eq:opDtj+}
D_{t,\ell}^+\begin{pmatrix}
\mu_\ell\\ \sigma_{\ell+2}\\\sigma_{-\ell+2} \\ \tau_{\ell+1}\\ \tau_{-\ell+1}
\end{pmatrix}=-\frac{1}{2r^2}(r\partial_r)^2\begin{pmatrix}
2\mu_\ell\\ \sigma_{\ell+2}\\\sigma_{-\ell+2} \\ \tau_{\ell+1}\\ \tau_{-\ell+1}
\end{pmatrix} \\
+\frac{1}{2r^2}\begin{pmatrix}
2  \ell^2\mu_\ell+64t^2r^3\cosh(2h_t)\mu_\ell\\
\big(\ell+2-4f_t\big)^2\sigma_{\ell+2} +4rf_t'\sigma_{\ell+2}+4t^2r^3e^{-2h_t}\sigma_{\ell+2}-4t^2r^3\tau_{\ell+1}\\
\big(-\ell+2-4f_t\big)^2\sigma_{-\ell+2} +4rf_t'\sigma_{-\ell+2}+4t^2r^3e^{-2h_t}\sigma_{-\ell+2}-4t^2r^3\tau_{-\ell+1}\\
\big(\ell+1+4f_t\big)^2\tau_{\ell+1} -4rf_t'\tau_{\ell+1}-4t^2r^3\sigma_{\ell+2}+4t^2r^3e^{2h_t}\tau_{\ell+1}\\
\big(-\ell+1+4f_t\big)^2\tau_{-\ell+1} -4rf_t'\tau_{-\ell+1}-4t^2r^3\sigma_{-\ell+2}+4t^2r^3e^{2h_t}\tau_{-\ell+1}
\end{pmatrix}\\
+\frac{tr^{\frac{1}{2}}}{2}\begin{pmatrix}
  e^{-h_t}\big\{\sigma_{\ell+2}'+\frac{\ell+2}{r}\sigma_{\ell+2}-\frac{4f_t}{r}\sigma_{\ell+2}+\sigma_{-\ell+2}'+\frac{-\ell+2}{r}\sigma_{-\ell+2} \\
 -\frac{4f_t}{r}\sigma_{-\ell+2}\big\} -   e^{h_t}\big\{\tau_{\ell+1}'+\frac{\ell+1}{r}\tau_{\ell+1}+\frac{4f_t}{r}\tau_{\ell+1} +\tau_{-\ell+1}'+\frac{-\ell+1}{r}\tau_{-\ell+1}+\frac{4f_t}{r}\tau_{-\ell+1}\big\}  \\ -4 e^{-h_t}(\mu_\ell'-\frac{\ell}{r}\mu_\ell) \\
-4 e^{-h_t}(\bar\mu_\ell'+\frac{\ell}{r}\bar\mu_\ell)\\
4 e^{h_t}( \mu_\ell'-\frac{\ell}{r}\mu_\ell)\\
4 e^{-h_t}(\bar\mu_\ell'+\frac{\ell}{r}\bar\mu_\ell)
\end{pmatrix}.
\end{multline}
The operator $D_{t,\ell}^+$ in this explicit form will be used below. A similar expression holds for $D_{t,\ell}^-$, which we do not need to write out here. We now turn  to the model equation $r^2D_t^2\xi_t=\eta_t$, which we analyze by making crucial use of the invariance of the operator $r^2D_t^2$ under the conformal rescaling $(r,\theta)\mapsto (\rho,\theta)=(\frac{8}{3}tr^{\frac{3}{2}},\theta)$.  Recall that we assume    the right-hand side to be of the form $\eta_t(re^{i\theta})=\eta(\rho e^{i\theta})$ for some $t$-idependent function $\eta$. Then under this rescaling, the model problem turns  into the equation
\begin{equation*}\label{eq:resclaedmodeq}
\rho^2D_{\ast}^2\xi=\eta
\end{equation*}
for some $t$-independent differential operator $D_{\ast}^2$ (which is not necessary to write out here explicitly). On each subspace $H_\ell^{\pm}$ this equation reduces to the $t$-independent system of second order ODEs
\begin{equation}\label{eq:order2system}
\rho^2D_{\ast,\ell}^{\pm}\xi_\ell^{\pm}=\eta_\ell^{\pm},
\end{equation}
where $\xi_\ell^{\pm}$ and $\eta_\ell^{\pm}$ denote the component of $\xi$, respectively of $\eta$, in $H_\ell^{\pm}$.  Introducing  $\rho\partial_{\rho}\xi_\ell^{\pm}$  as a new unknown function turns \eqref{eq:order2system} into a system of first order ODEs,    which we may write as  $\rho\hat D_{\ast,\ell}^{\pm}\xi_\ell^{\pm}=\eta_\ell^{\pm}$, or equivalently as
\begin{equation}\label{eq:FouriercompODE}
\hat D_{\ast,\ell}^{\pm}\xi_\ell^{\pm}=\frac{\eta_\ell^{\pm}}{\rho}.
\end{equation}
Let $\Phi_\ell^{\pm}$
be a fundamental system of solutions of the homogeneous equation $\hat D_{\ast,\ell}^{\pm}\xi_\ell^{\pm}=0$ on $\C$. By variation of constants,  a particular solution of Eq.\ \eqref{eq:FouriercompODE} is
\begin{equation}\label{eq:varofconstsolution}
\xi_\ell^{\pm}(\rho)= \Phi_\ell^{\pm}(\rho)\int_0^{\rho} (\Phi_\ell^{\pm})^{-1}(\lambda) \eta_\ell^{\pm}(\lambda)\, \frac{d\lambda}{\lambda}.
\end{equation}
Returning to our original PDE $r^2D_t^2\xi_t=\eta_t$, we have thus obtained a solution $\xi_t$ of the form $\xi_t(re^{i\theta})=\xi_{\ast}(\rho e^ {i\theta})$, where the component  of  $\xi_t$ in $H_\ell^{\pm}$  is   $\xi_{t,\ell}^{\pm}(r)=\xi_\ell^{\pm}(\rho)$. 
\medskip\\
Subsequently, we are concerned with the model equation  $r^2D_t^\ell\xi_t= \eta_t$ where the support of   the  function $\eta_t$ is contained in the disk of radius $Ct^{- \frac{2}{3}}$ about $0$, respectively  $\eta$ is supported in the  disk of $t$-independent radius $C$ about $0$. In this situation,  we may add an appropriate solution of the homogeneous equation and therefore  arrange for the solution in \eqref{eq:varofconstsolution}  to be in the domain  $\{\rho e^{i\theta}\in\C\mid \rho\geq C\}$ of the form
\begin{equation*}
\xi_\ell^{\pm}(\rho)=  B(\eta_\ell^{\pm})  \varphi_{\ell}^{\pm}(\rho),
\end{equation*}
where   $\varphi_{\ell}^{\pm}$ is any   solution of the homogeneous equation $D_{\ast,\ell}^{\pm}\xi_\ell^{\pm}=0$, and the map $\eta_\ell^{\pm}\mapsto  B(\eta_\ell^{\pm})\in\C$ is linear. We are thus lead to consider more closely this homogeneous equation in the region  $\rho\geq C$ for  sufficiently large constant $C$. We  show for each $\ell$ the existence of a  solution $\varphi_{\ell}^{\pm}$ with      at least polynomial decay rate in $\rho$. 
For  $\rho\geq C$ the pair $(A_t^{\fid},\Phi_t^{\fid})$ equals $(A_{\infty}^{\fid},\Phi_{\infty}^{\fid})$ up to some error which decays exponentially in $\rho$, so we may instead work with the operators $D_{\infty}^2$  induced by the latter.  Then (switching now back to the variable $r$) the above $L^2$ orthogonal decomposition of the Hilbert space $H$ into the invariant subspaces $H_\ell^{\pm}$ can  be  refined further.  Namely, we have the $L^2$ orthogonal splitting $H_\ell^{\pm}=H_\ell^{\pm+}\oplus H_\ell^{\pm-}$, where $H_\ell^{\pm+}$ is the subspace of  those maps in $H_\ell^{\pm}$ which commute with $\Phi_{\infty}^{\fid}$, and $H_\ell^{\pm-}$ is its orthogonal complement. To be specific, $H_\ell^{++}$ is spanned by the maps of the form,
\begin{multline*}
\begin{pmatrix}
\mu_\ell\\ \sigma_{\ell+2}\\\sigma_{-\ell+2} \\ \tau_{\ell+1}\\ \tau_{-\ell+1}
\end{pmatrix} =\begin{pmatrix}
0\\ \sigma_{\ell+2}\\\sigma_{-\ell+2} \\\sigma_{\ell+2}\\ \sigma_{-\ell+2}
\end{pmatrix}  \\
= \left(0, \begin{pmatrix}0&  \sigma_{-\ell+2}e^{i(-\ell+2)\theta}+ \sigma_{\ell+2}e^{i(\ell+2)\theta} \\    \sigma_{-\ell+2}e^{i(-\ell+1)\theta}+ \sigma_{\ell+2}e^{i(\ell+1)\theta}&0 \end{pmatrix}dz\wedge d\bar z \right),
\end{multline*}
(which can be further decomposed into the two subspaces spanned by $\sigma_{\ell+2}$, respectively $\sigma_{-\ell+2}$),
and similar expressions can be derived for the three other subspaces. From \eqref{eq:opDtj+} we  read off that  $D_{\infty}^2$ acts on the subspace spanned by $(0, \sigma_{\ell+2},0,\sigma_{\ell+2},0)$ as
\begin{equation*}
D_{\infty,\ell}^+ \sigma_{\ell+2}=  -\frac{1}{2r^2}(r\partial_r)^2 \sigma_{\ell+2}+\frac{1}{2r^2} (\ell+\frac{3}{2})^2\sigma_{\ell+2}, 
\end{equation*}
respectively on that spanned by $(0, 0,\sigma_{-\ell+2},0,\sigma_{-\ell+2})$ as
\begin{equation*}
D_{\infty,\ell}^+ \sigma_{-\ell+2}=  -\frac{1}{2r^2}(r\partial_r)^2 \sigma_{-\ell+2}+\frac{1}{2r^2} (-\ell+\frac{3}{2})^2\sigma_{\ell+2}.
\end{equation*}
Therefore, the homogeneous equation  $D_{\infty,\ell}^+ \sigma_{\ell+2}=0$  admits the polynomially decaying solution $ \sigma_{\ell+2}=r^{-\ell-\frac{3}{2}}$ ($\ell\geq0$). Similarly, $D_{\infty,\ell}^+ \sigma_{-\ell+2}=0$  admits the  polynomially decaying solutions $\sigma_{2}=r^{-\frac{3}{2}}$,  $\sigma_{0}=\sigma_{1}=r^{-\frac{1}{2}}$, and  $\sigma_{-\ell+2}=r^{-\ell+\frac{3}{2}}$ ($\ell\geq3$). It is easily checked that the bounded solutions to the homogeneous equation contained in $H_\ell^{-+}$ likewise   decay to $0$ at rate $r^{-\frac{1}{2}}$ or faster. In contrast, bounded solutions  to the homogeneous equation contained in  $H_\ell^{\pm-}$ decay at an exponential rate to $0$. To see that, we  appeal to Lemma \ref{lem:subharmonicest}. With the pointwise inequality
\begin{equation*}
|[\ast\mu\wedge \Phi_{\infty}]|^2 + |[\ast\sigma\wedge\Phi_{\infty}^{\ast}]|^2 \geq C(|\mu|^2+|\sigma|^2)
\end{equation*}
being satisfied by all $(\mu,\sigma)\in  H_\ell^{\pm-}$ for some constant $C>0$, Eq.\ \eqref{eq:diffineqsubharm}  implies for the function $u=\frac{1}{2}(|\mu|^2+ \frac{3}{4}(|\sigma|^2)$   the differential inequality
\begin{equation*}
\Delta u\geq Cu.
\end{equation*}
From this inequality it is standard to conclude exponential decay of every bounded solution $(\mu,\sigma)$ to the homogeneous equation $D_{\infty,\ell}^{\pm}(\mu,\sigma) =0$,   the rate of decay   being independent of $\ell$.

\subsection{Estimates on the global solution}
   
We return to the discussion of the family of equations $D_t^j\xi_t=\eta_t$  on the surface $\Sigma$. Concerning the right-hand side $\eta_t$ we make the assumption that $\eta_t$ is supported in the disk $\D_{ct^{-\frac{2}{3}}}(p)$ about some fixed $p\in\mathfrak p$, and that it is  there of the form  $\eta_t(r,\theta)=\eta(\frac{8}{3}tr^{\frac{3}{2}})$ for some $t$-independent map $\eta$.

\begin{lemma}\label{lem:estmodelsolutions}
The unique solution $\xi_t$ of the equation $D_t^2\xi_t=\eta_t$ decomposes (not uniquely) as $\xi_t=u_t+r_t$ where  
\begin{itemize}
\item[(i)]
the map $u_t$ is supported in $\D(p)$ and $u_t(r,\theta)=t^{-\frac{4}{3}}u(ctr^{\frac{3}{2}})$ for some $t$-independent smooth map $u$;
\item[(ii)]
the map $r_t$ satisfies the estimates $\|r_t\|_{L^2(\Sigma)}\leq Ct^{-\frac{5}{3}}$ and 
\begin{equation*}
\sup_{x\in \D(p_1)}|r_t(x)|\leq Ct^{-\frac{5}{3}}
\end{equation*}
for some $t$-independent constant $C$ and every $p_1\in\mathfrak p$.
\end{itemize}
An analogue statement holds with $D_t^2$ replaced by the operator $D_t^0$.
\end{lemma}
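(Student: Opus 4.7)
The plan is to construct $u_t$ as a cut-off, conformally rescaled version of the $t$-independent model solution on $\mathbb{C}$, and then to bound $r_t = \xi_t - u_t$ using the uniform Green-operator estimate (Corollary~\ref{cor:uniformupperbound}) together with the subharmonic mean-value principle (Lemma~\ref{lem:subharmonicest}). In the rescaled radial variable $\rho = \tfrac{8}{3}tr^{3/2}$, the hypothesis $\eta_t(r,\theta) = \eta(\rho e^{i\theta})$ makes the right-hand side $t$-independent, and by the conformal scaling computed in Section~\ref{subsect:localanalysis}, any function $u_t(r,\theta) := t^{-4/3} u(\rho(r) e^{i\theta})$ satisfies $D_t^2 u_t = \tilde D_*^2 u$ pointwise, where $\tilde D_*^2$ is a $t$-independent elliptic operator on $\mathbb{C}$. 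I would solve the model equation $\tilde D_*^2 \tilde u = \eta$ on $\mathbb{C}$ using the Fourier decomposition into the invariant subspaces $H_\ell^{\pm}$ and the variation-of-constants formula~\eqref{eq:varofconstsolution}, selecting the unique $\tilde u$ which decays at infinity; by the analysis at the end of Section~\ref{subsect:localanalysis}, this decay is at worst polynomial of rate $\rho^{-1/2}$ on the $H_\ell^{\pm+}$-components and exponential on the $H_\ell^{\pm-}$-components.

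I then multiply by a $t$-independent smooth cutoff $\chi(\rho)$ which equals $1$ on the support of $\eta$ and vanishes outside $\{\rho \leq 2R\}$ for some large fixed $R$, and set $u := \chi \tilde u$ and $u_t(r,\theta) := t^{-4/3} u(\rho(r) e^{i\theta})$. Since $u_t$ is supported in a disk of radius $O(t^{-2/3})$ about $p$, it lies inside $\D(p)$ for all $t$ sufficiently large, establishing~(i). The residual $e_t := D_t^2 u_t - \eta_t$ reduces to $[\tilde D_*^2, \chi]\tilde u(\rho)$ and is supported on the cutoff annulus $\{R \leq \rho \leq 2R\}$---equivalently, in $r$-coordinates, an annulus of width $O(t^{-2/3})$ at radius $r \sim t^{-2/3}$. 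Combining the first-order character of the commutator, the polynomial decay of $\tilde u$ together with the cancellation afforded by the identity $\tilde D_*^2 \tilde u = 0$ outside the support of $\eta$, and the area scaling $r\,dr\,d\theta = c\,t^{-4/3}\rho^{1/3}\,d\rho\,d\theta$, one derives the key bound $\|e_t\|_{L^2(\Sigma)} \leq Ct^{-5/3}$; Corollary~\ref{cor:uniformupperbound} then gives $\|r_t\|_{L^2(\Sigma)} = \|G_t^2 e_t\|_{L^2(\Sigma)} \leq Ct^{-5/3}$, which is the first half of~(ii).

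For the pointwise bound, fix $p_1 \in \mathfrak p$ with $p_1 \neq p$. Then $\D(p_1)$ is disjoint from the supports of both $u_t$ and $\eta_t$, so $r_t = \xi_t$ and $D_t^2 r_t = 0$ on $\D(p_1)$. Writing $r_t = (\mu_t, \sigma_t)$, Lemma~\ref{lem:subharmonicest} shows that $\tfrac{1}{2}(|\mu_t|^2 + \tfrac{3}{4}|\sigma_t|^2)$ is subharmonic on $\D(p_1)$, so the mean-value property bounds its supremum by a constant multiple of $\|r_t\|_{L^2(\Sigma)}^2 \leq Ct^{-10/3}$; taking square roots yields $\sup_{\D(p_1)}|r_t| \leq Ct^{-5/3}$. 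The analogue for $D_t^0$ follows by the same argument, using the simpler Hilbert-space decomposition for $D_t^0$ recorded in~\cite{msww14}. The main obstacle I anticipate is the residual estimate $\|e_t\|_{L^2(\Sigma)} \leq Ct^{-5/3}$: the pointwise size of $[\tilde D_*^2, \chi]\tilde u$ on the cutoff annulus is subtle, requiring one to exploit the $\rho^{-4/3}$-coefficient structure of $\tilde D_*^2$ on the $H_\ell^{\pm+}$-subspaces (where the mass terms in $D_t^2$ cancel) rather than bounding each factor independently.
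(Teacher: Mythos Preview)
Your approach has a genuine gap in the residual estimate $\|e_t\|_{L^2(\Sigma)}\leq Ct^{-5/3}$, and the difficulty you flag at the end is in fact fatal for your choice of cutoff. You cut off at a \emph{$t$-independent} $\rho$-radius $R$. In the $\rho$-variable the error $e_t = c^{-1}\rho^{2/3}[D_*^2,\chi]\tilde u$ is then a fixed, $t$-independent function supported on $\{R\le\rho\le 2R\}$: the commutator coefficients, $\tilde u$, its derivative, and $\rho^{2/3}$ are all $O(1)$ there, with no $t$-dependence whatsoever. The only $t$ enters through the area form $r\,dr\,d\theta=ct^{-4/3}\rho^{1/3}\,d\rho\,d\theta$, giving $\|e_t\|_{L^2}^2=O(t^{-4/3})$, i.e.\ $\|e_t\|_{L^2}=O(t^{-2/3})$, a full power of $t$ short of what is needed. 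No cancellation in the coefficient structure of $\tilde D_*^2$ can rescue this, because $e_t(\rho,\theta)$ literally does not depend on $t$.

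The paper's proof places the cutoff at $\rho\sim Ct$, i.e.\ at a \emph{fixed} $r$-radius, using a $t$-dependent $\chi_t(\rho)$ with $|\chi_t'|\le Ct^{-1}$ and $|\chi_t''|\le Ct^{-2}$. On that annulus the model solution $w$ has already decayed to size $\rho^{-1/3}\sim t^{-1/3}$; combining this decay with the scaled cutoff derivatives and the prefactor $\rho^{2/3}\sim t^{2/3}$ yields the pointwise bound $|S_t|\le Ct^{-5/3}$, and since the support now has $O(1)$ area in $r$, the $L^2$ bound follows. This placement has a second payoff you are missing: the error $S_t$ is supported in $\Sigma^{\mathrm{ext}}$, so $D_t^2 r_t=0$ on \emph{every} disk $\D(p_1)$, including $p_1=p$, and Lemma~\ref{lem:unifC0bounddisks} applies directly. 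Your argument for the pointwise bound handles only $p_1\neq p$; with your cutoff the error sits inside $\D(p)$, so the subharmonicity argument does not cover that disk.
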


\begin{proof}
The function $u_t$ is constructed as an approximate solution to the equation $D_t^2\xi_t=\eta_t$ as follows. On $\C$ we consider the  equation   $r^2D_t^2w_t=r^2\eta_t$. By the  discussion in \textsection  \ref{subsect:localanalysis}, using that  $r^2=ct^{-\frac{4}{3}} \rho^{\frac{4}{3}}$ decays at rate $t^{-\frac{4}{3}}$, it admits a solution $w_t$ of the form $w_t(r)=t^{-\frac{4}{3}} w(\rho)$ for some $t$-independent function $w$. Furthermore, this function $w$  can be chosen to decay at least at the polynomial rate $\rho^{-\frac{1}{3}}$ on the interval $[C,\infty)$ (with $C$ sufficiently large but fixed). 
We choose a smooth cutoff function $\chi_t\colon[0,\infty]\to[0,\infty)$ such that $\chi(\rho)=1$ for $0\leq \rho\leq Ct$ and the support of $\chi_t$ is contained in $[0,2Ct]$, and that furthermore $t|\chi_t'(\rho)|+t^2|\chi_t''(\rho)|\leq C$ for all $\rho$. Then we define the smooth map $u_t$ on $\Sigma$ by
\begin{equation*}
u_t(r,\theta)=\chi_t(\rho)t^{-\frac{4}{3}}w(\rho)
\end{equation*}
for $(r,\theta)\in\D(p)$, and continue it by zero outside $\D(p)$. By construction, the function $u_t$ is an approximate solution to the equation  $D_t^2\xi_t=\eta_t$ on $\Sigma$, with error  $S_t:=D_t^2u_t-\eta_t$  supported on the annulus $A(p)=\D_{2}(p)\setminus \D(p)$ of inner radius $1$ and outer radius $2$ around $p$. The function $S_t$ can be estimated as follows. Writing $D_t^2$ as $D_t^2=t^{\frac{4}{3}}\rho^{\frac{2}{3}}\big(-\frac{1}{\rho^2}(\rho\partial_{\rho})^2+M\big)$, where $M$ is some operator which does not involve derivatives with respect to $r$,  it follows that
\begin{eqnarray*}
S_t&=& t^{-\frac{4}{3}}\chi_t D_t^2w-\eta_t\\
&&+ \rho^{\frac{2}{3}}\Big(  -\frac{1}{\rho^2}(\rho\partial_{\rho})^2\chi_t\cdot w-\frac{1}{\rho^2}\rho\partial_{\rho}\chi_t\cdot \rho\partial_{\rho}w  \Big)\\
&=& \rho^{\frac{2}{3}}\Big(-\frac{1}{\rho^2}(\rho\partial_{\rho})^2\chi_t\cdot w- \partial_{\rho}\chi_t\cdot  \partial_{\rho}w  \Big).
\end{eqnarray*}
By the properties of the cutoff function $\chi_t$ and with $w$ decaying as $\rho^{-\frac{1}{3}}$, the term in the bracket admits the pointwise bound
\begin{align*}
\left|\frac{1}{\rho^2}(\rho\partial_{\rho})^2\chi_t\cdot w+\frac{1}{\rho}\partial_{\rho}\chi_t\cdot  \rho\partial_{\rho}w   \right|\leq C( t^{-2}\rho^{-\frac{1}{3}}  +\frac{1}{\rho} t^{-1} |\rho\partial_{\rho}w | )\\\leq Ct^{-2}\cdot t^{-\frac{1}{3}},
\end{align*}
where in the last step we used that $\operatorname{supp}\chi_t'\subseteq[Ct, 2Ct]$. On the other hand, the prefactor $ \rho^{\frac{2}{3}}$ grows like $t^{\frac{2}{3}}$ so that $S_t$ satisfies the pointwise bound $|S_t|\leq Ct^{-\frac{5}{3}}$. We are thus left with the equation $D_t^2r_t=S_t$, where $S_t$ is supported on the annulus $A(p)$. By the uniform boundedness of the family of maps $G_t^2\colon L^2(\Sigma)\to L^2(\Sigma)$ (cf.\ Corollary \ref{cor:uniformupperbound}) the asserted $L^2$ bound on $r_t$ follows. Concerning the claimed pointwise estimate we use that $r_t=(\mu_t,\sigma_t)$  satisfies the equation $D_t^2r_t=0$ on $\D(p_1)$. Thanks to Lemma \ref{lem:subharmonicest} this implies the  subharmonicity of the function $u_t=\frac{1}{2}(|\mu_t|^2+\frac{3}{4}|\sigma_t|^2)$ on that disk. An application of Lemma \ref{lem:unifC0bounddisks} then yields the result.  
\end{proof}

Along the same line of argument, we  obtain uniform bounds for the gauge correction term resulting from the solution $\xi_t$ of Eq.\ \eqref{eq:gaugefixeq} with right-hand side $E_t$.

\begin{lemma}\label{lem:refinedestgaugecorr}
Let $\xi_t$ denote the solution of the equation $D_t^0\xi_t=  E_t$, where $E_t$ is   as in Proposition \ref{prop:errorest}. It decomposes (not uniquely) as $\xi_t=u_t+r_t$ where  
\begin{itemize}
\item[(i)]
the map $u_t$ is supported in $\D(p)$ and $u_t(r,\theta)=t^{-\frac{1}{3}}u(ctr^{\frac{3}{2}})$ for some $t$-independent smooth map $u$;
\item[(ii)]
the map $r_t$ satisfies the estimate
\begin{equation}\label{eq:expdecayest1}
\|r_t\|_{C^0(\Sigma)}\leq Ce^{-\beta t}
\end{equation}
for $t$-independent  constants $\beta,C>0$.
\end{itemize}
Furthermore, there is a function $v=v(\rho,\theta)$ such that the gauge correction term $i_t\xi_t=i_tu_t+i_tr_t$ satisfies $|i_tu_t(r,\theta) |=t^{\frac{1}{3}} v(\frac{8}{3}tr^{\frac{3}{2}},\theta)$ for all $(r,\theta)\in\D(p)$  and 
\begin{equation}\label{eq:expdecayest2}
\|i_tr_t\|_{C^0(\Sigma)}\leq Ce^{-\beta t}.
\end{equation}
\end{lemma}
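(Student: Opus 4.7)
The plan is to follow the same pattern used for Lemma \ref{lem:estmodelsolutions}: build a local model solution $u_t$ from the invariant-subspace analysis of \textsection\ref{subsect:localanalysis} on each disk $\D(p)$, then control the global remainder $r_t$ via the uniform operator bound of Corollary \ref{cor:uniformupperbound} combined with a subharmonicity argument. Two new features must be handled in comparison with Lemma \ref{lem:estmodelsolutions}: first, the right-hand side $E_t$ is not compactly supported in a single disk but only exponentially small on $\Sigma^{\exterior}$; second, the desired prefactor in the ansatz is $t^{-1/3}$ rather than $t^{-4/3}$, reflecting the fact that $\|E_t\|_{C^0}\leq Ct$ grows rather than being $\mathcal{O}(1)$.

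The first concrete step is to exploit the explicit structure of $E_t$ recalled after Proposition \ref{prop:errorest}: on each disk $\D(p)$, $E_t = m_t\begin{pmatrix}1&0\\0&-1\end{pmatrix}$. Unwinding the definitions of $\alpha_t$ and $\varphi_t$ in \eqref{eq:correctedhortangalpha}, \eqref{eq:correctedhortangentphi} together with the $t$-dependence of $f_t$ and $h_t$ from Lemma \ref{f_t-h_t-function}, one verifies that $m_t(r,\theta) = t\,\tilde m(\rho,\theta)$ for a smooth $t$-independent function $\tilde m$ of the rescaled variable $\rho = \tfrac{8}{3} tr^{3/2}$. The model equation $r^2 D_t^0 u_t = r^2 E_t$ becomes, after the conformal rescaling, a $t$-independent equation of the form $\rho^2 D_*^0 u = c\,\rho^{4/3}\tilde m$; the factor $r^2 \sim t^{-4/3}\rho^{4/3}$ combined with the $t$ contained in $m_t$ produces precisely the prefactor $t^{-1/3}$. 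Variation of constants, applied to the $D_*^0$-analogue of the fundamental system \eqref{eq:varofconstsolution}, then yields a solution of the form $u_t(r,\theta) = t^{-1/3} u(\rho,\theta)$. The essential new observation is that $\begin{pmatrix}1&0\\0&-1\end{pmatrix}$ is pointwise orthogonal to $\Phi_\infty^{\fid}$ on $\D(p)\setminus\{0\}$, so that $E_t$ lies in the ``perpendicular'' subspace corresponding to $H_\ell^{\pm-}$ of \textsection\ref{subsect:localanalysis}. By the subharmonicity argument presented there, every bounded homogeneous solution on this subspace decays exponentially in $\rho$ at a rate uniform in $\ell$; consequently $u$ may be arranged to decay exponentially as well.

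Next I would cut off the local model at the appropriate scale, setting $u_t(r,\theta) := \chi_t(\rho)\,t^{-1/3}u(\rho,\theta)$ extended by zero outside $\D(p)$, where $\chi_t$ is a smooth cutoff identically $1$ on $\rho \leq Ct$ and supported in $\rho \leq 2Ct$. Because $u$ decays exponentially, the global error $S_t := D_t^0 u_t - E_t$ is pointwise $\mathcal{O}(e^{-\beta t})$: the interior cutoff error is controlled by the exponential decay of $u$ at $\rho \sim t$, while the remainder $S_t|_{\Sigma^{\exterior}}$ is bounded directly by the already-recorded estimate $E_t = \mathcal{O}(e^{-\beta t})$ on $\Sigma^{\exterior}$. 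Defining $r_t := \xi_t - u_t$ gives $D_t^0 r_t = -S_t$, and Corollary \ref{cor:uniformupperbound} yields $\|r_t\|_{L^2(\Sigma)} \leq C e^{-\beta t}$. To upgrade this to the pointwise bound \eqref{eq:expdecayest1}, I would invoke the $D_t^0$-analogue of Lemma \ref{lem:subharmonicest} together with the mean-value argument from Lemma \ref{lem:unifC0bounddisks}, in the easy extension that permits a pointwise $\mathcal{O}(e^{-\beta t})$ right-hand side rather than a homogeneous equation.

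Finally, for the gauge correction $i_t\xi_t = i_t u_t + i_t r_t$ with $i_t\gamma = (d_{A_t^{\appr}}\gamma,\,[t\Phi_t^{\appr}\wedge\gamma])$: direct differentiation under $\rho = \tfrac{8}{3}tr^{3/2}$ shows that $\partial_r u_t$ picks up the factor $d\rho/dr \sim t^{2/3}\rho^{1/3}$, so that $d_A u_t$ equals $t^{1/3}$ times a $t$-independent function of $(\rho,\theta)$; the same scaling follows for $[t\Phi_t^{\appr}\wedge u_t]$ from $|\Phi_t^{\appr}|\sim r^{1/2}\sim t^{-1/3}\rho^{1/3}$. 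This gives the claimed form $|i_t u_t(r,\theta)| = t^{1/3}v(\rho,\theta)$. For the remainder, the zeroth-order piece $[t\Phi_t^{\appr}\wedge r_t]$ is controlled by $Ct\|r_t\|_{C^0} \leq Cte^{-\beta t}$, which after an arbitrarily small reduction of $\beta$ satisfies \eqref{eq:expdecayest2}; the derivative term $d_A r_t$ requires a $t$-uniform interior $C^1$ estimate, obtainable either by differentiating $D_t^0 r_t = -S_t$ and rerunning the subharmonicity argument on $|d_A r_t|^2$, or by a rescaled interior Schauder estimate modelled on the rescaling step in the proof of Lemma \ref{lem:uniformlowerboundeigenvalue}. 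This last point is the main obstacle I expect: the coefficients of $D_t^0$ blow up near the punctures in a $t$-dependent way, so extracting a $t$-uniform $C^1$ bound on $r_t$ from only the pointwise $C^0$ bound on $D_t^0 r_t = -S_t$ is the delicate technical heart of the argument.
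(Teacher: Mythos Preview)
Your approach is essentially the same as the paper's: build the local model $u_t$ via the rescaled invariant-subspace analysis, observe that the diagonal structure of $E_t$ forces $u_t$ into the subspace where homogeneous solutions decay exponentially, cut off, and estimate the remainder. One small imprecision: the relevant fact is not that $\begin{pmatrix}1&0\\0&-1\end{pmatrix}$ is orthogonal to $\Phi_\infty^{\fid}$ but that it fails to commute with it, i.e.\ lies in $L_\infty^\perp$; the paper bypasses the subspace language here and simply computes $D_t^0$ on diagonal sections directly, obtaining $\Delta u + 8t^2 r\cosh(2h_t)u$ with a uniformly positive potential, from which exponential decay is immediate.

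The point you flag as the ``main obstacle''---a $t$-uniform $C^1$ bound on $r_t$ needed for \eqref{eq:expdecayest2}---is handled differently and more cheaply in the paper. Rather than a rescaled Schauder estimate or a subharmonicity argument on $|d_A r_t|^2$, the paper invokes from \cite{msww14} that $\|(D_t^0)^{-1}\|_{L^2\to H^2}$ grows at most polynomially in $t$. Since $S_t$ is exponentially small in $L^2$, this gives $\|r_t\|_{H^2(\Sigma)}\leq Ce^{-\beta t}$ (exponential beats polynomial), and a standard elliptic bootstrap then yields exponential decay in every $C^k$ norm, covering both \eqref{eq:expdecayest1} and \eqref{eq:expdecayest2} at once. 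Your proposed route would also work but is harder; the $H^2$ mapping bound is the shortcut you are missing.
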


\begin{proof}
Since    $  E_t(r)=t   E(\rho)$ for some $t$-independent function $  E$, we are in the setup of Lemma \ref{lem:estmodelsolutions} and therefore obtain   statement (i) by an analogue line of argument. Statement (ii) also follows immediately; the sharper exponential decay we are claiming here is due the fact that $E$ is  a section of the subbundle of diagonal endomorphisms for which the discussion at the end of \textsection \ref{subsect:localanalysis} yields exponentially decaying solutions. Indeed, since $E_t$ is diagonal, so is $u_t$. Now the action of the operator $D_t^0$ on diagonal endomorphisms is given by
\begin{equation*}
D_t^0\begin{pmatrix} u&0\\0&-u\end{pmatrix}=\begin{pmatrix} \Delta u+ 8t^2r\cosh(2h_t)u&0\\0&-\Delta u- 8t^2r\cosh(2h_t)u\end{pmatrix},
\end{equation*} 
which since $8t^2r\cosh(2h_t)\geq C>0$ is uniformly positive admits exponentially decaying solutions to the equation $D_t^0\xi_t=0$ on $\C$. From here we may proceed as in the proof of Lemma \ref{lem:estmodelsolutions} and let $r_t$ be   the solution of the equation  $D_t^0r_t=S_t$, where the function $S_t$ is defined as before, but is now exponentially decaying in $t$. It was shown in \cite{msww14} that the norm of the operator $(D_t^0)^{-1}\colon L^2(\Sigma)\to H^2(\Sigma)$ is growing at an at most polynomial rate in $t$, which implies that $\|r_t\|_{H^2(\Sigma)}$ is exponentially decaying in $t$. Now a standard bootstrap argument shows exponential decay with respect to any $C^k$ norm, implying the estimates \eqref{eq:expdecayest1} and \eqref{eq:expdecayest2}. Concerning the term $i_tu_t=(d_{A_t^{\fid}}u_t,t[\Phi_t^{\fid}\wedge u_t])$ we substitute $\rho=\frac{8}{3}tr^{\frac{3}{2}}$ and $r\partial_r=\frac{3}{2}\rho\partial_{\rho}$ to obtain that 
\begin{equation*}
d_{A_t^{\fid}}u_t =  t^{-\frac{1}{3}}\left(  ct^{\frac{2}{3}} \rho^{\frac{1}{3}}\partial_{\rho}u\, d\rho+  \partial_{\theta}u\, d\theta\right).
\end{equation*}
Since $|d\theta|= r^{-1}= ct^{\frac{2}{3}}\rho^{-\frac{2}{3}}$, it follows that $d_{A_t^{\fid}}u_t $ grows like $t^{\frac{1}{3}}$. Now the second term is of the form
\begin{equation*}
t[\Phi_t^{\fid}\wedge u_t] = [tr^{\frac{1}{2}}\psi,  t^{-\frac{1}{3}} u]\, dz
\end{equation*}
for some endomorphism $\psi=\psi(\rho,\theta)$, from which the estimate on $|i_tu_t(r,\theta)|$ follows. 
\end{proof}

\section{Proof of the main theorem}

To finally show the asserted estimates on the sectional curvatures of $G_{L^2}$ we fix a simple holomorphic quadratic differential $q$ and let $t\mapsto S_t^{\appr}\in \mathcal M^{\appr}$ denote as in \textsection \ref{subsect:hitchinsection} the approximation to the Hitchin section along the ray $t\mapsto t^2q$. We further specify a family $\{X_t,Y_t\}$ of tangent two-frames of $\mathcal M^{\appr}$ along the path  $t\mapsto S_t^{\appr}$   in the following way.  We fix a pair $\{\dot q_1,\dot q_2\}$ of   holomorphic quadratic differentials. For each value of the parameter $t$, we let  $\{X_t,Y_t\}$
be the tangent vector in Coulomb gauge at $S_t^{\appr}$ induced by $\{\dot q_1,\dot q_2\}$ in the sense of Definition \ref{def:inducedtangentvector}. Choosing  $\dot q_1,\dot q_2$ in such a way that the pair $X_{\infty}$, $Y_{\infty}$ is $L^2$ orthonormal yields  for sufficiently large $t$  an approximately $L^2$ orthonormal two-frame  $X_{t}$, $Y_{t}$.

\begin{proposition}
For $t$ sufficiently large there holds the estimate
\begin{equation*}
\left|1- \| X_t\|^2\|   Y_t\|^2+\langle   X_t,   Y_t\rangle^2\right|\leq Ct^{-\frac{1}{3}}
\end{equation*}
for some constant $C$ which does not depend on $t$.
\end{proposition}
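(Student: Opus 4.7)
The identity to prove is the assertion that the Gram determinant $\|X_t\|^2\|Y_t\|^2 - \langle X_t,Y_t\rangle^2$ of the frame $\{X_t,Y_t\}$ converges to $1$ (the Gram determinant of the orthonormal pair $\{X_\infty,Y_\infty\}$) at rate $t^{-1/3}$. The plan is to deduce this directly from the $L^2$ convergence statement already established in Lemma \ref{thm:convhorizontal}, which tells us that
\[
\|X_t - X_\infty\|_{L^2(\Sigma)} \leq Ct^{-1/3}, \qquad \|Y_t - Y_\infty\|_{L^2(\Sigma)} \leq Ct^{-1/3},
\]
where $X_\infty = (0,\varphi_\infty^{(1)})$ and $Y_\infty = (0,\varphi_\infty^{(2)})$ are the $L^2$ limits induced by $\dot q_1,\dot q_2$ respectively, and are assumed to be $L^2$ orthonormal by the normalization of the pair $(\dot q_1,\dot q_2)$.

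First, I would write $X_t = X_\infty + R_t^X$ and $Y_t = Y_\infty + R_t^Y$ with $\|R_t^X\|_{L^2}, \|R_t^Y\|_{L^2} \leq Ct^{-1/3}$. Expanding the relevant inner products gives
\[
\|X_t\|^2 = 1 + 2\langle X_\infty, R_t^X\rangle + \|R_t^X\|^2, \qquad \|Y_t\|^2 = 1 + 2\langle Y_\infty, R_t^Y\rangle + \|R_t^Y\|^2,
\]
and
\[
\langle X_t, Y_t\rangle = \langle X_\infty, R_t^Y\rangle + \langle R_t^X, Y_\infty\rangle + \langle R_t^X, R_t^Y\rangle,
\]
using $\langle X_\infty, Y_\infty\rangle = 0$. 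By Cauchy--Schwarz each cross term involving $R_t^X$ or $R_t^Y$ is of order $t^{-1/3}$, and each quadratic remainder term is of order $t^{-2/3}$. Consequently
\[
\|X_t\|^2 \|Y_t\|^2 = 1 + \mathcal O(t^{-1/3}), \qquad \langle X_t, Y_t\rangle^2 = \mathcal O(t^{-2/3}).
\]

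Combining these two estimates yields
\[
1 - \|X_t\|^2 \|Y_t\|^2 + \langle X_t, Y_t\rangle^2 = \mathcal O(t^{-1/3}),
\]
which is the claimed bound. No step is essentially difficult here since all heavy lifting has been done in Lemma \ref{thm:convhorizontal}; the only point requiring a small bit of care is that the $L^2$ orthonormality is assumed only at $t=\infty$, so the dominant error is the first-order cross term $2\langle X_\infty, R_t^X\rangle$ (and its $Y$-analogue) of size $t^{-1/3}$, which determines the overall rate.
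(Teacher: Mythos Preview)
Your proof is correct and follows essentially the same approach as the paper, which simply states that the inequality follows straightforwardly from estimate \eqref{eq:convhorizontal}. You have merely spelled out in detail the Cauchy--Schwarz expansion that the paper leaves implicit.
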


\begin{proof}
The stated inequality follows straightforwardly from estimate \eqref{eq:convhorizontal}.
\end{proof}

We introduce the notation  $X_t=(t^{-1}\alpha_{1,t},\varphi_{1,t})$ and $Y_t=(t^{-1}\alpha_{2,t},\varphi_{2,t})$. The purpose of the next proposition is to derive explicit expressions for these tangent vectors on each disk $\D(p)$, where $p\in\mathfrak p$. These are similar to \eqref{eq:correctedhortangentphi} and \eqref{eq:correctedhortangalpha}, however the effect of the final gauge correction step in \textsection \ref{subsect:apprtangentspaces} has to be taken into account. On $\D(p)$, write $\dot q_j=\dot f_j\, dz^2$ for a holomorphic function $\dot f_j$, $j=1,2$.

\begin{proposition}\label{prop:structuretangentvect}
The family $X_t=(t^{-1}\alpha_{1,t},\varphi_{1,t})$ (and similarly $Y_t=(t^{-1}\alpha_{2,t},\varphi_{2,t})$)   has the following properties.
\begin{compactitem}[(i)]
\item 
There is a constant $\beta>0$ such that  on each disk $\D(p)$, with $\alpha_{1,t}=\alpha_{1,t}^{0,1}\,d\bar z-(\alpha_{1,t}^{0,1})^{\ast}\,dz$,
\begin{equation*}
\alpha_{1,t}^{0,1}=(R_t^1 \dot f+ R_t^2 \bar{\dot f}) \begin{pmatrix}1&0\\0&-1\end{pmatrix}+\mathcal O(e^{-\beta t})
\end{equation*}
and
\begin{equation*}
\varphi_{1,t}=\begin{pmatrix}0&S_t\dot f\\
T_t\frac{\dot f}{f}&0\end{pmatrix}dz+\mathcal O(e^{-\beta t}).
\end{equation*}
Here 
$R_t^1$, $R_t^2$, $S_t$ and $T_t$ are functions of the radial variable $r$ of the form $R_t^1(r)=R^1(\frac{8}{3}tr^{\frac{3}{2}})$ etc.
\item[(ii)]
There is a constant $C$ such that
\begin{equation*}
|t^{-1}\alpha_{1,t}(x)|+| \varphi_{1,t}(x)|\leq Ct^{\frac{1}{3}},
\end{equation*}
 for all $t\geq1$ and $x\in \D_{ct^{-\frac{2}{3}}}(p)$, and $|t^{-1}\alpha_{1,t}|+| \varphi_{1,t} -\varphi_{1,\infty}|\to0$    at an exponential rate in $t$ outside the union of disks   $\D_{ct^{-\frac{2}{3}}}(p)$, $p\in\mathfrak p$.
\end{compactitem} 
\end{proposition}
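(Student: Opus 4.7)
The plan is to derive the expressions in (i) by combining the explicit formulas \eqref{eq:correctedhortangalpha}--\eqref{eq:correctedhortangentphi} for the partially corrected tangent vector $(t^{-1}\alpha_t,\varphi_t)$ with the fine description of the final Coulomb gauge correction term $i_t\xi_t$ supplied by Lemma \ref{lem:refinedestgaugecorr}. Writing $X_t=(t^{-1}\alpha_t-d_{A_t^{\appr}}\xi_t,\,\varphi_t-t[\Phi_t^{\appr}\wedge\xi_t])$, the two pieces must each be put into self-similar form in $\rho=\tfrac{8}{3}tr^{3/2}$, with the non-self-similar parts absorbed into the $\mathcal O(e^{-\beta t})$ remainder. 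The pointwise bounds in (ii) then follow by inspection of the resulting profiles.

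First, on each disk $\D(p)$ we have $f=-z$, so $|f|=r$ and $\dot q=\dot f\,dz^2$ with $\dot f$ holomorphic. Under the substitution $\rho=\tfrac{8}{3}tr^{3/2}$ the functions $h_t(r)$, $rh_t'(r)$, $f_t(r)$ become the $t$-independent profiles $\psi(\rho)$, $\tfrac{3}{2}\rho\psi'(\rho)$, $\tfrac{1}{8}+\tfrac{3}{8}\rho\psi'(\rho)$. Plugging these into \eqref{eq:correctedhortangentphi} puts $\varphi_t$ exactly into the matrix form claimed for $\varphi_{1,t}$, with coefficients depending on $r$ only through $\rho$; analogously \eqref{eq:correctedhortangalpha} displays $t^{-1}\alpha_t^{0,1}$ as a scalar profile in $\rho$ multiplying $\dot f$ (with the $(1,0)$-part of $\alpha_t$ contributing the $\bar{\dot f}$-dependence visible in the statement through hermitian conjugation).

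Second, Lemma \ref{lem:refinedestgaugecorr} decomposes $\xi_t=u_t+r_t$, where $u_t(r,\theta)=t^{-1/3}u(\rho,\theta)$ is supported in $\D(p)$ and takes values in diagonal skew-hermitian endomorphisms (since on each disk $E_t$ is a scalar multiple of $\mathrm{diag}(1,-1)$, as recorded after Proposition \ref{prop:errorest}), while $i_tr_t$ is pointwise $\mathcal O(e^{-\beta t})$. A direct computation of $i_tu_t=(d_{A_t^{\appr}}u_t,\,t[\Phi_t^{\appr}\wedge u_t])$ then shows that each component is again in self-similar form in $\rho$; in particular the off-diagonal shape of $\Phi_t^{\appr}$ forces $t[\Phi_t^{\appr}\wedge u_t]$ to contribute only to the $(1,2)$ and $(2,1)$ entries in $\varphi_{1,t}$. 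Absorbing these self-similar contributions into the profiles $R_t^j,\,S_t,\,T_t$, and collecting into the $\mathcal O(e^{-\beta t})$ remainder both $i_tr_t$ and the exponentially small corrections $e^{\pm h_t}-1$, $1-8f_t$ supplied by Lemma \ref{f_t-h_t-function}, yields (i).

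For (ii), on $\D_{ct^{-2/3}}(p)$ the rescaled variable $\rho$ stays uniformly bounded, so every profile is bounded; the pointwise $t^{1/3}$ estimate then comes solely from the explicit prefactor $r^{-1/2}\sim t^{1/3}$ appearing in $\varphi_t$ and from the $t^{1/3}$ scaling of $d_{A_t^{\appr}}u_t$ established at the end of the proof of Lemma \ref{lem:refinedestgaugecorr}. Outside $\bigcup_{p\in\mathfrak p}\D_{ct^{-2/3}}(p)$ the asymptotic properties of $h_t$ and $f_t$ in Lemma \ref{f_t-h_t-function}(ii),(iv) make $(t^{-1}\alpha_t,\varphi_t)-(0,\varphi_\infty)$ exponentially small pointwise, and those same bounds together with the mean-value argument of Lemma \ref{lem:unifC0bounddisks} and the exponential smallness of $E_t$ on $\Sigma^{\operatorname{ext}}$ control $i_t\xi_t$ exponentially there. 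The main obstacle is the bookkeeping in the second step: one has to verify that the combination $\varphi_t-t[\Phi_t^{\appr}\wedge u_t]$ really does admit a representation with coefficients of the form $S_t\dot f$ and $T_t\dot f/f$ modulo exponentially small error, i.e.\ that any $\bar{\dot f}$-contributions produced by the real-linearity of $u_t$ in $(\dot f,\bar{\dot f})$ either align with the explicit $\dot f$-terms of $\varphi_t$ through the factor $f$ inside $\Phi_t^{\appr}$, or fall into the $\mathcal O(e^{-\beta t})$ remainder.
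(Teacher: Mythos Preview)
Your proposal is correct and follows essentially the same approach as the paper's own proof, which consists of just two sentences: verify the asserted properties directly for the explicit maps $\alpha_t$, $\varphi_t$ from \eqref{eq:correctedhortangentphi}--\eqref{eq:correctedhortangalpha}, then invoke Lemma~\ref{lem:refinedestgaugecorr} to control the final gauge correction. Your write-up supplies considerably more detail than the paper does and honestly flags the bookkeeping question about possible $\bar{\dot f}$-contributions to $\varphi_{1,t}$ from the real-linear dependence of $u_t$ on $\dot f$; the paper does not address this point explicitly either, and in the subsequent proof of Theorem~\ref{thm:mainthm} only the self-similar scaling in $\rho$ and the pointwise bounds of part~(ii) are actually used.
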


\begin{proof}
Direct calculation shows that the asserted properties are satisfied by  the maps  $\varphi_t$ as in \eqref{eq:correctedhortangentphi} and $\alpha_t$ as in \eqref{eq:correctedhortangalpha}. They continue to hold true for  $\alpha_{1,t}$ and $\varphi_{1,t}$ as follows from the estimates on the gauge correction term stated in Lemma \ref{lem:refinedestgaugecorr}.
\end{proof}

After these preparations we turn to the proof of the main result.

\begin{proof}{\bf{(Proof of Theorem \ref{thm:mainthm}).}}
For ease of notation we abbreviate $(X,Y)=(X_t,Y_t)$ and $G^j=G_t^j$. We consider the numerator and denominator in
\begin{equation}\label{eq:sectcurvaturedef}
K(\Pi(X,Y))=\frac{\langle R(X,Y)Y,X\rangle}{ \| X\|^2\|   Y\|^2-\langle   X,   Y\rangle^2}
\end{equation}
separately.
As for the numerator, we have from \eqref{eq:sectionalcurvature} the expression 
\begin{multline*}
\langle R(X,Y)Y,X\rangle= 3\langle G^0P_X^{\ast}Y,P_X^{\ast}Y\rangle_{\mathfrak g}\\
+\langle G^2Q_XX,Q_YY\rangle_{V}-\langle G^2Q_XY,Q_XY\rangle_{V}.
\end{multline*}
It suffices to show that each of the summands on the right-hand side is of the asserted form. We prove this for $\langle G^2Q_XX,Q_YY\rangle_{V}$, the two other cases being similar. We split
\begin{multline*}
\langle G^2Q_XX,Q_YY\rangle_{V}= \int_{\Sigma^{\operatorname{ext}}} \langle G^2Q_XX,Q_YY\rangle\operatorname{vol}_{\Sigma}\\
+\sum_{p\in\mathfrak p}\int_{\D(p)}\langle G^2Q_XX,Q_YY\rangle \operatorname{vol}_{\Sigma}=: I+\sum_{p\in\mathfrak p}II_p.
\end{multline*}
Concerning the term $I$, it follows from Proposition \ref{prop:structuretangentvect} that   $Q_YY$ is exponentially decaying as $t\to\infty$ on $\Sigma^{\operatorname{ext}}$, while  $ G^2Q_XX$ is polynomially bounded in $t$  by Corollary \ref{cor:uniformupperbound}. Hence $|I|\to0$ at an exponential rate as $t\to\infty$. We now fix  $p\in\mathfrak p$ and consider the first component $G^2Q_XX$ of term $II_p$. Using a partition of unity subordinate to the open covering of $\Sigma$ provided by the sets $\D(p')$, $p'\in\mathfrak p$ and $\Sigma^{\operatorname{ext}}$, we can write
\begin{equation*}
G^2Q_XX=G^2(\chi_{\operatorname{ext}} Q_XX)+G^2(\chi_p Q_XX) + \sum_{p'\neq p} G^2(\chi_{p'} Q_XX). 
\end{equation*}
Again, $\chi_{\operatorname{ext}} Q_XX$ is exponentially decaying in $t$ and can therefore disregarded in what follows. Furthermore,  Proposition \ref{prop:structuretangentvect} shows that $|\chi_{p'} Q_XX|$ grows at rate $t^{\frac{2}{3}}$  on $\D(p')$ and has the scaling properties as asserted in Lemma \ref{lem:estmodelsolutions}. Hence $G^2(\chi_{p'} Q_XX)=u_t+r_t$ with $u_t$ being supported in $\D(p')$ and $|r_t|\leq Ct^{-\frac{5}{3}}\cdot t^{\frac{2}{3}}$. Hence the pairing 
\begin{equation*}
\int_{\D(p)}\langle G^2(\chi_{p'} Q_XX),Q_YY\rangle\operatorname{vol}_{\Sigma}
\end{equation*}
decays to $0$ at rate $t^{-\frac{4}{3}}\cdot t^{-1}\cdot t^{\frac{2}{3}}=t^{-\frac{4}{3}}\cdot t^{-\frac{1}{3}}$, where the prefactor $t^{-\frac{4}{3}}$ comes from the area form $rdr\, d\theta$ and the fact that $|Q_YY|$ is exponentially decaying in the region where $r>ct^{-\frac{2}{3}}$. It remains to consider 
\begin{equation*}
\int_{\D(p)}\langle G^2(\chi_{p} Q_XX),Q_YY\rangle\operatorname{vol}_{\Sigma},
\end{equation*}
which we claim contributes the leading order term to $II_p$, and decays at rate $t^{-\frac{4}{3}}$. In fact, this follows again by applying  Lemma \ref{lem:estmodelsolutions}, which yields that $G^2(\chi_{p} Q_XX)=u_t+r_t$ with $|u_t|$ decaying at rate $t^{-\frac{2}{3}}$. Thus the  main contribution to $II_p$ is $\int_{\D(p)}\langle u_t,Q_YY\rangle\operatorname{vol}_{\Sigma}$. The integrand here admits a uniform pointwise bound in $t$  and is supported (up to an exponentially small term) in $\D_{ct^{-\frac{2}{3}}}(p)$, so that the overall rate of decay of the integral is $t^{-\frac{4}{3}}$. We finally consider  the Taylor expansion of $G^2(\chi_{p} Q_XX)$ around $p$. We notice that the constant term satisfies a pointwise bound by $Ct^{\frac{2}{3}}$, while all other terms decay at least like $t^{-\frac{2}{3}}$. It follows that the leading order term in $II_p$ (which we have seen decays as $t^{-\frac{4}{3}}$) is determined by the value of $G^2(\chi_{p} Q_XX)$ at $p$. This value  in turn is determined by the values of the holomorphic quadratic differentials $\dot f_1$ and $\dot f_2$ at $p$. As for the denominator in \eqref{eq:sectcurvaturedef}, the estimate of Proposition \ref{prop:structuretangentvect} shows that it expands into $1$ plus an error term which decays at rate $t^{-\frac{1}{3}}$ in $t$. Hence the leading order term of the quotient in \eqref{eq:sectcurvaturedef} is  of the asserted form. This completes the proof of the theorem.
\end{proof}

\end{document}